\documentclass{article}
\usepackage{amssymb}
\usepackage{color}
\newtheorem{theorem}{THEOREM}[section]
\newtheorem{proposition}{PROPOSITION}[section]
\newtheorem{lemma}{LEMMA}[section]

\newtheorem{remark}{REMARK}[section]

\newcommand{\grad}{\bigtriangledown}
\begin{document}
\title{Bifurcation of radial solutions for prescribed mean curvature equations}
\author{\Large N. B. Zographopoulos
}
\date{}
\maketitle
\pagestyle{myheadings} \thispagestyle{plain} \markboth{N. B.
Zographopoulos}{Global bifurcation for mean curvature equations}
\maketitle
\begin{abstract}
We prove global bifurcation results for prescribed mean curvature equations. These equations are defined on $\mathbb{R}^3$ and the radial solutions belonging in these branches are smooth and positive.
\end{abstract}
%
%
%
%
%
\section{Introduction}

In this work we prove the existence of a global bifurcation of radial solutions for the problems
\begin{equation} \label{mc}
- div \left ( \frac{\grad u}{\sqrt{1 + |\grad u|^2}}   \right ) = \lambda\, h(x)\, u,\;\;\;\;\;\; x \in \mathbb{R}^3,
\end{equation}
where, $h(x) = (1+|x|^2)^{-2}$. The branches of solutions bifurcate from the principal eigenvalue $\lambda_0$ of the corresponding linear equation
\begin{equation}\label{eq1.2}
-\Delta u = \lambda\, h(x)\, u,\;\;\; x \in \mathbb{R}^3.
\end{equation}
There is an extensive number of works studying such problems; we refer to the works \cite{b09, cc018, drt18, hprx18, p18, ry20} and the references therein. Equations of type (\ref{mc}) are non uniformly elliptic equations. Due to this, most variational methods fail. One of the main tools to study such problems is bifurcation theory, see \cite{cc018, drt18, mxh20, hprx18, nak90, ry20}. However, in the case where the equations are defined on the whole space, the only work, up to our knowledge, obtained (local) bifurcation results was \cite{sz02}. Our procedure here applies to the general class of equations (see Section \ref{secmc}):
\begin{equation} \label{mcgeneral}
- div \left ( \frac{\grad u}{f(x,u,\nabla u)}   \right ) = \lambda\, h(x)\, u + g(x,u,\nabla u),\;\;\;\;\;\; x \in \mathbb{R}^3.
\end{equation}
In order to state the main result of this paper we give the
following definition of what we mean with the existence of \emph{a
branch of solutions} of an operator equation (\cite{ra71}).
\begin{theorem} \label{rab}
Assume that $X$ is a Banach space with norm\ $||\cdot||$ and
consider\ $G(\lambda,\cdot)= L(\lambda,\cdot) + H(\lambda,\cdot)$,\
where\ $L$\ is a compact linear map on\ $X$\ and\
$H(\lambda,\cdot)$\ is compact and satisfies
\[
\lim_{||u|| \to 0} \frac{||H(\lambda,u)||}{||u||}=0.
\]
If\ $\lambda$\ is a simple eigenvalue of\ $L$\ then the closure of
the set
\[
C=\{ (\lambda ,u) \in \mathbb{R} \times X : (\lambda,u)\;\;
\mbox{solves}\;\; u=G(\lambda,u),\; u \not\equiv 0 \},
\]
possesses a maximal continuum (i.e. connected branch) of
solutions,\ $\mathcal{C}_\lambda$,\ such that\ $(\lambda,0) \in
\mathcal{C}_{\lambda}$\ and\ $\mathcal{C}_{\lambda}$\ either:

(i)\ meets infinity in\ $\mathbb{R} \times X$\ or,

(ii)\ meets\ $(\lambda^*,0)$,\ where\ $\lambda^* \ne \lambda$\ is
also an eigenvalue of\ $L$.
\end{theorem}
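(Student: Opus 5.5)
This is Rabinowitz's global bifurcation theorem, and I would prove it with Leray--Schauder degree. Here ``$\lambda$ is a simple eigenvalue of $L$'' is read in the characteristic-value sense: the fixed point equation is $u=\lambda Lu+H(\lambda,u)$, and $\mu=1/\lambda$ is an eigenvalue of $L$ with algebraic multiplicity one. Set $\Phi_\lambda(u)=u-\lambda Lu-H(\lambda,u)$, which is a compact perturbation of the identity.

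\textbf{Step 1: jump of the index.} For $\mu$ not a characteristic value, the linearization $I-\mu L$ is invertible. Since $H(\mu,u)=o(\|u\|)$, a homotopy $u-\mu Lu-tH(\mu,u)$, $t\in[0,1]$, has no zeros on small spheres $\|u\|=r$. So the local index satisfies
\[
i(\Phi_\mu,0)=\deg(I-\mu L,B_r,0)=(-1)^{\beta(\mu)},
\]
where $\beta(\mu)$ is the sum of the algebraic multiplicities of the characteristic values in $(0,\mu)$ (Leray--Schauder index formula). Because $\lambda$ is simple, $\beta$ jumps by exactly one across $\lambda$. Hence $i(\Phi_{\lambda-\varepsilon},0)=-i(\Phi_{\lambda+\varepsilon},0)$ for small $\varepsilon>0$, with no other characteristic values in $[\lambda-\varepsilon,\lambda+\varepsilon]$. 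A standard corollary is that $(\lambda,0)\in\overline{C}$.

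\textbf{Step 2: contradiction argument.} Let $\mathcal{C}_\lambda$ be the component of $\overline{C}\cup\{(\lambda,0)\}$ containing $(\lambda,0)$. Suppose (i) and (ii) both fail. Then $\mathcal{C}_\lambda$ is bounded. By compactness of $L$ and $H$, any bounded closed set of solutions is compact: a solution satisfies $u=\lambda Lu+H(\lambda,u)$, so from a bounded sequence one extracts a convergent subsequence. Also, the only points of $\mathcal{C}_\lambda$ on the trivial line $\mathbb{R}\times\{0\}$ have the form $(\mu,0)$ with $\mu$ a characteristic value, so the only such point is $(\lambda,0)$.

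\textbf{Step 3: isolating neighborhood.} I apply the Whyburn separation lemma in the compact metric space $K=(\overline{C}\cap \overline{\mathcal{O}})\cup([\lambda-\varepsilon,\lambda+\varepsilon]\times\{0\})$, where $\mathcal{O}$ is a $\delta$-neighborhood of $\mathcal{C}_\lambda$. This lemma gives disjoint compact sets $K_1\supset\mathcal{C}_\lambda$ and $K_2\supset\overline{C}\cap\partial\mathcal{O}$, with $K=K_1\cup K_2$. This yields a bounded open set $\mathcal{O}'\supset\mathcal{C}_\lambda$ with two properties:
\begin{itemize}
\item $\partial\mathcal{O}'$ contains no solutions with $u\neq0$;
\item $\mathcal{O}'$ meets the trivial line only in a thin segment around $(\lambda,0)$.
\end{itemize}
Obtaining this neighborhood, and making it compatible with small balls near $(\lambda\pm\varepsilon,0)$, is the main technical obstacle.

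\textbf{Step 4: homotopy invariance.} Let $\mathcal{O}'_\mu$ be the $\mu$-slice of $\mathcal{O}'$. The degree $d(\mu)=\deg(\Phi_\mu,\mathcal{O}'_\mu,0)$ is constant in $\mu$ by generalized homotopy invariance, since there are no zeros on $\partial\mathcal{O}'$. It vanishes for $|\mu|$ large, where the slice is empty. For $\mu$ away from $[\lambda-\varepsilon,\lambda+\varepsilon]$ the only zeros in the slice are nontrivial, so this is done with the slice itself, not with trivial zeros.

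The standard trick is to excise a small ball around $0$ and compare at $\lambda\pm\varepsilon$. This gives
\[
\deg(\Phi_{\lambda-\varepsilon},\mathcal{O}'_{\lambda-\varepsilon},0)-i(\Phi_{\lambda-\varepsilon},0)=\deg(\Phi_{\lambda+\varepsilon},\mathcal{O}'_{\lambda+\varepsilon},0)-i(\Phi_{\lambda+\varepsilon},0).
\]
The two sides are equal because the set with the trivial-solution neighborhood removed carries no zeros on its boundary as $\mu$ crosses $\lambda$. Constancy of the full degree gives equal first terms, so $i(\Phi_{\lambda-\varepsilon},0)=i(\Phi_{\lambda+\varepsilon},0)$. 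This contradicts the index jump of Step 1. Hence (i) or (ii) holds.
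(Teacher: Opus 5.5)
The paper gives no proof of this statement. It quotes Rabinowitz, and your Leray--Schauder outline is the classical proof. The genuine gap is in Step 4, exactly where the contradiction is produced.

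\textbf{The false claims in Step 4.} You justify the displayed identity by saying that the excised set $\{(\mu,u)\in\mathcal{O}' : |\mu-\lambda|\le\varepsilon,\ \|u\|>r\}$ has no zeros of $\Phi$ on its boundary as $\mu$ crosses $\lambda$. This is false. For every small $r>0$ there are zeros on $[\lambda-\varepsilon,\lambda+\varepsilon]\times\partial B_r$. Otherwise homotopy invariance on $B_r$ alone would give $i(\Phi_{\lambda-\varepsilon},0)=i(\Phi_{\lambda+\varepsilon},0)$. That contradicts your Step 1 without ever using that (i) and (ii) fail, so it cannot be the mechanism. As $r\to0$ these zeros accumulate only at $(\lambda,0)$, so they lie inside $\mathcal{O}'$, i.e.\ on the boundary of the excised set.

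The companion claim is also false: you say $d(\mu)$ is constant on all of $\mathbb{R}$ because $\partial\mathcal{O}'$ carries no zeros. But $\mathcal{O}'$ is bounded and contains $(\lambda,0)$, so the trivial line leaves $\mathcal{O}'$ through trivial zeros $(\mu,0)\in\partial\mathcal{O}'$. Your first bullet only excludes nontrivial zeros, and $d$ jumps at these exit points. Under the contradiction hypothesis one in fact has $d(\lambda\pm\varepsilon)=i(\Phi_{\lambda\pm\varepsilon},0)\neq 0$, not $0$.

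\textbf{The repair.} It uses your ingredients but homotopes outward from $\lambda\pm\varepsilon$, never across $\lambda$.
Take $\mathcal{O}'\supset[\lambda-\varepsilon,\lambda+\varepsilon]\times\{0\}$, with $\overline{\mathcal{O}'}$ containing no $(\hat\mu,0)$ for a characteristic value $\hat\mu\neq\lambda$; your Step 3 construction gives this. For $\mu\in[\lambda-\varepsilon,\lambda+\varepsilon]$ every trivial zero is interior to $\mathcal{O}'$, so $d(\lambda-\varepsilon)=d(\lambda+\varepsilon)$ is legitimate.

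Now take $\mu\ge\lambda+\varepsilon$. Bounded solution sets are compact, so for $r$ small no solution in $\overline{\mathcal{O}'}$ has $0<\|u\|\le r$: a limit would be a point $(\mu^*,0)\in\overline{C}\cap\overline{\mathcal{O}'}$ with $\mu^*\neq\lambda$ characteristic. Hence $\{(\mu,u)\in\mathcal{O}' : \mu\ge\lambda+\varepsilon,\ \|u\|>r\}$ has no zeros on its boundary, and its slices are empty for large $\mu$. Therefore $\deg(\Phi_{\lambda+\varepsilon},\mathcal{O}'_{\lambda+\varepsilon}\setminus\overline{B_r},0)=0$, so $d(\lambda+\varepsilon)=i(\Phi_{\lambda+\varepsilon},0)$. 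Likewise $d(\lambda-\varepsilon)=i(\Phi_{\lambda-\varepsilon},0)$, and Step 1 gives the contradiction.

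\textbf{Hypotheses.} Step 2 and all the homotopies in $\mu$ need what Rabinowitz assumes and the statement as printed omits. The equation should read $u=\lambda Lu+H(\lambda,u)$, and $H$ must be compact jointly in $(\lambda,u)$. Also $H(\lambda,u)=o(\|u\|)$ must hold uniformly for $\lambda$ in bounded sets. The pointwise limit in the statement does not show that trivial points of $\overline{C}$ occur only at characteristic values.
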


One possible way to study problems of the form (\ref{mc}), is to approximate them by suitably elliptic problems, for which the classical methods apply. Our procedure here is based on the arguments of \cite{zog}. \vspace{0.2cm}

We introduce the Hilbert space $X$ to be defined as the completion of $C_0^{\infty} (\mathbb{R}^3)$ radial functions, with norm:
\[
||u||^2_{X} = \sum_{1 \leq |\alpha| \leq 6} ||D^{\alpha} u||^2_{L^2 (\mathbb{R}^3)} + \int_{\mathbb{R}^3} |\grad u|^2\, dx,
\]
i.e.,
\[
X = \left \{ u \in D_r^{1,2} (\mathbb{R}^3);\;\;\; D^{\alpha} u \in H_r^{5} (\mathbb{R}^3),\;\;\; |\alpha|=1 \right \}.
\]
We note that $H_r^{5} (\mathbb{R}^3)$ denotes the radial subspace of the classical Sobolev space $H^{5}$ and $D_r^{1,2} (\mathbb{R}^3)$ is the radial subspace of $D^{1,2} (\mathbb{R}^3)$ for which we refer to Section \ref{seceigen}. Since $X \subset D^{1,2} (\mathbb{R}^3) \subset L^6 (\mathbb{R}^3)$, the space is well defined i.e., the constant function does not belong in $X$. Moreover, $H^{5}$ is continuously imbedded in $C^3 (\mathbb{R}^3)$, thus $X$ is continuously imbedded in $C^4 (\mathbb{R}^3)$. We remind that for radial functions $H_r^1 (\mathbb{R}^3)$ is compactly imbedded in $L^p (\mathbb{R}^3)$, for $2<p<6$. Denote by $<.,.>_X$ the inner product in $X$.

The weak formulation of (\ref{mc}) is:
\[
\int_{\mathbb{R}^3} \nabla u \cdot \nabla \phi\, dx = \lambda\, \int_{\mathbb{R}^3} h\, u\, \phi\, dx + \int_{\mathbb{R}^3} H(\lambda,u)\, \phi\, dx,
\]
for $\phi \in X$ and
\[
H(\lambda,u) = \lambda\, h\, \frac{u\, |\nabla u|^2}{1+\sqrt{1 + |\nabla u|^2}} - \frac{1}{2}\, \frac{\nabla |\nabla u|^2 \cdot \nabla u}{1 + |\nabla u|^2}.
\]
First we will prove the existence of a global branch of solutions for the problem
\begin{equation} \label{eq1.1}
\int_{\mathbb{R}^3} \nabla u \cdot \nabla \phi\, dx = \lambda\, \int_{\mathbb{R}^3} h\, u\, \phi\, dx + \int_{\mathbb{R}^3} H(\lambda,u)\, \frac{u^2}{u^2 + \theta}\, \phi\, dx,
\end{equation}
where $\theta$ is a fixed positive real number. This will be done by assuming, for $\varepsilon$ small enough positive number, the problem:
\begin{equation} \label{eq1.3}
\varepsilon\, <u,\varphi>_X + \int_{\mathbb{R}^3} \nabla u \cdot \nabla \phi\, dx = \lambda\, \int_{\mathbb{R}^3} h\, u\, \phi\, dx + \int_{\mathbb{R}^3} H(\lambda,u)\, \frac{u^2}{u^2 + \theta}\, \phi\, dx.
\end{equation}
The corresponding linear eigenvalue problem of (\ref{eq1.3}) is:
\begin{equation} \label{eq1.4}
\varepsilon\, <u,\varphi>_X + \int_{\mathbb{R}^3} \nabla u \cdot \nabla \phi\, dx = \lambda\, \int_{\mathbb{R}^3} h\, u\, \phi\, dx,\;\;\;\;\;\; \mbox{for any}\;\;\; \phi \in X.
\end{equation}

Our intention is to prove that for each $\varepsilon$ small enough, there exists a branch of solutions of (\ref{eq1.3})
bifurcating from a certain simple eigenvalue of (\ref{eq1.4}), such that these branches converge to the global branch of
(\ref{eq1.1}), as $\varepsilon \downarrow 0$. In this direction, we face two main difficulties. The first is, that the properties of the eigenvalues of (\ref{eq1.4}) are not
known; it is unclear if they are close enough to $\lambda_0$,  and what is their multiplicity. The second difficulty, since we have a
singular perturbation, is to prove that the branches of solutions of (\ref{eq1.3}) converge in $X$; we cannot expect that the branches of solutions of (\ref{eq1.3}) converge
uniformly. However, a careful application of Whyburn's Lemma, is sufficient to give the desired (nonuniform) convergence. Finally, we will take the limit as $\theta$ tends to zero, to obtain the result for (\ref{mc}). \vspace{0.2cm}

In Section 2, we deal with the eigenvalue problem (\ref{eq1.4}). We
prove that for any $\varepsilon>0$ small enough, admits a positive
eigenvalue $\lambda_{0,\varepsilon}$, which is simple with the associated
eigenfunction being positive. The proof is not straightforward. Problem (\ref{eq1.2}) (and (\ref{eq1.5} below),
admits principal eigenvalue which is simple in the sense that the null space of the corresponding operator, $P_0$, is spanned by the
eigenfunction and the range of $P_0$ is the orthogonal complement of the null space in $D^{1,2} (\mathbb{R}^3)$. In other words, $P_0$ is Fredholm with
index 0, or equivalently $\lambda_0$ has algebraic multiplicity one. In our case, we do not have this: considering problem (\ref{eq1.2}) in the space $X$, although $X \subset
D^{1,2} (\mathbb{R}^3)$, the situation is different. In the context of the space $X$, the operator $P_0$ is no longer Fredholm with index 0. This means that the principal eigenvalue
$\lambda_0$ has multiplicity one (since there exists only one function satisfying (\ref{eq1.2})) but has not algebraic multiplicity one. This is the reason why the Theorem of perturbed eigenvalues (see for instance \cite{bt03, Hans04}) cannot be applied in the case of (\ref{eq1.2}) and (\ref{eq1.4}). However, the eigenvalues of the
singular perturbations, that remain close enough to $\lambda_0$, are proved to be algebraic simple.
\begin{theorem} \label{seigen}
There exists $s >0$ small enough, such that for every $\varepsilon$, with $0 < \varepsilon < s$, problem (\ref{eq1.4}) admits an eigenvalue
$\lambda_{0,\varepsilon}$ which is (algebraic) simple in $X$ and the associated normalized eigenfunction $u_{0,\varepsilon} \in X$
is positive. The perturbed eigenpairs form a continuous curve:
\[
\varphi(\varepsilon) = \{(\lambda_{0,\varepsilon},
u_{0,\varepsilon}),\;\;\; 0 \leq \varepsilon < s \} \subset
\mathbb{R} \times X.
\]
Moreover, for $0 \leq \varepsilon < s$, (\ref{eq1.4}) has no other eigenpair, close enough to $(\lambda_0, u_0)$, than that belonging in $\varphi(\varepsilon)$.
\end{theorem}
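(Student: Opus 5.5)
The plan is to rewrite (\ref{eq1.4}) as an eigenvalue problem for a compact self-adjoint operator on $X$ equipped with an $\varepsilon$-dependent inner product. For $\varepsilon>0$ put
\[
[u,\phi]_\varepsilon = \varepsilon <u,\phi>_X + \int_{\mathbb{R}^3} \nabla u\cdot\nabla\phi\, dx .
\]
This is equivalent to $<\cdot,\cdot>_X$, since $\varepsilon\|u\|_X^2\le [u,u]_\varepsilon\le (1+\varepsilon)\|u\|_X^2$. The form $b(u,\phi)=\int h\,u\,\phi\,dx$ is bounded on $D^{1,2}_r$: by Hardy's inequality, $h\le C|x|^{-2}$. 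It is also compact there, because $h$ decays like $|x|^{-4}$ and radial functions have compact embeddings. Riesz representation then gives a compact, self-adjoint, positive operator $K_\varepsilon$ on $(X,[\cdot,\cdot]_\varepsilon)$ with $[K_\varepsilon u,\phi]_\varepsilon=b(u,\phi)$. Problem (\ref{eq1.4}) is exactly $K_\varepsilon u=\lambda^{-1}u$. Because $K_\varepsilon$ is self-adjoint, every nonzero eigenvalue is semisimple. Algebraic simplicity in $X$ therefore reduces to geometric simplicity, and this avoids the non-Fredholm behaviour of $P_0$ in $X$ noted above.

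Next I will take $\lambda_{0,\varepsilon}$ to be the principal eigenvalue, given by the Rayleigh quotient
\[
\lambda_{0,\varepsilon}=\inf_{u\in X\setminus\{0\}}\frac{[u,u]_\varepsilon}{b(u,u)} .
\]
The infimum is attained by compactness of $K_\varepsilon$. To get positivity and simplicity I would not use $|u|$, since $|u|\notin X$ in general. Instead I would use elliptic regularity. Because the minimizer is radial, (\ref{eq1.4}) is a linear ODE in $r$ of order $12$ with smooth coefficients, where the operator is $\varepsilon\sum_{1\le|\alpha|\le6}(-1)^{|\alpha|}D^{2\alpha}-(1+\varepsilon)\Delta$. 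Positivity would then follow from perturbation. I will show that $\lambda_{0,\varepsilon}\to\lambda_0$ and that, along subsequences, $u_{0,\varepsilon}\to u_0$ in $D^{1,2}$ and locally in $C^k$, where $u_0>0$ is the known positive principal eigenfunction of (\ref{eq1.2}). To see this, take any $\psi\in X$ in the quotient to get $\limsup\lambda_{0,\varepsilon}\le\lambda_0$, using density of $X$ in $D^{1,2}_r$ for the reverse inequality. Then normalize $b(u,u)=1$, and compactness of $b$ together with the simplicity of $\lambda_0$ in $D^{1,2}$ identifies the limit. The same argument shows uniqueness near $(\lambda_0,u_0)$: any eigenpair close to $(\lambda_0,u_0)$ must converge to the simple limit. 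Two such pairs orthogonal for $[\cdot,\cdot]_\varepsilon$ would give two $b$-orthogonal limits at $\lambda_0$, which is impossible. Hence the eigenspace near $\lambda_0$ is one-dimensional for small $\varepsilon$.

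Continuity of $\varepsilon\mapsto(\lambda_{0,\varepsilon},u_{0,\varepsilon})$ for $\varepsilon>0$ comes from norm-continuity of $K_\varepsilon$ in $\varepsilon$ after transporting it to the fixed inner product of $X$. Since the eigenvalue is isolated and simple, the Riesz projection varies continuously with $\varepsilon$. At $\varepsilon=0$ the curve joins $(\lambda_0,u_0)$ in the topology of $\mathbb{R}\times D^{1,2}$; if the statement demands the $X$-topology there, I would need to argue that $u_0\in X$, which holds since $u_0$ is explicit and smooth with $h$ decaying.

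The main obstacle I expect is positivity of $u_{0,\varepsilon}$ on all of $\mathbb{R}^3$. Convergence to $u_0$ only gives positivity on compact sets, while $u_0$ decays like $c/|x|$ at infinity. My first attempt would be an asymptotic analysis of the radial ODE for large $r$. There the term $h$ is negligible, so solutions in $X$ behave like those of $\varepsilon\sum(-\Delta)^k u-(1+\varepsilon)\Delta u=0$. The decaying harmonic mode $c_\varepsilon/r$ dominates the exponentially decaying modes, and I would show $c_\varepsilon\to c>0$ by computing the flux $\lim_{r\to\infty} r^2u'(r)$ through the integrated equation, $c_\varepsilon=\frac{\lambda_{0,\varepsilon}}{4\pi}\int h\,u_{0,\varepsilon}$. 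This flux converges to the corresponding positive quantity for $u_0$. Combining it with uniform control of the exponential modes in an annulus $R\le r\le\infty$ would give $u_{0,\varepsilon}>0$ everywhere.
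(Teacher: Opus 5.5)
Most of your plan runs parallel to the paper. You take the principal eigenvalue from the Rayleigh quotient on $X$, with $\lambda_0\le\lambda_{0,\varepsilon}\le\lambda_0+\varepsilon\|u_0\|_X^2/\|u_0\|^2_{L^2_h}$. For this, simply insert $u_0\in X$: no density argument is needed, and the lower bound is trivial. You get simplicity and local uniqueness by letting two $L^2_h$-orthogonal eigenfunctions converge to the simple limit. You get algebraic simplicity from self-adjointness, which is what the paper's test with $\phi=u_{0,\varepsilon}$ expresses. Running the uniqueness argument only in $D^{1,2}$ and $L^2_h$ is fine, and your Riesz-projection argument for $\varepsilon>0$ is a valid substitute for Lemma \ref{phicontinuous}.

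The genuine gap is at $\varepsilon=0$ in the topology of $X$. The statement puts $(\lambda_0,u_0)$ on a continuous curve in $\mathbb{R}\times X$, so you need $u_{0,\varepsilon}\to u_0$ strongly in $X$, but you only prove convergence in $D^{1,2}$. Your remedy, $u_0\in X$, is necessary but not sufficient. Convergence in $D^{1,2}$ to an element of $X$ gives by itself no bound on $\|u_{0,\varepsilon}\|_X$; the equation alone only yields $\varepsilon\|u_{0,\varepsilon}\|_X^2\le C$. Since $\varepsilon$ multiplies the top-order part, ``elliptic regularity'' only gives $\varepsilon$-dependent bounds. The same objection hits your claim of local $C^k$ convergence, on which your positivity argument rests.

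The missing idea is the paper's Lemma \ref{lem2.1}. Write $u_{0,\varepsilon}=a_\varepsilon u_0+\xi_\varepsilon$ with $\langle\xi_\varepsilon,u_0\rangle_{D^{1,2}}=0$. Then also $\int h\,\xi_\varepsilon u_0\,dx=0$, and $a_\varepsilon$ is bounded. Testing (\ref{eq1.4}) with $\xi_\varepsilon$ and using $-\Delta u_0=\lambda_0 hu_0$ gives
\[
\varepsilon\|\xi_\varepsilon\|_X^2+\Big(\|\xi_\varepsilon\|^2_{D^{1,2}}-\lambda_{0,\varepsilon}\int_{\mathbb{R}^3}h\,\xi_\varepsilon^2\,dx\Big)=-\varepsilon\,a_\varepsilon\langle u_0,\xi_\varepsilon\rangle_X .
\]
By the spectral gap, the bracket is at least $(\lambda^{(2)}_0-\lambda_{0,\varepsilon})\int h\,\xi_\varepsilon^2\,dx\ge 0$, where $\lambda^{(2)}_0$ is the second eigenvalue of (\ref{eq1.2}). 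Dividing by $\varepsilon$ gives $\|\xi_\varepsilon\|_X^2\le -a_\varepsilon\langle u_0,\xi_\varepsilon\rangle_X$, which bounds $\xi_\varepsilon$ in $X$. The bracket is then $O(\varepsilon)$, which forces $\xi_\varepsilon\to 0$ in $D^{1,2}$ and hence $\xi_\varepsilon\rightharpoonup 0$ in $X$. The same inequality then upgrades this to $\|\xi_\varepsilon\|_X\to 0$; this is where $u_0\in X$ really enters.

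Alternatively, on the Fourier side the radially averaged symbol of the left side of (\ref{eq1.4}) dominates $|\xi|^2$. Bootstrapping gives $\varepsilon$-uniform $L^2$ bounds on all derivatives of $u_{0,\varepsilon}$ of order $\ge 1$, and interpolation with the $D^{1,2}$ convergence then gives convergence in $X$.

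Radial elements of $X$ are bounded in $L^\infty$ by $C\|\cdot\|_X$, so $X$-convergence also gives uniform convergence on $\mathbb{R}^3$, hence positivity on every fixed ball. For the tail, note that the paper passes directly from $X$-convergence to positivity. As you correctly observe, that only controls compact sets, because $u_0\sim c/|x|$. Your flux argument is the right kind of supplement. However, the $\varepsilon$-uniform control of the exponentially decaying modes on $\{|x|\ge R\}$ is still only announced and has to be carried out.
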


Let $g_n$ be a smooth enough radial function, at least $C^{4}(\mathbb{R}^3)$, with $g_n \in L^{3/2}(\mathbb{R}^3) \cap H^4 (\mathbb{R}^3)$ and the derivatives of order up to $4$ belong to $L^{3/2} (\mathbb{R}^3)$. The measure of the positive part $|g^+_n|$ is not zero in $\mathbb{R}^3$. Then, Theorem \ref{seigen} is also applicable in the case of the linear problems
\begin{equation} \label{eqsweight}
\varepsilon\, <u,\varphi>_X + \int_{\mathbb{R}^3} \nabla u \cdot \nabla \phi\, dx = \lambda\, \int_{\mathbb{R}^3} g_n\, u\, \phi\, dx,\;\;\;\;\;\; \mbox{for any}\;\;\; \phi \in X.
\end{equation}
For such $g_n$, there exists a principal eigenvalue $\lambda_{n,0}$ for the problem
\begin{equation} \label{eq1.5}
-\Delta u = \lambda\, g_n(x)\, u,\;\;\; x \in \mathbb{R}^3.
\end{equation}
and the corresponding normalized eigenfunction $u_{n,0}$ belongs in $X$. Then, as in Theorem \ref{seigen}, there
exist a local, to $(\lambda_{n,0}, u_{n,0})$, curve of eigenpairs for the perturbed problems (\ref{eqsweight}). Denote this curve by
\[
\varphi_n(\varepsilon) = \{(\lambda_{n,\varepsilon},
u_{n,\varepsilon}),\;\;\; 0 \leq \varepsilon < s \} \subset
\mathbb{R} \times X.
\]

Our last result concerning eigenvalue problems is the following Proposition. For the proof we
refer to Subsection \ref{prmainprop}.
\begin{proposition} \label{mainprop}
Let $g_n$ and $g_0$ be $C^{4}(\mathbb{R}^3)$ functions, and let $(\lambda_{n,0}, u_{n,0})$, $(\lambda_0, u_0)$ be the principal eigenpairs of (\ref{eq1.5}) with weights
$g_n$, $g_0$, respectively. Then, $u_{n,0} \to u_0$ in $X$ if and only if the set
\[
\Phi := \bigcup_{n > N(\delta)} \bigcup_{0 \leq \varepsilon \leq s_*} \varphi_n(\varepsilon),
\]
is compact in $X \times \mathbb{R}$. The positive real numbers $s_*$ and $\delta$, depend only on (\ref{eq1.5}) with weight $g_0$.
\end{proposition}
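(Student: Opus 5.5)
The two directions are of very different difficulty, so I would treat them separately. The plan is to use the local uniqueness part of Theorem \ref{seigen}, transplanted to the weights $g_n$, to control the perturbed curves $\varphi_n(\varepsilon)$ uniformly in $n$. I would also use the fact that $\varphi_n(0)=(\lambda_{n,0},u_{n,0})$.

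\emph{Compactness implies convergence.} Assume $\Phi$ is compact. Every $(\lambda_{n,0},u_{n,0})$ with $n>N(\delta)$ lies in $\Phi$ (take $\varepsilon=0$), so every subsequence has a further subsequence converging in $X\times\mathbb{R}$ to some $(\mu,v)$. Along such a subsequence I would pass to the limit in the weak formulation of (\ref{eq1.5}):
\[
\int \nabla u_{n,0}\cdot\nabla\phi\,dx=\lambda_{n,0}\int g_n u_{n,0}\phi\,dx .
\]
This requires convergence $g_n\to g_0$ in a topology compatible with the hypotheses on $g_n$. The natural choice is $L^{3/2}$ together with its derivatives, and I would take this as part of the standing assumptions on $\{g_n\}$. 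The limit $v$ is then normalized, nonnegative and nontrivial, and it solves (\ref{eq1.5}) with weight $g_0$ and eigenvalue $\mu$.

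By the positivity/uniqueness of principal eigenfunctions (a nonnegative eigenfunction must be the principal one), we get $(\mu,v)=(\lambda_0,u_0)$. Since every subsequence has the same limit, the whole sequence satisfies $u_{n,0}\to u_0$ in $X$.

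\emph{Convergence implies compactness.} Assume $u_{n,0}\to u_0$ in $X$; then $\lambda_{n,0}\to\lambda_0$ as well, by testing the equation against $u_{n,0}$. I would choose $\delta$ and $s_*$ as in the proof of Theorem \ref{seigen} for the weight $g_0$. Take $\delta$ to be the radius of the neighbourhood of $(\lambda_0,u_0)$ in which the perturbed eigenpair is unique, and $s_*<s$ so small that $\varphi(\varepsilon)$ stays within $\delta/2$ of $(\lambda_0,u_0)$ for $\varepsilon\le s_*$. Choose $N(\delta)$ so that $\|(\lambda_{n,0},u_{n,0})-(\lambda_0,u_0)\|<\delta/4$ for $n>N(\delta)$.

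The key step is to show that for $n>N(\delta)$ the curves $\varphi_n$ are defined on all of $[0,s_*]$ and stay in the $\delta$-ball. I would argue this by a continuation/contradiction argument using the implicit-function construction from Theorem \ref{seigen}, whose constants depend continuously on the weight, together with the uniqueness clause. Given this, consider a sequence $(\lambda_{n_k,\varepsilon_k},u_{n_k,\varepsilon_k})$ in $\Phi$ and pass to a subsequence with $\varepsilon_k\to\bar\varepsilon$ and, if infinitely many $n_k$ coincide, reduce to the continuity of a single curve. Otherwise $n_k\to\infty$, and I would show the pairs converge to $\varphi(\bar\varepsilon)$. The a priori bound in $X$ follows from testing (\ref{eqsweight}) with $u$ and from the $\delta$-ball confinement. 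Upgrading weak to strong convergence in $X$ uses the $\varepsilon\langle u,\phi\rangle_X$ term: subtracting the equations and testing with the difference gives control of $\varepsilon_k\|u_k-u\|_X^2$ by the weighted terms, which converge by the compact radial embeddings.

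\emph{Main obstacle.} The obstacle is the case $\bar\varepsilon=0$. There the $X$-coercivity degenerates, because $\varepsilon_k\|\cdot\|_X^2$ gives no control, and strong $X$-convergence of $u_{n_k,\varepsilon_k}$ to $u_0$ is exactly the singular-perturbation difficulty mentioned in the introduction. I would first try to exploit the uniform-in-$n$ local uniqueness in the $X$-ball of radius $\delta$. Any limit point must be an eigenpair of (\ref{eq1.2}) lying in the closed ball, hence equal to $(\lambda_0,u_0)$. Convergence of norms would then be obtained by comparing with $u_{n_k,0}\to u_0$ through the difference of the $\varepsilon$- and $0$-problems for the same weight $g_{n_k}$. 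This would use the algebraic simplicity of $\lambda_{n,\varepsilon}$ to bound $\|u_{n,\varepsilon}-u_{n,0}\|_X$ by a modulus in $\varepsilon$ that is uniform in $n$.
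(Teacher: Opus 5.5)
Your compactness $\Rightarrow$ convergence direction is fine once $g_n\to g_0$ is an explicit hypothesis; the paper calls this direction immediate and tacitly uses the same convergence. Your $\bar\varepsilon>0$ case and the reduction to a single curve are also sound. The gap is in the forward direction, at the step you call the main obstacle, and the tools you invoke do not close it.

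\emph{The tools you cite are not available.} Theorem \ref{seigen} is not proved by an implicit-function argument. The paper stresses that the unperturbed operator is not Fredholm of index $0$ on $X$, which is exactly why perturbation theory is unavailable. The curve $\varphi_n(\varepsilon)$ is simply the principal eigenpair of (\ref{eqsweight}), obtained variationally. So there are no constants ``depending continuously on the weight''. Existence on all of $[0,s_*]$ is automatic, whereas confining every $\varphi_n$ to a $\delta$-ball uniformly in $n$ is essentially the conclusion itself. The uniqueness clause is qualitative: it is proved sequentially as $\varepsilon\downarrow 0$ for one fixed weight and says nothing about the $g_n$-problems.

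\emph{Identification is not the difficulty; loss of norm is.} The bounds $\lambda_{n,0}\le\lambda_{n,\varepsilon}\le\lambda_{n,0}+\varepsilon\|u_{n,0}\|^{-2}_{L^2_{g_n}}$ together with positivity already give $\lambda_*=\lambda_0$ and $u_*=c\,u_0$ with $0\le c\le 1$. What must be excluded is $c<1$, i.e.\ loss of $X$-norm in the weak limit. Algebraic simplicity cannot do this. The inverse of $L_{n,\varepsilon}$ on the complement of its kernel has norm at least $\varepsilon^{-1}$: since $\inf_{u\in X}\|u\|^2_{D^{1,2}}/\|u\|^2_X=0$, the point $\varepsilon$ lies in the essential spectrum of $L_{n,\varepsilon}$. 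Meanwhile $L_{n,\varepsilon}(u_{n,\varepsilon}-u_{n,0})=-L_{n,\varepsilon}u_{n,0}$ is only $O(\varepsilon)$. This route therefore yields an $O(1)$ bound, not a modulus uniform in $n$.

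\emph{The missing idea.} You need an $\varepsilon$-free estimate, uniform in $n$, on the part of $u_{n,\varepsilon}$ that a weak limit can lose. It comes from comparing the $\varepsilon$- and $0$-problems for the same weight at the level of quadratic forms, not by inverting $L_{n,\varepsilon}$. The paper proceeds as follows.
\begin{itemize}
\item It fixes $2\delta<\lambda^{(2)}_0-\lambda_0$ and an explicit $s_*$, so that (\ref{eigbound}) keeps $\lambda_{n,\varepsilon}\le\lambda^{(2)}_0-\delta<\lambda^{(2)}_{n,0}$ for all $n>N(\delta)$ and $\varepsilon\le s_*$ (see (\ref{lbound}), (\ref{lfinal})).
\item It writes $u_{n,\varepsilon}=\alpha_{n,\varepsilon}u_{n,0}+\eta_{n,\varepsilon}$ with $\eta_{n,\varepsilon}\perp u_{n,0}$ in $X$, and then $\eta_{n,\varepsilon}=\beta_{n,\varepsilon}u_{n,0}+\xi_{n,\varepsilon}$ with $\xi_{n,\varepsilon}\perp u_{n,0}$ in $D^{1,2}(\mathbb{R}^3)$.
\item Testing (\ref{bab4a}) with $u_{n,0}$ and with $\eta_{n,\varepsilon}$ gives the identity (\ref{eq4.10}). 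There the $\xi$-bracket is nonnegative by the uniform spectral gap and $\varepsilon$ factors out, so $\|\eta_{n,\varepsilon}\|_X^2\le\alpha_{n,\varepsilon}\beta_{n,\varepsilon}$ (see (\ref{hfinal})).
\item Since $\beta_{n,\varepsilon}$ is an $L^2_{g_n}$-pairing (\ref{b}), it passes to weak limits. Together with $\alpha_{n,\varepsilon}^2+\|\eta_{n,\varepsilon}\|_X^2=1$ this rules out $u_*=0$. Once $u_*=c\,u_0$, one gets $\eta_{n,\varepsilon}\rightharpoonup 0$, hence $\beta_{n,\varepsilon}\to0$, $\|\eta_{n,\varepsilon}\|_X\to 0$, and strong convergence.
\end{itemize}

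An even shorter version of the same comparison is available. Test (\ref{bab4a}) with $u_{n,\varepsilon}$ and use $\|u\|^2_{D^{1,2}}\ge\lambda_{n,0}\|u\|^2_{L^2_{g_n}}$ together with (\ref{eigbound}). For $X$-normalized eigenfunctions this gives $\|u_{n,\varepsilon}\|_{L^2_{g_n}}\ge\|u_{n,0}\|_{L^2_{g_n}}$. That inequality survives weak limits and forces $c\ge 1$, hence $c=1$.
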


Based on these results, we prove the existence of a branch of solutions, $\mathcal{C}_{\varepsilon}$, for the problem (\ref{eq1.3}), for any $\varepsilon$ small enough,
bifurcating from $\lambda_{0,\varepsilon}$. We are not in the position to exclude the second alternative of Theorem \ref{rab}. The standard
argument, based on the maximum principle, which ensures that the branches are global and the solutions belonging in these branches
are positive, cannot be applied. However, we prove that if the second alternative of Theorem \ref{rab} holds, then $\lambda^*_{\varepsilon} \to \infty$,
as $\varepsilon \downarrow 0$. \vspace{0.2cm}

In Section 3, we leave $\varepsilon \downarrow 0$, in order
to obtain a global branch of solutions for the problem
(\ref{eq1.1}).
\begin{theorem} \label{global}
The principal eigenvalue $\lambda_{0} > 0$ of the problem (\ref{eq1.2}) is a bifurcation point of the perturbed problem (\ref{eq1.1}). More precisely, the closure of the set
\[
\mathcal{C}_0=\{ (\lambda ,u) \in \mathbb{R} \times
X : (\lambda,u)\;\; \mbox{solves}\;\;
(\ref{eq1.1}),\; u \not\equiv 0 \},
\]
possesses a maximal continuum (i.e. connected branch) of solutions,\ $\mathcal{C}_0$,\ such that\ $(\lambda_{0},0) \in
\mathcal{C}_0$\ and\ $\mathcal{C}_0$\ is unbounded in $\mathbb{R} \times X$. Moreover, there are
maximal connected subsets $\mathcal{C}^+_0$, $\mathcal{C}^-_0$ of
$\mathcal{C}_0$ containing $(\lambda_0,0)$ in their closure, such
that $u(x) > (<)0$, for every $x \in \mathbb{R}^3$, if $(\lambda ,u) \in
\mathcal{C}^{+(-)}_0$, respectively.
\end{theorem}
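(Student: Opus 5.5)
The plan is to obtain $\mathcal{C}_0$ as a limit of the branches $\mathcal{C}_\varepsilon$ of the regularized problem (\ref{eq1.3}), using Whyburn's lemma on limits of connected sets.

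\textbf{Step 1: the regularized branches.} For fixed $\varepsilon\in(0,s)$ I rewrite (\ref{eq1.3}) as a fixed point problem $u=G_\varepsilon(\lambda,u)$ in $X$ via the Riesz representation for the coercive form $\varepsilon<u,\phi>_X+\int\nabla u\cdot\nabla\phi$.
\begin{itemize}
\item The linear part $u\mapsto \lambda\, h u$ is compact in $X$. This uses the decay of $h$ and of its derivatives, the radial compact embeddings, and the fact that the inverse operator gains regularity.
\item The nonlinear part $H(\lambda,u)\,u^2/(u^2+\theta)$ is compact and $o(\|u\|_X)$ near $0$, because $X\subset C^4$ and $H$ is at least quadratic in $(u,\nabla u,D^2u)$.
\end{itemize}
By Theorem \ref{seigen}, $\lambda_{0,\varepsilon}$ is an algebraically simple eigenvalue. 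Theorem \ref{rab} then gives a maximal continuum $\mathcal{C}_\varepsilon\ni(\lambda_{0,\varepsilon},0)$ which is either unbounded or meets some $(\lambda^*_\varepsilon,0)$ with $\lambda^*_\varepsilon\neq\lambda_{0,\varepsilon}$.

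I then show that in the second alternative $\lambda^*_\varepsilon\to\infty$. Suppose instead that a bounded sequence $\lambda^*_{\varepsilon_k}$ exists. The normalized solutions near $(\lambda^*_{\varepsilon_k},0)$ converge to an eigenfunction of (\ref{eq1.2}) with eigenvalue $\lambda^*\ne\lambda_0$; this uses the uniqueness part of Theorem \ref{seigen} together with Proposition \ref{mainprop}. Eigenfunctions of $\varepsilon$-problems lying on the branch must have a sign near $\lambda_{0,\varepsilon}$ by continuity in $C^4$. Tracking the sign along the branch, a sign-changing limit would contradict positivity of the principal eigenfunction.

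\textbf{Step 2: passing to the limit $\varepsilon\downarrow0$.} Fix a large ball $B_R$ in $\mathbb{R}\times X$ around $(\lambda_0,0)$.
\begin{itemize}
\item Each $\mathcal{C}_\varepsilon$ contains $(\lambda_{0,\varepsilon},0)\to(\lambda_0,0)$, and by Step 1 each must reach $\partial B_R$.
\item The key estimate is that $\bigcup_{\varepsilon<s}\mathcal{C}_\varepsilon\cap \overline{B_R}$ is relatively compact in $\mathbb{R}\times X$.
\end{itemize}
Granting this, Whyburn's lemma shows that $\liminf\mathcal{C}_\varepsilon$ is nonempty and $\limsup\mathcal{C}_\varepsilon$ contains a connected set joining $(\lambda_0,0)$ to $\partial B_R$. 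One checks that its points solve (\ref{eq1.1}) by passing to the limit in the weak formulation, since $\varepsilon<u,\phi>_X\to0$ for bounded $u$. Letting $R\to\infty$ gives an unbounded continuum contained in $\mathcal{C}_0$.

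\textbf{Main obstacle: the compactness in Step 2.} Since the perturbation is singular, uniform $X$-bounds do not pass through the $\varepsilon$-term. My first attempt is to write (\ref{eq1.3}) as $-\Delta u=\lambda h u+\tilde H$ with the $\varepsilon$-term on the right, then use elliptic regularity on $-\Delta$ in $D^{1,2}$ and the bootstrap $H^k\to H^{k+2}$ to gain regularity uniformly. The cutoff $u^2/(u^2+\theta)$ and the boundedness of $\nabla u$ in $C^3$ help here. Compactness of the weight terms comes from radial embeddings, using the decay $h\sim|x|^{-4}$. Where uniform estimates fail, I would mimic Proposition \ref{mainprop}: compactness holds if and only if the limiting solutions lie in $X$, which I verify via the regularity of solutions of (\ref{eq1.1}).

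\textbf{Step 3: sign.} Near $(\lambda_0,0)$, solutions are $u=\alpha u_0+o(\alpha)$ in $X\subset C^4$. Together with decay at infinity (a Kelvin-transform or comparison argument for $|x|^{-1}$ behaviour), this gives $u>0$ or $u<0$ on all of $\mathbb{R}^3$, splitting $\mathcal{C}_0$ locally into $\mathcal{C}_0^\pm$. To propagate the sign along the branch, suppose a solution in the closure of $\mathcal{C}_0^+$ has $u\ge0$ with a zero. Writing (\ref{eq1.1}) as a linear equation $-\Delta u - b\cdot\nabla u = c(x)u$ with bounded coefficients, which is possible thanks to the factor $u^2/(u^2+\theta)$, the strong maximum principle forces $u\equiv0$. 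Such a point is then a bifurcation point from zero, which can only be $(\lambda_0,0)$ since positive eigenfunctions occur only there. So positivity is an open and closed condition on $\mathcal{C}_0\setminus\{(\lambda_0,0)\}$, and the case of $\mathcal{C}_0^-$ is symmetric under $u\mapsto-u$.
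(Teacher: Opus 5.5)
Your architecture is the paper's: Rabinowitz for each fixed $\varepsilon$, then Whyburn's lemma on $\bar B_R(\lambda_0,0)\cap\mathcal{C}_{\varepsilon_n}$, then $R\to\infty$. The gap is exactly the step you flag, relative compactness of these sets in $\mathbb{R}\times X$, and neither of your suggestions closes it.

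\emph{Why your bootstrap fails.} Moving $\varepsilon<u,\cdot>_X$ to the right makes it a forcing term $\varepsilon Pu$, where $P=\sum_{1\le|\alpha|\le6}(-1)^{|\alpha|}D^{2\alpha}-\Delta$ has order twelve. A bound on $\|u\|_X$ only puts $Pu$ in $H^{-6}$, so inverting $-\Delta$ gains nothing at the level of $X$. Next, $\tilde H$ contains $\frac{u^2}{u^2+\theta}\,\frac{\nabla u^{T}D^2u\,\nabla u}{1+|\nabla u|^2}$, which has the same order as $\Delta u$, so ``$H^k\to H^{k+2}$'' gains no derivatives. Finally, even uniform higher-order bounds would not make the $L^2$-based norm of $X$ compact on $\mathbb{R}^3$. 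Radial embeddings are compact only into $L^p$, $2<p<6$, so $\nabla u$ in $L^2$ needs separate control. Your fallback also misreads Proposition \ref{mainprop}. It concerns the principal eigencurves $\varphi_n$ of the \emph{linear} problems (\ref{eqsweight}), and equates their compactness with $u_{n,0}\to u_0$ in $X$. Weak limits lie in $X$ automatically; what matters is convergence of norms.

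\emph{How the paper gets compactness.} Positivity comes first. Lemma \ref{vanish} shows that bounded parts of the branches are positive, by letting $\varepsilon_n\to0$ and applying the maximum principle to the second-order limit equation. A positive solution of (\ref{eq1.3}) is then a positive eigenfunction of (\ref{eqsweight}) with the solution-dependent weight $g_{\varepsilon_n}$ of (\ref{gbif}), so its $X$-normalization lies on $\varphi_n$. If the weak limit is $0$, Lemma \ref{lemgc1} gives $v_n\to u_0$ in $X$, and Proposition \ref{mainprop} forces $u_{\varepsilon_n}\to0$ in $X$. If the weak limit is $u_*\neq0$, uniqueness of the positive eigenfunction of the limit weighted problem rules out loss of norm (Lemma \ref{basic argument}).

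\emph{Step 1 has a related problem.} For fixed $\varepsilon$ the operator has order twelve and no maximum principle, as the paper stresses. So you cannot ``track the sign along $\mathcal{C}_\varepsilon$''. The paper argues only through bounded sequences with $\varepsilon_n\to0$ (Lemmas \ref{vanish}, \ref{notubounded}).

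\emph{Your regularity idea can be repaired, without positivity.} Keep the $\varepsilon$-term on the left.
\begin{itemize}
\item Since $<\cdot,\cdot>_X$ has constant coefficients and nonnegative symbol $p(\xi)$, the equation reads $(\varepsilon p(\xi)+|\xi|^2)\hat u=\hat F$. Hence $|\xi|^2|\hat u|\le|\hat F|$, uniformly in $\varepsilon$.
\item The top-order part of $F$ is $B:D^2u$ with $|B|\le M^2/(1+M^2)<1$, where $M=\sup|\nabla u|$ is bounded on $\bar B_R$. After five differentiations this term can be absorbed, giving a uniform bound on $\|D^7u\|_{L^2}$.
\item $F$ is precompact in $\dot H^{-1}$, via (\ref{Dcmim}), $L^{6/5}\hookrightarrow\dot H^{-1}$ and the radial embeddings. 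Together with the multiplier bound $|\xi|/(\varepsilon p+|\xi|^2)\le|\xi|^{-1}$, this makes $\nabla u$ compact in $L^2$.
\item Interpolating with the $D^7u$ bound then gives compactness in $X$.
\end{itemize}
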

This is done with the use of \emph{Whyburn's Lemma} \cite{why58}:
\vspace{0.2cm}

Let $G$ be any infinite collection of point sets. The set of all
points $x$ such that every neighborhood of $x$ contains points of
infinitely many sets of $G$ is called the \emph{superior limit of
$G$ ($\limsup G$)}. The set of all points $y$ such that every
neighborhood of $y$ contains points of all but a finite number of
sets of $G$ is called the \emph{inferior limit of $G$ ($\liminf
G$)}.
\begin{theorem} \label{whyburn}
Let $\{ A_n \}_{n \in \mathbb{N}}$ be a sequence of connected
closed sets such that
\[
\liminf_{n \to \infty} \{ A_n \} \not\equiv \emptyset.
\]
Then, if the set $\bigcup_{n \in \mathbb{N}} A_n$ is relatively
compact, $\limsup_{n \to \infty} \{A_n\}$ is a closed, connected
set.
\end{theorem}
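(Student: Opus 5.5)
The plan is to argue directly from the definitions, working in the metric space $\mathbb{R}\times X$ (all we use is that the ambient space is metric, with distance denoted $d(\cdot,\cdot)$). First I would check that $L:=\limsup_{n\to\infty}\{A_n\}$ is closed. If $x\notin L$, then some open ball $B(x,r)$ meets only finitely many $A_n$. Every point of $B(x,r)$ has a smaller ball inside $B(x,r)$, so no point of $B(x,r)$ lies in $L$. Hence the complement of $L$ is open. Next, $\liminf\{A_n\}\subset L$, so $L\neq\emptyset$. Moreover $L\subset\overline{\bigcup_n A_n}$, and this closure is compact by hypothesis. So $L$ is a compact set. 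The closedness of the individual $A_n$ is not really needed; their connectedness is.

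For connectedness I would argue by contradiction. Suppose $L=P\cup Q$, where $P,Q$ are nonempty, disjoint and closed in $L$. Both are then compact, so $\delta:=d(P,Q)>0$. Let $U_P,U_Q$ be the open $\delta/3$-neighbourhoods of $P$ and $Q$. These are disjoint open sets, and their $\delta/3$-thickenings are still at positive distance from each other. Fix $y\in\liminf\{A_n\}\subset L$; without loss of generality $y\in P$. By the definition of the inferior limit, $A_n\cap U_P\neq\emptyset$ for all $n$ large. Fix $q\in Q$. Since $q\in L$, $A_n\cap U_Q\neq\emptyset$ for infinitely many $n$. So there is an infinite set of indices $n$ for which $A_n$ meets both $U_P$ and $U_Q$.

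For these $n$, $A_n$ is connected and meets two disjoint open sets, so it cannot be contained in $U_P\cup U_Q$. Choose $x_n\in A_n\setminus(U_P\cup U_Q)$. By relative compactness of $\bigcup_n A_n$, a subsequence $x_{n_k}$ converges to some point $x$. Every neighbourhood of $x$ contains $x_{n_k}$ for infinitely many distinct indices, and hence meets infinitely many $A_n$; therefore $x\in L$. On the other hand, $d(x_{n_k},P)\ge\delta/3$ and $d(x_{n_k},Q)\ge\delta/3$ for every $k$, so in the limit $d(x,P)\ge\delta/3$ and $d(x,Q)\ge\delta/3$. This gives $x\notin P\cup Q=L$, a contradiction, and so $L$ is connected.

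The only delicate step is producing the "escaping" points $x_n$ and showing that their limit lies in $L$. This is exactly where both hypotheses are used. The nonempty inferior limit guarantees that one of the two pieces is visited by all but finitely many $A_n$, so that infinitely many $A_n$ meet both pieces. Compactness supplies the limit point. Compactness is also used earlier, to get $d(P,Q)>0$; without it, two disjoint closed sets could be at distance zero.
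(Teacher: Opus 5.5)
Your proof is correct. It is the standard direct argument for Whyburn's lemma in a metric space such as $\mathbb{R}\times X$, and each step checks out:
\begin{itemize}
\item the complement of $\limsup\{A_n\}$ is open;
\item $\limsup\{A_n\}\subset\overline{\bigcup_n A_n}$ gives compactness;
\item $d(P,Q)>0$ for the two disjoint compact pieces;
\item connectedness forces escaping points $x_n\in A_n\setminus(U_P\cup U_Q)$;
\item a cluster point of the $x_n$ lies in $\limsup\{A_n\}$ but outside $P\cup Q$.
\end{itemize}
The paper gives no proof of its own. It quotes the statement from Whyburn's book and says it is used in the sense of Dai's Theorem 2.1, so your argument supplies something the paper only cites.

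One side remark in your proposal is wrong, though never used. The $\delta/3$-thickenings of $U_P$ and $U_Q$ can overlap: on $\mathbb{R}$ with $P=\{0\}$ and $Q=\{\delta\}$, both contain $\delta/2$. What is true is $d(U_P,U_Q)\geq\delta/3$, and your argument only needs $U_P\cap U_Q=\emptyset$.

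What the reference to Dai buys over your version is a form of the lemma for unbounded connected sets whose union is relatively compact only on bounded sets. That is what the paper's application requires. There $A_n=\bar{B}_R(\lambda_0,0)\cap\mathcal{C}_{\varepsilon_n}$ is asserted to be connected, but the intersection of a connected set with a ball need not be connected. Your argument uses connectedness of each $A_n$ essentially, since it is what produces the $x_n$. So it proves the lemma exactly as stated, but it does not by itself justify that step. One would have to pass to the component of $\bar{B}_R(\lambda_0,0)\cap\mathcal{C}_{\varepsilon_n}$ through $(\lambda_{0,\varepsilon_n},0)$ and check that it still reaches $\partial B_R(\lambda_0,0)$.
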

The application of Whyburn's Lemma in bifurcation theory is rather standard; We refer in \cite{eg00} and the
references therein. In our case, we prove that $\mathcal{C}_{\varepsilon} \to \mathcal{C}_{0}$ in $\mathbb{R} \times X \cap
B_R(\lambda_0,0)$, for any $R \in \mathbb{R}$, where $B_R (\lambda_0,0)$ denotes the ball of $\mathbb{R} \times
X$ with center $(\lambda_0,0)$ and radius $R$. This means that the branches $\mathcal{C}_{\varepsilon}$ tend to
$\mathcal{C}_0$ locally in $\mathbb{R} \times X$. \vspace{0.3cm} \\
As it is stated in \cite{dai16} (see also the references therein) the above Lemma contains a gap. However, this gap is removable. In any case we
refer to Whyburn's Lemma in the sense of Theorem \cite[Theorem 2.1]{dai16}. \vspace{0.3cm} \\
Finally in Section \ref{secmc}, we state as a theorem the existence of global branches for the Mean curvature equation.
\begin{theorem} \label{thmgeneralcase}
The principal eigenvalue $\lambda_{0} > 0$ of (\ref{eq1.2}) is a bifurcation point of the problem (\ref{mc}). More precisely, the
closure of the set
\[
\mathcal{C}_0=\{ (\lambda ,u) \in \mathbb{R} \times
X : (\lambda,u)\;\; \mbox{solves}\;\;
(\ref{mc}),\; u \not\equiv 0 \},
\]
possesses a maximal continuum (i.e. connected branch) of solutions,\ $\mathcal{C}_0$,\ such that\ $(\lambda_{0},0) \in
\mathcal{C}_0$\ and\ $\mathcal{C}_0$\ is unbounded in $\mathbb{R} \times X$. Moreover, there are
maximal connected subsets $\mathcal{C}^+_0$, $\mathcal{C}^-_0$ of
$\mathcal{C}_0$ containing $(\lambda_0,0)$ in their closure, such
that $u(x) > (<)0$, for every $x \in \mathbb{R}^3$, if $(\lambda ,u) \in
\mathcal{C}^{+(-)}_0$, respectively.
\end{theorem}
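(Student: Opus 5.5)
The plan is to obtain Theorem \ref{thmgeneralcase} from Theorem \ref{global} by letting $\theta \downarrow 0$, using Whyburn's Lemma (Theorem \ref{whyburn}) in the same way it is used for $\varepsilon \downarrow 0$. For each fixed $\theta>0$, Theorem \ref{global} gives an unbounded continuum $\mathcal{C}_0^\theta \subset \mathbb{R}\times X$ of solutions of (\ref{eq1.1}) with $(\lambda_0,0)\in\mathcal{C}_0^\theta$, together with the positive and negative sub-branches $\mathcal{C}^{\pm,\theta}_0$. Fix a sequence $\theta_n\downarrow 0$ and $R>0$, and consider the connected sets
\[
A_n := \overline{\mathcal{C}_0^{\theta_n}\cap B_R(\lambda_0,0)}^{\,c},
\]
meaning the component of $\mathcal{C}_0^{\theta_n}\cap \overline{B_R(\lambda_0,0)}$ that contains $(\lambda_0,0)$. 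Because $\mathcal{C}_0^{\theta_n}$ is unbounded, a standard argument on continua shows that $A_n$ meets $\partial B_R(\lambda_0,0)$. Every $A_n$ contains $(\lambda_0,0)$, so $\liminf A_n\neq\emptyset$.

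The key step, and the main obstacle, is showing that $\bigcup_n A_n$ is relatively compact in $\mathbb{R}\times X$. The difficulty is that the cut-off $u^2/(u^2+\theta)$ converges to $1$ only pointwise off the zero set of $u$, not uniformly. I would attack this directly. Along the branches we are in $\overline{B_R}$, so $\|u\|_X\le R$. The embedding $X\hookrightarrow C^4$ then bounds $u$, $\nabla u$ and their derivatives up to order $3$ uniformly. Since $u^2/(u^2+\theta)\in[0,1]$, the term $H(\lambda,u)\,u^2/(u^2+\theta)$ is bounded in $L^2$, and I would upgrade this to a bound in weighted spaces. Writing the equation as $-\Delta u = \lambda h u + \tilde H_\theta$ and inverting $-\Delta$ on radial functions, I would get equicontinuity and decay at infinity. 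The decay comes from $h=(1+|x|^2)^{-2}$ together with the compactness of the radial embeddings. This gives relative compactness in $D^{1,2}_r$ and in local $C^k$ norms.

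The awkward point is compactness in the full $X$-norm, since we cannot appeal to the $\varepsilon$-regularization. Here I would use the quasilinear structure. The second-order term in $H$, namely $-\tfrac12\nabla|\nabla u|^2\cdot\nabla u/(1+|\nabla u|^2)$, can be absorbed into the principal part. This turns (\ref{eq1.1}) into a uniformly elliptic equation for solutions with $|\nabla u|\le C(R)$, with ellipticity constants independent of $\theta$. Schauder and $H^k$ regularity, bootstrapped up to order $6$, then give bounds in a stronger norm. Those bounds yield compactness in $X$, provided the derivatives of $u^2/(u^2+\theta)$ are controlled. On the positive branches $u>0$ everywhere, but the derivatives of the cut-off still blow up where $u$ is small relative to $\theta$, that is, at infinity. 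I would control this by writing $u^2/(u^2+\theta)\cdot|\nabla u|^2$ and using $|\nabla u|\lesssim u$-type decay comparison for radial positive solutions, obtained from the comparison with the decay rate of $u_0$. If this fails, I would fall back on the Proposition \ref{mainprop}-type argument used in Section 3.

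Given relative compactness, Theorem \ref{whyburn} shows that $\limsup A_n$ is closed and connected and contains $(\lambda_0,0)$. Limits of solutions solve (\ref{mc}): on $\{u\ne0\}$ the cut-off tends to $1$, and for positive or negative limits this set is all of $\mathbb{R}^3$. The limit set also meets $\partial B_R$, since it contains limits of points of $A_n\cap\partial B_R$. It cannot collapse to the trivial solution line away from $\lambda_0$. The reason is that near $(\lambda,0)$ the only bifurcation point available is the simple eigenvalue $\lambda_0$, and any other would be an eigenvalue $\lambda^*$, which Section 2 pushes to infinity. As $R$ is arbitrary, a diagonal argument yields a continuum of (\ref{mc}) containing $(\lambda_0,0)$ that meets every sphere, so it is unbounded. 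For the sign statement, I would apply the same construction to $\mathcal{C}^{\pm,\theta_n}_0$. Limits are $\ge0$, and the strong maximum principle for the now uniformly elliptic linearized equation, with $u\not\equiv0$, gives $u>0$ on $\mathbb{R}^3$. The negative case follows from the symmetry $u\mapsto -u$ of (\ref{mc}).
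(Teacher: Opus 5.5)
\paragraph{Main gap: the degenerate case $u_n \rightharpoonup 0$.}
You follow the paper's skeleton: Whyburn's Lemma along $\theta_n\downarrow 0$, $(\lambda_0,0)\in A_n$ for the $\liminf$, then $R\to\infty$. Taking the component of $\mathcal{C}^{\theta_n}_0\cap\bar B_R(\lambda_0,0)$ through $(\lambda_0,0)$ is more careful than the paper's $\bar B_R\cap\mathcal{C}_{\theta_n}$. The genuine gap is the degenerate case $u_n\rightharpoonup 0$, and this is exactly what the paper's proof is devoted to.

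Since $u\equiv 0$ solves (\ref{mc}) for every $\lambda$, ``limits of solutions solve (\ref{mc})'' gives no information there. A priori $\limsup A_n$ could contain a segment of the trivial line, with its points on $\partial B_R$ being $(\lambda_0\pm R,0)$. In that case no unbounded continuum of nontrivial solutions follows.

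Your reason for excluding this is wrong. The linearization at $u=0$ of (\ref{eq1.1}), for every $\theta$, and of (\ref{mc}) is $-\Delta u=\lambda h u$. Its higher radial eigenvalues $\lambda^{(2)}_0<\lambda^{(3)}_0<\cdots$ are fixed and finite, and each is itself a potential bifurcation point. Section 2 concerns the $\varepsilon$-regularized problem (\ref{eq1.4}), which no longer enters once Theorem \ref{global} is used. Even there, its eigenvalues converge to those of (\ref{eq1.2}) by min--max rather than escaping to infinity. Moreover, you give no argument for the weaker claim that trivial limits can only sit at eigenvalues.

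The paper's argument runs as follows.
\begin{itemize}
\item[(a)] Set $\tilde u_n=u_n/\|u_n\|_{D^{1,2}(\mathbb{R}^3)}$. Since $\frac{u_n}{u_n^2+\theta_n}\tilde u_n^2=\frac{u_n^2}{u_n^2+\theta_n}\cdot\frac{\tilde u_n}{\|u_n\|_{D^{1,2}(\mathbb{R}^3)}}$, the bound (\ref{H0}) holds uniformly in $\theta_n$. Hence the nonlinear term in (\ref{eqg.5}) tends to $0$.
\item[(b)] The compact embedding (\ref{Dcmim}) then gives $\lambda_*\int h\,\tilde u_*^2=1$. So $\lambda_*\ne 0$ and $\tilde u_*\not\equiv 0$.
\item[(c)] Since $\tilde u_*\ge 0$ is an eigenfunction of (\ref{eq1.2}), this forces $(\lambda_*,\tilde u_*)=(\lambda_0,\tilde u_0)$.
\item[(d)] Applying the weighted eigenvalue argument of Lemma \ref{lemgc1} to $-\Delta u_n=\lambda_n g_n u_n$ yields $u_n/\|u_n\|_X\to u_0$ in $X$. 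Hence $u_n\to 0$ in $X$ and $\lambda_n\to\lambda_0$.
\end{itemize}
This uses the sign of $u_n$. So your construction has to be run on $\mathcal{C}^{\pm,\theta_n}_0$ from the outset, not on components of $\mathcal{C}^{\theta_n}_0$ with the sign handled afterwards.

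\paragraph{Second gap: compactness in $X$.}
Your compactness step in $X$ is also not yet a proof. Bootstrapping elliptic regularity up to order $6$ only bounds $u_n$ in $X$. Compactness in $X$ needs two more things:
\begin{itemize}
\item[(i)] a uniform bound one derivative higher;
\item[(ii)] uniform smallness of $\int_{|x|>\rho}|D^k u_n|^2$ for $1\le k\le 6$ as $\rho\to\infty$.
\end{itemize}
The decay you obtain at the $D^{1,2}$ level does not give (ii).

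Your observation that $|\nabla u|\le C|u|$ forces $|\nabla_x(u^2/(u^2+\theta))|\le C/2$ uniformly in $\theta$ is a good one. However, the bootstrap needs $x$-derivatives of the cut-off of every order it uses. That requires bounds $|D^\beta u_n|\le C|u_n|$ for all such $\beta$, uniformly along the sequence, including as $u_n\to 0$ near $(\lambda_0,0)$. Such bounds are impossible near interior zeros, which again restricts you to sign-definite branches.

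None of this is shown, and you defer it to a fallback on Proposition \ref{mainprop}. As written, both the relative compactness of $\bigcup_n A_n$ and the identification of trivial limits with $(\lambda_0,0)$ are missing.
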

\textbf{Notation}  Throughout this work we will consider the weighted space $L^2_g (\mathbb{R}^3)$, with inner product
\[
<\phi,\psi>_{L^2_g (\mathbb{R}^3)} = \int_{\mathbb{R}^3} g(x)\, \phi\, \psi\, dx,
\]
for any $\phi,\; \psi \in C^{\infty}_0 (\mathbb{R}^3)$, respectively.
%
%
\section{Eigenvalue problems} \label{seceigen}

We remind that the space\ $D^{1,2}(\mathbb{R}^3)$, is defined as the closure of the $C^{\infty}_{0}(\mathbb{R}^3)$- functions with respect to the norm
\[
||u||^2_{D^{1,2}(\mathbb{R}^3)}=\int_{\mathbb{R}^3} {| \nabla u |}^2\, dx.
\]
It can be shown that
\[
D^{1,2}(\mathbb{R}^3)= \left\{u \in L^{6}(\mathbb{R}^3): |\nabla u | \in L^2(\mathbb{R}^3) \right\}.
\]
Equivalently, we may define $D^{1,2}(\mathbb{R}^3)$ as the set of $L^1_{loc} (\mathbb{R}^3)$- functions $f$ with $|\nabla f| \in L^2 (\mathbb{R}^3)$ and $f$ vanishes at infinity. Moreover, there exists\ $K>0$\ such that
\begin{equation} \label{Dcnim}
||u||^2_{D^{1,2}(\mathbb{R}^3)} \geq K\, ||u||^2_{L^6(\mathbb{R}^3)},
\end{equation}
for all $u \in D^{1,2}(\mathbb{R}^3)$. In addition, the imbedding
\begin{equation} \label{Dcmim}
D^{1,2}(\mathbb{R}^3) \hookrightarrow L^2_g (\mathbb{R}^3),
\end{equation}
is compact, for every $g \in L^{3/2} (\mathbb{R}^3)$. \vspace{0.2cm} \\
Standard regularity results imply the following:
\begin{lemma} \label{regularity}
Let $u \in X$, be a solution of the problem
(\ref{eq1.3}). Then, $u$ is at least a $C^{6}(\mathbb{R}^3)$-function, such that $D^{\alpha} u \in H^6 (\mathbb{R}^3)$, $|\alpha|=1$.
\end{lemma}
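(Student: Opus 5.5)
Since $\varepsilon\,<u,\phi>_X$ is a sixth-order term, a solution of (\ref{eq1.3}) in $X$ is a weak solution of a radial elliptic equation of order twelve,
\[
\varepsilon \Big( \sum_{1\le|\alpha|\le 6} (-1)^{|\alpha|} D^{2\alpha} u - \Delta u \Big) - \Delta u = \lambda h u + H(\lambda,u)\frac{u^2}{u^2+\theta},
\]
understood in the distributional sense. I would therefore prove Lemma \ref{regularity} by a bootstrap on the first derivatives $v = D^{\beta}u$, $|\beta|=1$.

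\textbf{Step 1: regularity of the right-hand side.} Since $u\in X \hookrightarrow C^4(\mathbb{R}^3)$ and $D^\beta u\in H^5$, the factor $\frac{u^2}{u^2+\theta}$ is smooth and bounded with bounded derivatives, because $\theta>0$. The term $H(\lambda,u)$ contains at most second derivatives of $u$, and these enter through smooth bounded functions of $\nabla u$, such as $(1+|\nabla u|^2)^{-1}$. Since $D^2u\in H^4$ and $H^s$ is an algebra for $s>3/2$, the Moser composition estimates give that the right-hand side $F$ has $\nabla F\in H^3$. For the linear term, $h$ is smooth with all derivatives bounded and decaying, so $\nabla(hu)\in H^4$. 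Here one uses $u\in L^6$, $\nabla u\in H^5$, and the fact that $h$ and $\nabla h$ lie in $L^3$, so that $(\nabla h)u\in L^2$.

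\textbf{Step 2: elliptic gain in Fourier variables.} Differentiate the equation once, in the weak sense, by testing with $\phi = D^\beta\psi$. Then $v=D^\beta u$ satisfies $P_\varepsilon v = D^\beta F$, with symbol
\[
p_\varepsilon(\xi)=\varepsilon \sum_{1\le|\alpha|\le6}\xi^{2\alpha}+(1+\varepsilon)|\xi|^2 .
\]
For $|\xi|\ge1$ this symbol satisfies $p_\varepsilon(\xi)\ge c_\varepsilon(1+|\xi|^2)^6$, so on high frequencies $\hat v = \widehat{D^\beta F}/p_\varepsilon$ gains twelve derivatives. Low frequencies are harmless because $v\in L^2$ already. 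From $D^\beta F\in H^{2}$ we obtain $v\in H^{14}$ at high frequency, hence $v\in H^6$ with room to spare. Iterating is not even needed for the stated order. Sobolev embedding ($H^{s}\subset C^{s-2}$ in $\mathbb{R}^3$) then gives $\nabla u\in C^{5}$, so $u\in C^6$.

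\textbf{Main obstacle.} The hardest step is justifying Step 1 for the quasilinear term. One must check that $H(\lambda,u)\frac{u^2}{u^2+\theta}$ lies in $H^{k}$, with a gradient in $H^k$, using only the a priori information $u\in X$. The pieces $\nabla|\nabla u|^2\cdot\nabla u$ are products of $H^4$ and $H^5$ functions, and these are in $H^4$ by the algebra property. The terms multiplied by $u$ itself, which lies only in $L^6\cap C^4$ and not in $L^2$, need care. I would handle them by pairing them with decaying factors: either $h$, or $|\nabla u|^2\in H^5$, together with $u\in L^\infty$. This is sufficient to place them in $L^2$-based spaces.
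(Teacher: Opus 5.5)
Your overall route is exactly the ``standard regularity'' the paper invokes without proof. You read (\ref{eq1.3}) as a twelfth-order elliptic equation, show $\nabla F\in H^3$ by algebra and Moser estimates, and then gain twelve derivatives on the Fourier side. Your Step 1 estimates are fine.

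However, the passage from (\ref{eq1.3}) to Step 2 fails as written. $X$ is the completion of \emph{radial} $C_0^\infty$ functions, so (\ref{eq1.3}) is only available for radial test functions. The function $\phi=D^\beta\psi$ is never radial, so it is not an admissible test function. Worse, the twelfth-order equation you display is in general \emph{false} in $\mathcal{D}'(\mathbb{R}^3)$. For $k\ge 2$ the symbol $\sum_{|\alpha|=k}\xi^{2\alpha}$ is not rotation invariant; for $k=2$ it equals $|\xi|^4-\sum_{i<j}\xi_i^2\xi_j^2$. Hence $Pu$ has a nonradial component whenever $u\not\equiv 0$ is radial, while $F$ is radial. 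So the pointwise identity $p_\varepsilon\hat v=\widehat{D^\beta F}$, on which Step 2 rests, is not available.

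The repair is short.
\begin{itemize}
\item By Plancherel, $\int D^\alpha u\,D^\alpha\phi\,dx=\int \xi^{2\alpha}\,\hat u\,\overline{\hat\phi}\,d\xi$. For radial $u$ and $\phi$ the factor $\hat u\,\overline{\hat\phi}$ is radial, so $\xi^{2\alpha}$ may be replaced by its spherical mean $c_\alpha|\xi|^{2|\alpha|}$, with $c_\alpha>0$.
\item Thus (\ref{eq1.3}) says $\langle \tilde P_\varepsilon u-F,\phi\rangle=0$ for all radial $\phi\in C_0^\infty$. Here $\tilde P_\varepsilon=\varepsilon\sum_{k=1}^{6}c_k(-\Delta)^k+(1+\varepsilon)(-\Delta)$ with $c_k=\sum_{|\alpha|=k}c_\alpha$, a rotation-invariant operator.
\item $\tilde P_\varepsilon u-F$ is a radial distribution. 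Its action on any $\psi\in C_0^\infty$ therefore equals its action on the spherical average of $\psi$, which lies in $X$. Hence $\tilde P_\varepsilon u=F$ holds in $\mathcal{D}'$, and by density in $\mathcal{S}'$.
\item $\tilde P_\varepsilon$ commutes with $D^\beta$. Its symbol satisfies $\tilde p_\varepsilon(\xi)\ge c_\varepsilon(1+|\xi|^2)^6$ for $|\xi|\ge 1$, since the power-mean inequality gives $c_6\ge 3^{-5}$.
\end{itemize}
With this, your Step 2 goes through verbatim with $\tilde p_\varepsilon$ in place of $p_\varepsilon$. It yields much more than the $D^\beta u\in H^6$ and $u\in C^6$ claimed in the lemma.

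A minor point: your Step 1 actually gives $D^\beta F\in H^3$, not just $H^2$. Either is enough.
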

Another well known result concerns the eigenvalue problems (\ref{eq1.2}) and (\ref{eq1.5}):
\begin{lemma} \label{princeig}
Problems (\ref{eq1.2}) and (\ref{eq1.5}) admit principal eigenvalues $\lambda_0$ and $\lambda_{n,0}$, respectively, given by
\begin{equation}\label{varchar}
\lambda_0 = \inf_{0 \not\equiv u \in C^{\infty}_0 (\mathbb{R}^3)} \frac{\int_{\mathbb{R}^3} |\nabla
u|^2\, dx}{\int_{\mathbb{R}^3} h\, u^2\, dx},\;\;\;\;\;\; \lambda_{n,0} = \inf_{0 \not\equiv u \in C^{\infty}_0 (\mathbb{R}^3)} \frac{\int_{\mathbb{R}^3}|\nabla u|^2\, dx}{\int_{\mathbb{R}^3} g_n(x)\, u^2\, dx},
\end{equation}
which are simple and the corresponding normalized eigenvalues $u_0$ and $u_{n,0}$, respectively, belong to $D^{1,2} (\mathbb{R}^3)$ and they are the
only positive eigenfunctions of (\ref{eq1.2}) and (\ref{eq1.5}). Moreover, $u_0$ and $u_{n,0}$ belong also to $X$.
\end{lemma}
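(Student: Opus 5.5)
The plan is to treat the two problems in the same way, taking the weight $g$ to be either $h$ or $g_n$. Both lie in $L^{3/2}(\mathbb{R}^3)$ and have positive part of nonzero measure; for $h$ this holds because $h(x)\sim |x|^{-4}$ at infinity.

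\textbf{Existence and variational characterization.} Consider the quotient $Q(u)=\int|\nabla u|^2/\int g u^2$ on $\{u\in D^{1,2}(\mathbb{R}^3): \int g u^2>0\}$. This set is nonempty, since $g^+$ has positive measure.
\begin{itemize}
\item By (\ref{Dcnim}) and H\"older's inequality, $\int g u^2\le \|g\|_{L^{3/2}}\|u\|_{L^6}^2\le C\|u\|^2_{D^{1,2}}$, so the infimum $\lambda$ in (\ref{varchar}) is positive. By density of $C_0^\infty$ in $D^{1,2}$, the infimum over $C_0^\infty$ equals the infimum over $D^{1,2}$.
\item Take a minimizing sequence normalized by $\int g u_k^2=1$. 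It is bounded in $D^{1,2}$, so a subsequence converges weakly.
\item By the compact imbedding (\ref{Dcmim}), $\int g u_k^2\to\int g u^2=1$.
\item By weak lower semicontinuity of the norm, the limit $u$ is a minimizer.
\item Replacing $u$ by $|u|$ keeps it a minimizer, since $|\nabla|u||=|\nabla u|$ a.e.
\item The Euler--Lagrange equation gives $-\Delta u=\lambda g u$ weakly.
\end{itemize}

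\textbf{Positivity and simplicity.} Elliptic regularity applies because $g$ is smooth and bounded, so $u\ge0$ is a smooth solution of $-\Delta u+\lambda g^-u=\lambda g^+u\ge0$. The strong maximum principle (Harnack) then gives $u>0$ everywhere, unless $u\equiv0$, which the normalization excludes.

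For simplicity of $\lambda$, argue by contradiction. Suppose $v$ is another eigenfunction for $\lambda$ that is not a multiple of $u$.
\begin{itemize}
\item Choose $c$ so that $\int g u(v-cu)=0$ and set $w=v-cu$. Then $w$ is also a minimizer.
\item Hence $|w|$ and $w^\pm$ are minimizers, so each is either $0$ or strictly positive.
\item This forces $w$ to have one sign.
\item But a positive minimizer $w$ satisfies $\int g u w=\lambda^{-1}\int\nabla u\cdot\nabla w=\lambda^{-1}\int|\nabla u|^2\cdot(\text{const})$. Testing against $u$, this equals $\int gu w$ by the equation, and is therefore nonzero, because $\int\nabla u\cdot\nabla w=\lambda\int g u w$ and the positivity of both gives $\int g^{+}uw$ dominance.
\end{itemize}
This last step is delicate for sign-changing $g$. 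I would replace it by the Picone identity: for $u>0$ and any eigenfunction $v\ge0$,
\[\int|\nabla v|^2-\int\nabla u\cdot\nabla(v^2/u)\ge0,\]
with equality iff $v/u$ is constant. Applying it directly to two positive eigenfunctions yields proportionality. The same Picone argument shows that any positive eigenfunction, for any eigenvalue $\mu$, must have $\mu=\lambda$ and be a multiple of $u_0$. To see this, test the equation for $v$ against $u^2/v$ (after an approximation making the test function admissible in $D^{1,2}$). This gives $\lambda\int g u^2\ge\mu\int g u^2$, and symmetrically the reverse inequality, so the only positive eigenfunctions are the normalized $u_0$ and $u_{n,0}$.

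\textbf{Membership in $X$.} This is the main obstacle: showing $D^\alpha u\in H^5(\mathbb{R}^3)$ for $|\alpha|=1$, that is, $\nabla u,\dots,D^6u\in L^2$. Radial symmetry holds because the weights are radial; the minimizer can be symmetrized, or simplicity can be applied to $u\circ R$. We already have $\nabla u\in L^2$. The equation gives $\Delta u=-\lambda g u$, with $g u\in L^2$ since $g\in L^3$ (true for $h$, and assumed via $g_n\in H^4\cap L^{3/2}$) and $u\in L^6$. The $L^2$ theory of the Laplacian on $\mathbb{R}^3$ (Fourier multipliers $\xi_i\xi_j/|\xi|^2$) then gives $D^2u\in L^2$. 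Iterating, $D^{k+2}u$ is controlled by $D^k(gu)$, which by Leibniz and the assumed bounds on the derivatives of $g$ (up to order $4$, and all orders for $h$) is a sum of products $D^jg\,D^{k-j}u$ lying in $L^2$. For these terms I would use $u\in L^\infty$ (from $H^2_{loc}$ plus radial decay, e.g.\ Strauss-type $|u(x)|\le C|x|^{-1/2}\|\nabla u\|_2$) and the previously obtained $D^{i}u\in L^2\cap L^6$. Running this up to $k=4$ gives $D^6u\in L^2$, hence $u\in X$.
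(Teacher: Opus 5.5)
Your proof is correct in substance, but it takes a different route from the paper. The paper proves none of the spectral assertions itself. Existence, positivity, simplicity and uniqueness of the positive eigenfunction are quoted from \cite{bro94}, and membership in $X$ is a one-line bootstrap (``the derivatives of $h u_0$ and $g_n u_{n,0}$ up to order $4$ lie in $L^2$, hence $u_0,u_{n,0}\in X$''). You instead give a self-contained argument (direct method, strong maximum principle, Picone) and spell out the bootstrap. What it needs is the $H^4$ assumption on $g_n$, which gives $D^jg_n\in L^\infty$ for $j\le 2$, $D^3g_n\in H^1\subset L^3$ and $D^4g_n\in L^2$. Paired with $u\in L^\infty$ and $D^iu\in L^2\cap L^6$, this puts $D^k(g_nu)$ in $L^2$ for $k\le 4$; the $L^{3/2}$ bounds on derivatives cited in the paper would not do this on their own.

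You also supply two things the paper omits. First, you restrict the Rayleigh quotient to $\int g u^2>0$; for a sign-changing $g_n$ the infimum in (\ref{varchar}) over all $u\not\equiv 0$ is $-\infty$. Second, you address radial symmetry, which is needed because $X$ consists of radial functions. Use your second option there, simplicity applied to $u\circ R$; Schwarz symmetrization is only available for radially decreasing weights such as $h$.

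Two corrections in the spectral part. The fourth bullet of your first simplicity attempt is not valid for indefinite weights, and neither it nor the orthogonalization is needed. Choose $c$ so that $w=v-cu$ vanishes at some point. If $w\not\equiv0$, then $\int|\nabla w|^2=\lambda\int g w^2$ forces $\int g w^2>0$, so $w$ is a minimizer. Then $|w|$ is a nonnegative minimizer with a zero, contradicting the strong maximum principle.

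Picone is then needed only for uniqueness of positive eigenfunctions, and there ``symmetrically the reverse inequality'' hides a sign condition. The second Picone inequality reads $(\mu-\lambda)\int g v^2\ge 0$, which gives $\mu\ge\lambda$ only when $\int g v^2>0$, i.e.\ $\mu>0$. This reflects an imprecision in the statement itself rather than in your method. If $g_n^-\not\equiv 0$, minimizing $\int|\nabla u|^2$ under the constraint $\int g_nu^2=-1$ produces a negative principal eigenvalue with a positive eigenfunction. So ``the only positive eigenfunction'' must be read among positive eigenvalues, which is automatic for $h>0$ or for $g_n\ge 0$.
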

\emph{Proof} The existence and the properties of the eigenvalues are proved in \cite{bro94}. We prove that these eigenvalues belong also in $X$; Observe that $g_n \in C^{4}(\mathbb{R}^3) \cap H^{4} (\mathbb{R}^3)$ and $h \in C^{\infty} \cap H^{4} (\mathbb{R}^3)$. In addition, the derivatives of order up to $4$ for both $h$ and $g_n$ belong to $L^{3/2} (\mathbb{R}^3)$. Then, the derivatives of $h\, u_0$ and $g_n\, u_n$, up to order $4$, belong to $L^2 (\mathbb{R}^3)$. Thus, from (\ref{eq1.2}) and (\ref{eq1.5}) we obtain that $u_0$ and $u_{n,0}$ belong also to $X$.
$\blacksquare$ \vspace{0.2cm} \\
Next we consider the eigenvalue problem
\begin{equation}\label{utilde}
<u,v>_X = \lambda\, <u,v>_{L^2_{g_n}},
\end{equation}
for $u$, $v$ in $X$. We denote the eigenpairs of (\ref{utilde}) as $(\tilde{\lambda}^{(i)}_n, \tilde{u}^{(i)}_n)$, $i=1,2,...$. Standard spectral theory for compact
self-adjoint operators imply the existence of these eigenpairs in $\mathbb{R} \times X$ and state their properties. In the sequel, we assume that $\{ \tilde{u}^{(i)}_n \}$
consists an orthonormal basis of $L^2_{g_n} (\mathbb{R}^3)$.
%
%
%
\subsection{Proof of Theorem \ref{seigen}} \label{prseigen}
We give the proof for the more general case of the problem (\ref{eqsweight}). Throughout, this Subsection we assume that $n$, thus $g_n$, is fixed.
\begin{lemma} \label{lem2.1}
Assume that there exist $(v_{n,\varepsilon},\mu_{n,\varepsilon})$ eigenpairs of (\ref{eqsweight}) in $X$, i.e.
\begin{equation}\label{sim1}
\varepsilon\, <v_{n,\varepsilon}, \phi>_X + <v_{n,\varepsilon}, \phi>_{D^{1,2} (\mathbb{R}^3)} - \mu_{n,\varepsilon} <v_{n,\varepsilon}, \phi>_{L^2_{g_n} (\mathbb{R}^3)}=0,
\end{equation}
for any $\phi \in X$, such that $\mu_{n,\varepsilon} \to \lambda_{n,0}$, as $\varepsilon \to 0$. Then, $v_{n,\varepsilon} \to u_{n,0}$, in $X$.
\end{lemma}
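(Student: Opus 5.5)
The plan is to normalize the eigenfunctions, obtain convergence first in $D^{1,2}(\mathbb{R}^3)$ by compactness and the simplicity of $\lambda_{n,0}$, and then upgrade to convergence in $X$. The upgrade goes through the Fourier representation of (\ref{sim1}) combined with a regularity bootstrap. I normalize $v_{n,\varepsilon}$ in the same way as $u_{n,0}$, that is $||v_{n,\varepsilon}||_{L^2_{g_n}}=1$. Since the eigenvalue of interest is principal, I also fix the sign by requiring $\int g_n v_{n,\varepsilon} u_{n,0}\,dx \geq 0$.

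\emph{Step 1 (convergence in $D^{1,2}$).} Taking $\phi=v_{n,\varepsilon}$ in (\ref{sim1}) gives
\[
\varepsilon ||v_{n,\varepsilon}||_X^2+||v_{n,\varepsilon}||^2_{D^{1,2}}=\mu_{n,\varepsilon}.
\]
Hence $v_{n,\varepsilon}$ is bounded in $D^{1,2}(\mathbb{R}^3)$ and $\varepsilon||v_{n,\varepsilon}||_X^2$ is bounded. Along a subsequence, $v_{n,\varepsilon}\rightharpoonup w$ in $D^{1,2}$. By the compact imbedding (\ref{Dcmim}) we get $v_{n,\varepsilon}\to w$ in $L^2_{g_n}$, so $||w||_{L^2_{g_n}}=1$. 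For a fixed radial $\phi\in C_0^\infty$ we have
\[
|\varepsilon<v_{n,\varepsilon},\phi>_X|\le \sqrt{\varepsilon}\,(\varepsilon||v_{n,\varepsilon}||_X^2)^{1/2}||\phi||_X\to 0.
\]
Therefore $w$ is a (radial) weak solution of (\ref{eq1.5}) with $\lambda=\lambda_{n,0}$. By Lemma \ref{princeig} and the sign normalization, $w=u_{n,0}$, and uniqueness of the limit gives convergence of the whole family. Finally, lower semicontinuity yields
\[
\lambda_{n,0}=||u_{n,0}||^2_{D^{1,2}}\le\liminf ||v_{n,\varepsilon}||^2_{D^{1,2}}\le \limsup \mu_{n,\varepsilon}=\lambda_{n,0}.
\]
This gives strong convergence in $D^{1,2}$ and, as a by-product, $\varepsilon||v_{n,\varepsilon}||^2_X\to 0$.

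\emph{Step 2 (Fourier representation).} The $X$ inner product has symbol $m(\xi)=|\xi|^2+\sum_{1\le|\alpha|\le 6}\xi^{2\alpha}$, which is comparable to $|\xi|^2(1+|\xi|^{10})$. Equation (\ref{sim1}) reads
\[
(\varepsilon m(\xi)+|\xi|^2)\hat v_{n,\varepsilon}=\mu_{n,\varepsilon}\widehat{g_nv_{n,\varepsilon}},
\]
while $|\xi|^2\hat u_{n,0}=\lambda_{n,0}\widehat{g_nu_{n,0}}$. Set $z_\varepsilon=v_{n,\varepsilon}-u_{n,0}$. Writing $m=|\xi|^2(1+p(\xi))$, I split
\[
\hat z_\varepsilon=\frac{\mu_{n,\varepsilon}\widehat{g_nz_\varepsilon}+(\mu_{n,\varepsilon}-\lambda_{n,0})\widehat{g_nu_{n,0}}}{|\xi|^2(1+\varepsilon p)}+\hat u_{n,0}\Big(\frac{1}{1+\varepsilon p}-1\Big).
\]
The last term tends to $0$ in $X$ by dominated convergence, since $\int m|\hat u_{n,0}|^2<\infty$ by Lemma \ref{princeig}. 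The second piece of the fraction tends to $0$ because $g_nu_{n,0}$ has the regularity used in Lemma \ref{princeig}. Using $1/(1+\varepsilon p)\le1$, it therefore suffices to show that
\[
\int \frac{m(\xi)}{|\xi|^4}|\widehat{g_nz_\varepsilon}|^2d\xi\to0.
\]
This integral is bounded by $||g_nz_\varepsilon||^2_{D^{-1,2}}+||g_nz_\varepsilon||^2_{\dot H^{4}}$, up to constants.

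\emph{Step 3 (bootstrap; main obstacle).} The low-frequency part is controlled by duality, since $||g_nz_\varepsilon||_{D^{-1,2}}\le C||g_n||_{L^{3/2}}||z_\varepsilon||_{L^6}\to0$ by Step 1 and (\ref{Dcnim}). The difficulty is the high-frequency part, because controlling $g_nz_\varepsilon$ in $H^4$ requires derivatives of $z_\varepsilon$ that we do not yet control. I will bootstrap, gaining two derivatives at each step. The same representation restricted to $|\xi|\ge1$ gives
\[
||\nabla z_\varepsilon||_{H^{k+1}}\le C\big(||g_nz_\varepsilon||_{H^k}+||g_nz_\varepsilon||_{D^{-1,2}}\big)+o(1).
\]
By Leibniz' rule, with derivatives of $g_n$ bounded and in $L^{3/2}$ (hypotheses of Theorem \ref{seigen}), we have
\[
||g_nz_\varepsilon||_{H^k}\le C\big(||z_\varepsilon||_{L^6}+||\nabla z_\varepsilon||_{H^{k-1}}\big).
\]
Here the zero-order terms $(D^\beta g_n)z_\varepsilon$ are estimated in $L^2$ by H\"older with $D^\beta g_n\in L^{3}$, interpolating $L^{3/2}$ and $L^\infty$. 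Starting from $k=0$, which is Step 1, and iterating $k=0,2,4$, I reach $\nabla z_\varepsilon\to0$ in $H^5$, that is $z_\varepsilon\to0$ in $X$. The delicate point I expect is checking that the weight hypotheses on $g_n$ (derivatives up to order $4$ in $L^{3/2}\cap H^4$) suffice at the last step $k=4$. If they do not, I will instead use the $o(1)$ smallness $\varepsilon||v_{n,\varepsilon}||_X^2\to0$ from Step 1 to absorb the top-order term.
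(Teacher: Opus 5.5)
Your proof is correct, but it takes a genuinely different route from the paper.

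\textbf{The paper's route.} It normalizes $v_{n,\varepsilon}$ in $X$ and splits $v_{n,\varepsilon}=a_{n,\varepsilon}u_{n,0}+\xi_{n,\varepsilon}$ with $\xi_{n,\varepsilon}\perp u_{n,0}$ in $D^{1,2}(\mathbb{R}^3)$. Testing the equation for $\xi_{n,\varepsilon}$ with $\xi_{n,\varepsilon}$ gives (\ref{sim4}). By the spectral gap (\ref{sim5a}), the quantity $\|\xi_{n,\varepsilon}\|^2_{D^{1,2}}-\mu_{n,\varepsilon}\|\xi_{n,\varepsilon}\|^2_{L^2_{g_n}}$ is nonnegative. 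So one may divide by $\varepsilon$ and drop it, which yields $\|\xi_{n,\varepsilon}\|_X^2\le -a_{n,\varepsilon}\langle u_{n,0},\xi_{n,\varepsilon}\rangle_X$. The right side tends to $0$ once the gap also shows $\xi_{n,\varepsilon}\rightharpoonup 0$ in $X$.

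\textbf{Comparison.} The paper's argument is purely Hilbertian. It uses only $u_{n,0}\in X$, the compact embedding into $L^2_{g_n}$ and the gap $\lambda^{(2)}_{n,0}>\lambda_{n,0}$. It never touches the concrete form of $\langle\cdot,\cdot\rangle_X$ or derivatives of $g_n$, and the same decomposition is reused in Proposition \ref{mainprop}. Your route needs no second eigenvalue, only simplicity. You get $D^{1,2}$ convergence by lower semicontinuity, with the sign fixed explicitly, which the paper leaves tacit. You then upgrade through the Fourier multiplier and an elliptic bootstrap. This gives quantitative control of $\|v_{n,\varepsilon}-u_{n,0}\|_X$ in terms of $\|z_\varepsilon\|_{L^6}$, $|\mu_{n,\varepsilon}-\lambda_{n,0}|$ and a tail of $\hat u_{n,0}$. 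The price is reliance on the specific multiplier structure and on regularity of $g_n$.

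\textbf{Minor corrections.} First, $\varepsilon m+|\xi|^2=|\xi|^2(1+\varepsilon+\varepsilon p)$, not $|\xi|^2(1+\varepsilon p)$. Second, $\sum_\alpha\xi^{2\alpha}$ is not radial, so on the radial space you should use its spherical average. Both slips are harmless.

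\textbf{The step you flagged at $k=4$.} Your justification fails there, because the hypotheses give $D^\beta g_n\in L^\infty$ only for $|\beta|\le 2$ (from $H^4\hookrightarrow C^2_b$), not for $|\beta|=3,4$. The estimate $\|g_nz_\varepsilon\|_{H^4}\le C(\|z_\varepsilon\|_{L^6}+\|\nabla z_\varepsilon\|_{H^3})$ still holds with different pairings:
\begin{itemize}
\item $D^3g_n\in H^1\subset L^3$, paired with $z_\varepsilon,\nabla z_\varepsilon\in L^6$;
\item $D^4g_n\in L^2$, paired with $\|z_\varepsilon\|_{L^\infty}\le C(\|z_\varepsilon\|_{L^6}+\|\nabla z_\varepsilon\|_{H^1})$ (Morrey, $W^{1,6}\subset L^\infty$ in dimension three), which your $k=2$ step already controls.
\end{itemize}
So your fallback is unnecessary.
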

\emph{Proof} Without loss of generality we assume that $v_{n,\varepsilon}$ are normalized in $X$. Observe that $v_{n,\varepsilon} \in D^{1,2} (\mathbb{R}^3)$, so we may decompose it
as
\begin{equation}\label{sim2}
v_{n,\varepsilon} = a_{n,\varepsilon} u_{n,0} + \xi_{n,\varepsilon},\;\;\; a_{n,\varepsilon} = <v_{n,\varepsilon},u_{n,0}>_{D^{1,2} (\mathbb{R}^3)} / ||u_{n,0}||_{D^{1,2} (\mathbb{R}^3)},
\end{equation}
where $\xi_{n,\varepsilon} \perp u_{n,0}$ in $D^{1,2} (\mathbb{R}^3)$, hence $\xi_{n,\varepsilon} \perp u_{n,0}$ also in $L^2_{g_n} (\mathbb{R}^3)$. From (\ref{sim1}), we have that
$\xi_{n,\varepsilon}$ satisfy
\begin{eqnarray}\label{sim3}
\varepsilon\, <\xi_{n,\varepsilon}, \phi>_X + <\xi_{n,\varepsilon}, \phi>_{D^{1,2} (\mathbb{R}^3)} - \mu_{n,\varepsilon} <\xi_{n,\varepsilon}, \phi>_{L^2_{g_n} (\mathbb{R}^3)}= \nonumber
\\
=-\varepsilon\, a_{n,\varepsilon} <u_{n,0}, \phi>_X  + a_{n,\varepsilon} (\mu_{n,\varepsilon}- \lambda_{n,0}) <u_{n,0}, \phi>_{L^2_{g_n} (\mathbb{R}^3)},
\end{eqnarray}
for any $\phi \in X$. Setting $\phi=\xi_{n,\varepsilon}$, we get that
\begin{equation}\label{sim4}
\varepsilon\, ||\xi_{n,\varepsilon}||^2_X + ||\xi_{n,\varepsilon}||^2_{D^{1,2} (\mathbb{R}^3)} - \mu_{n,\varepsilon}\, ||\xi_{n,\varepsilon}||^2_{L^2_{g_n} (\mathbb{R}^3)} = -\varepsilon\,
a_{n,\varepsilon} <u_{n,0}, \xi_{n,\varepsilon}>_{X},
\end{equation}
or
\begin{equation}\label{sim5}
||\xi_{n,\varepsilon}||^2_X + \frac{1}{\varepsilon} \left ( ||\xi_{n,\varepsilon}||^2_{D^{1,2} (\mathbb{R}^3)} - \mu_{n,\varepsilon}\, ||\xi_{n,\varepsilon}||^2_{L^2_{g_n} (\mathbb{R}^3)}
\right ) =-a_{n,\varepsilon} <u_{n,0}, \xi_{n,\varepsilon}>_{X}.
\end{equation}
Observe that $a_{n,\varepsilon}$ is a bounded sequence in $\mathbb{R}$, since
\[
<v_{n,\varepsilon},u_{n,0}>_{D^{1,2} (\mathbb{R}^3)} \leq ||v_{n,\varepsilon}||_{D^{1,2} (\mathbb{R}^3)} ||u_{n,0}||_{D^{1,2} (\mathbb{R}^3)} \leq C\, ||v_{n,\varepsilon}||_{X} ||u_{n,0}||_{X}=C.
\]
Also, $\xi_{n,\varepsilon} = v_{n,\varepsilon} - a_{n,\varepsilon} u_{n,0}$ implies that $\xi_{n,\varepsilon}$ is also bounded in $X$. Moreover, since $\xi_{n,\varepsilon}
\perp u_{n,0}$ in $D^{1,2} (\mathbb{R}^3)$ and $\mu_{n,\varepsilon}$ remain close enough to $\lambda_{n,0}$, we have
\begin{equation}\label{sim5a}
||\xi_{n,\varepsilon}||^2_{D^{1,2} (\mathbb{R}^3)} - \mu_{n,\varepsilon}\, ||\xi_{n,\varepsilon}||^2_{L^2_{g_n} (\mathbb{R}^3)}  \geq (\lambda^{(2)}_{n,0}-\mu_{n,\varepsilon})
||\xi_{n,\varepsilon}||^2_{L^2_{g_n} (\mathbb{R}^3)},
\end{equation}
where $\lambda^{(2)}_{n,0}$ denotes the second eigenvalue of (\ref{eq1.5}). Note that $\xi_{n,\varepsilon}$ converge weakly to some $\xi_*$, in $X$, as $\varepsilon
\downarrow 0$. If $\xi_* \not\equiv 0$, then from (\ref{sim4}) and (\ref{sim5a},) we obtain that
\[
(\lambda^{(2)}_{n,0} -\mu_{n,\varepsilon}) ||\xi_{n,\varepsilon}||^2_{L^2_{g_n} (\mathbb{R}^3)} \leq -\varepsilon\, a_{n,\varepsilon} <u_{n,0}, \xi_{n,\varepsilon}>_{X} \to 0,
\]
as $\varepsilon \downarrow 0$, which is a contradiction. Then $\xi_* \equiv 0$ and in this case, from (\ref{sim5}), we have that
\[
||\xi_{n,\varepsilon}||^2_X \leq -a_{n,\varepsilon} <u_{n,0}, \xi_{n,\varepsilon}>_{X} \to 0.
\]
Thus, $\xi_{n,\varepsilon}$ is strongly convergent to zero in $X$. Finally, we conclude that $v_{n,\varepsilon} \to u_{n,0}$, in $X$. $\blacksquare$ \vspace{0.3cm}

We do the following observation: Assume that $u_{n,0} \equiv \tilde{u}^{i}_n$, where $\tilde{u}^{i}_n$ is an eigenfunction of (\ref{utilde}), associated with some eigenvalue
$\tilde{\lambda}^{i}_n$. We are not in the position to exclude this case, but if this happens can be characterized as exceptional and should hold for special weights $g_n$.
In this case, the curve of eigenpairs $\phi (\varepsilon)$ is given directly by $\phi (\varepsilon)= \{ (\lambda_{n\varepsilon}, u_{n,\varepsilon}) = (\lambda_{0,n} +
\varepsilon\, \tilde{\lambda}^{(i)}_n, \tilde{u}^{(i)}_n) \}$, for every $\varepsilon$. The properties of this curve, are given by Theorem \ref{seigen}, and may be proved by
a similar way (see Remark \ref{remexcept}).  \vspace{0.2cm}

Thus, we will consider the general case; $u_{n,0}$ cannot coincide to any $\tilde{u}^{i}_n$. Assume that
\begin{equation}\label{ltilde}
\tilde{\lambda}^{(i)}_n < \kappa_{n,0} < \tilde{\lambda}^{(i+1)}_n,
\end{equation}
for some fixed $i$ and $\kappa_{n,0}$ denotes the ratio
\begin{equation}\label{k0}
\kappa_{n,0} := \frac{||u_{n,0}||^2_X}{||u_{n,0}||^2_{L^2_{g_n}}} = \frac{1}{||u_{n,0}||^2_{L^2_{g_n}}},
\end{equation}
since $u_{n,0}$ is normalized in $X$. From (\ref{ltilde}) we have that $u_{n,0}$ does not belong to $span \{\tilde{u}^{(1)}_n, ... \tilde{u}^{(i)}_n \}$; if we assume the
opposite then
\[
||u_{n,0}||^2_X = \sum_{j=1}^{i} c^2_j\, ||\tilde{u}^{(j)}_n||^2_X = \sum_{j=1}^{i} \tilde{\lambda}^{(j)}_n\, c^2_j\, ||\tilde{u}^{(j)}_n||^2_{L^2_{g_n}} \leq
\tilde{\lambda}^{(i)}_n \sum_{j=1}^{i} ||c_j\, \tilde{u}^{(j)}_n||^2_{L^2_{g_n}} = \tilde{\lambda}^{(i)}_n ||u_{n,0}||^2_{L^2_{g_n}},
\]
thus $\kappa_{n,0} \leq \tilde{\lambda}^{(i)}_n$, which is a contradiction. From (\ref{ltilde}) we also have that $u_{n,0}$ is not orthogonal to $span \{\tilde{u}^{(1)}_n,
... \tilde{u}^{(i)}_n \}$; if the opposite holds then $\kappa_{n,0} \geq \tilde{\lambda}^{(i+1)}_n$, which is also a contradiction. \vspace{0.2cm}

%
%

We note that there exists functions which are orthogonal to $u_{n,0}$ in $D^{1,2} (\mathbb{R}^3)$, but cannot be orthogonal to $u_{n,0}$
in $X$. The only case that this may happen is when $u_{n,0}$ coincides with an eigenfunction of (\ref{utilde}), i.e., in the exceptional case. To see this, assume the
opposite; all functions orthogonal to $u_{n,0}$ in $D^{1,2} (\mathbb{R}^3)$ are orthogonal also in $X$. Then for some eigenfunction $\tilde{u}^{(i)}_n$ we have that
\[
\tilde{u}^{(i)}_n = a\, u_{n,0} + \zeta,
\]
where $a \ne 0$ and $\zeta$ is orthogonal to $u_{n,0}$ in both $X$ and $D^{1,2} (\mathbb{R}^3)$. From (\ref{utilde}) we have that
\[
a\, <u_{n,0},\phi>_X + <\zeta,\phi>_X - \tilde{\lambda}^{(i)}_n\, a\, <u_{n,0},\phi>_{L^2_{g_n}} - \tilde{\lambda}^{(i)}_n\, <\zeta,\phi>_{L^2_{g_n}}=0,
\]
for any $\phi \in X$. Setting now $\phi = u_{n,0}$, we obtain that $u_{n,0}$ must satisfy
\[
|||u_{n,0}||^2_X = \tilde{\lambda}^{(i)}_n\, |||u_{n,0}||^2_{L^2_{g_n}},
\]
which implies $u_{n,0} \equiv \tilde{u}^{(i)}_n$. From the point of view of (\ref{ltilde}) this is a contradiction. \vspace{0.3cm} \\
%
%
%
%
\emph{Proof of Theorem \ref{seigen}}\ We give the proof for the general case where (\ref{ltilde}) holds. Assume that $n$ and $\varepsilon$ are fixed, such that $\varepsilon$
is small enough. We proceed with the proof in four steps. \vspace{0.2cm} \\
\emph{Existence} For every $\varepsilon >0$ small enough, assume the problem (\ref{eq1.4}). The compactness properties of the space $X$ imply, that is a well defined eigenvalue problem, and the set of the eigenvalues consists an orthonormal basis of $L^2_{g_n} (\mathbb{R}^3)$. Moreover, its principal eigenvalue satisfies the variational characterization
\[
\lambda_{n,\varepsilon} = \inf_{0 \not\equiv u \in X} \frac{\varepsilon ||u||^2_X + ||u||^2_{D^{1,2}(\mathbb{R}^3)}}{||u||^2_{L^2_{g_n}(\mathbb{R}^3)}}.
\]
Moreover,
\[
\lambda_{n,\varepsilon} \geq \inf_{0 \not\equiv u \in X} \frac{||u||^2_{D^{1,2}(\mathbb{R}^3)}}{||u||^2_{L^2_{g_n}(\mathbb{R}^3)}} = \lambda_{n,0},
\]
and
\[
\frac{\varepsilon ||u_{n,0}||^2_X + ||u_{n,0}||^2_{D^{1,2}(\mathbb{R}^3)}}{||u_{n,0}||^2_{L^2_{g_n}(\mathbb{R}^3)}} \geq \inf_{0 \not\equiv u \in X} \frac{\varepsilon ||u||^2_X + ||u||^2_{D^{1,2}(\mathbb{R}^3)}}{||u||^2_{L^2_{g_n}(\mathbb{R}^3)}} = \lambda_{n,\varepsilon}.
\]
Since $u_{n,0}$ is normalized in $X$, we obtain that
\begin{equation} \label{eigbound}
\frac{\varepsilon}{||u_{n,0}||^2_{L^2_{g_n}(\mathbb{R}^3)}} + \lambda_{n,0} \geq \lambda_{n,\varepsilon} \geq \lambda_{n,0}.
\end{equation}
This implies that $\lambda_{n,\varepsilon}$ is decreasing, and $\lambda_{n,\varepsilon} \to  \lambda_{n,0}$, as $\varepsilon \to 0$. Then, Lemma (\ref{lem2.1}) implies that $u_{n,\varepsilon} \to u_{n,0}$, in $X$, as $\varepsilon \downarrow 0$.

As a conclusion we get that, for any $n$ and any $0 < \varepsilon$ small enough there exists an eigenpair $(\lambda_{n,\varepsilon}, u_{n,\varepsilon})$ of (\ref{eqsweight}), which form as $\varepsilon \downarrow 0$, a set of eigenpairs $\phi_n (\varepsilon)$ with the eigenfunctions been positive and normalized. Moreover, $\lambda_{n,\varepsilon}$ and $u_{n,\varepsilon}$ maybe written as
\begin{equation}\label{ueX}
u_{n,\varepsilon} = \alpha_{n,\varepsilon} u_{n,0} + \eta_{n,\varepsilon},\;\;\;\;\;\; <\eta_{n,\varepsilon}, u_{n,0}>_X =0,
\end{equation}
where
\begin{equation}\label{a}
\alpha_{n,\varepsilon} = <u_{n,\varepsilon}, u_{n,0}>_X \ne 0,
\end{equation}
and
\begin{equation} \label{bab2a}
\lambda_{n,\varepsilon} = \lambda_{n,0} + \varepsilon\, \kappa_{n,\varepsilon},
\end{equation}
for some $\kappa_{n,\varepsilon}$.  \vspace{0.2cm} \\
\emph{Uniqueness} For any $n$ fixed, assume that there are $(\mu_{n,\varepsilon},v_{n,\varepsilon})$ eigenpairs of (\ref{eqsweight}), $\mu_{n,\varepsilon} \to \lambda_{n,0}$,
$\varepsilon \in [0,s)$, for some $s$ small enough. Assume also that $\mu_{n,\varepsilon} \ne \lambda_{n,\varepsilon}$ and that $v_{n,\varepsilon}$ are normalized in $X$.
Lemma (\ref{lem2.1}) implies that $v_{n,\varepsilon} \to u_{n,0}$, in $X$, as $\varepsilon \downarrow 0$. Then,
\[
(\mu_{n,\varepsilon}-\lambda_{n,\varepsilon}) <v^p_{n,\varepsilon}, u_{n,\varepsilon}>_{L^2_{g_n} (\mathbb{R}^3)} =0,
\]
where $(\lambda_{n,\varepsilon},u_{n,\varepsilon})$ is the eigenpair corresponding to the same $\varepsilon$. Thus, $v_{n,\varepsilon}$ and $u_{n,\varepsilon}$ should be
orthogonal in $L^2_{g_n} (\mathbb{R}^3)$ which is a contradiction, since they both converge to $u_{n,0}$ in $X$, thus also in $L^2_{g_n} (\mathbb{R}^3)$. As a conclusion we have the
uniqueness of $\phi_n (\varepsilon)$. \vspace{0.2cm} \\
\emph{Simplicity} For any $n$ fixed, assume that $(\lambda_{n,\varepsilon},v_{n,\varepsilon})$ is an eigenpair of (\ref{eqsweight}). We assume that $v_{n,\varepsilon}$ is normalized in $X$. Then, Lemma (\ref{lem2.1}) implies that $v_{n,\varepsilon} \to u_{n,0}$, in $X$, as
$\varepsilon \downarrow 0$. This is also a contradiction, since $v_{n,\varepsilon}$ and $u_{n,\varepsilon}$ should be orthogonal in $L^2_{g_n} (\mathbb{R}^3)$. \vspace{0.2cm} \\
\emph{Algebraic Simplicity} Consider the operator $L_{n,\varepsilon} (\varepsilon, u): (0,s) \times X \to X$, defined by
\[
<L_{n,\varepsilon} (\varepsilon, u), \varphi>_X = \varepsilon\, <u, \phi>_X + <u, \phi>_{D^{1,2} (\mathbb{R}^3)} - \lambda_{n,\varepsilon} <u, \phi>_{L^2_{g_n} (\mathbb{R}^3)},
\]
for any $\phi \in X$. From the above step (simplicity) we have that the null space of $L_{n,\varepsilon}$ is $[u_{n,\varepsilon}]$. It suffices to prove that the range of
$L_{n,\varepsilon}$ is $E_{n,\varepsilon}$, the orthogonal complement of ${u_{n,\varepsilon}}$ in $X$. Assume the opposite; Let $v_{n,\varepsilon} \in E_{n,\varepsilon}$,
such that $L_{n,\varepsilon} (\varepsilon, v_{n,\varepsilon})=u_{n,\varepsilon}$, i.e.,
\begin{equation}
\varepsilon\, <v_{n,\varepsilon}, \phi>_X + <v_{n,\varepsilon}, \phi>_{D^{1,2} (\mathbb{R}^3)} - \lambda_{n,\varepsilon} <v_{n,\varepsilon}, \phi>_{L^2_{g_n} (\mathbb{R}^3)} =
<u_{n,\varepsilon}, \phi>_X,
\end{equation}
for any $\phi \in V$. However, this is a contradiction if we set $\phi = u_{n,\varepsilon}$. Thus the proof is completed. $\blacksquare$
\begin{lemma} \label{phicontinuous}
For any fixed $n$, the curve of eigenpairs $\phi_n$ is continuous in $\mathbb{R} \times V$.
\end{lemma}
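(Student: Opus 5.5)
We prove continuity of $\varepsilon \mapsto (\lambda_{n,\varepsilon}, u_{n,\varepsilon})$ at every point $\varepsilon_0 \in [0,s)$. Here $V$ is read as $X$, and the eigenfunctions are normalized in $X$ and positive. The case $\varepsilon_0=0$ is already settled: (\ref{eigbound}) gives $\lambda_{n,\varepsilon}\to\lambda_{n,0}$, and then Lemma \ref{lem2.1} gives $u_{n,\varepsilon}\to u_{n,0}$ in $X$. So we fix $\varepsilon_0\in(0,s)$ and a sequence $\varepsilon_k\to\varepsilon_0$.

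\emph{Continuity of the eigenvalues.} We use the variational characterization
\[
\lambda_{n,\varepsilon} = \inf_{0 \not\equiv u \in X} \frac{\varepsilon ||u||^2_X + ||u||^2_{D^{1,2}(\mathbb{R}^3)}}{||u||^2_{L^2_{g_n}(\mathbb{R}^3)}}.
\]
For each fixed $u$ the Rayleigh quotient is affine and increasing in $\varepsilon$. Hence $\lambda_{n,\varepsilon}$ is nondecreasing and concave in $\varepsilon$, being an infimum of affine functions. It is finite on $[0,s)$, so it is continuous on the open interval $(0,s)$. Alternatively, one can compare the quotients directly:
\[
\lambda_{n,\varepsilon_k}\le \lambda_{n,\varepsilon_0}+|\varepsilon_k-\varepsilon_0|\,\frac{1}{||u_{n,\varepsilon_0}||^2_{L^2_{g_n}}}.
\]
For the reverse inequality, insert $u_{n,\varepsilon_k}$ into the quotient at $\varepsilon_0$. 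This uses a uniform lower bound on $||u_{n,\varepsilon_k}||_{L^2_{g_n}}$, which comes from the next step.

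\emph{Continuity of the eigenfunctions.} The sequence $u_{n,\varepsilon_k}$ is bounded in $X$. Passing to a subsequence, $u_{n,\varepsilon_k}\rightharpoonup w$ weakly in $X$ and strongly in $L^2_{g_n}$, by the compact imbedding (\ref{Dcmim}). Passing to the limit in (\ref{sim1}) shows that $(\lambda_{n,\varepsilon_0},w)$ solves (\ref{eqsweight}) at $\varepsilon_0$.

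To upgrade to strong convergence, test the equation for $u_{n,\varepsilon_k}$ with $u_{n,\varepsilon_k}$ itself:
\[
\varepsilon_k + ||u_{n,\varepsilon_k}||^2_{D^{1,2}} = \lambda_{n,\varepsilon_k}||u_{n,\varepsilon_k}||^2_{L^2_{g_n}}.
\]
Denote the left side by $Q_k$. The right side converges to $\lambda_{n,\varepsilon_0}||w||^2_{L^2_{g_n}}=\varepsilon_0||w||_X^2+||w||^2_{D^{1,2}}$. Both $||\cdot||_X$ and $||\cdot||_{D^{1,2}}$ are weakly lower semicontinuous, and $\varepsilon_0>0$. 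Therefore $||u_{n,\varepsilon_k}||_X\to||w||_X$, and strong convergence in $X$ follows.

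\emph{Main obstacle.} The main difficulty is to exclude $w=0$ and to identify $w$ with $u_{n,\varepsilon_0}$. If $w=0$, the identity above gives $\varepsilon_k\le \lambda_{n,\varepsilon_k}||u_{n,\varepsilon_k}||^2_{L^2_{g_n}}\to0$. This contradicts $\varepsilon_0>0$. Hence $w\neq0$, and the norm convergence gives $||w||_X=1$. Moreover $w\ge0$ as a limit of positive functions. By the simplicity step of Theorem \ref{seigen}, the eigenspace at $\varepsilon_0$ is one-dimensional, so $w=\pm u_{n,\varepsilon_0}$, and positivity forces $w=u_{n,\varepsilon_0}$. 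Since every subsequence has a further subsequence with this same limit, the whole sequence converges. This proves continuity of $\varphi_n$ in $\mathbb{R}\times X$.
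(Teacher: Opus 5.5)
Your proof is correct and follows the same route as the paper: pass to the limit in (\ref{bab4a}) as $\varepsilon\to\varepsilon_0$, then identify the limiting eigenpair with the point of $\phi_n$ at $\varepsilon_0$ using Theorem \ref{seigen} (you use simplicity plus positivity). Your version is more complete, because the paper simply posits $\lambda_{n,\varepsilon}\to\lambda_*$ and strong convergence $u_{n,\varepsilon}\to u_*$ in $X$, whereas you derive the first from the concavity of $\varepsilon\mapsto\lambda_{n,\varepsilon}$ (which also gives $\lambda_*=\lambda_{n,\varepsilon_0}$ directly) and the second from the energy identity together with $\varepsilon_0>0$.
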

\emph{Proof} Let $\varepsilon \to \varepsilon_*$. If $\varepsilon_* =0$, from (\ref{eigbound}), we get that $\lambda_{n,\varepsilon} \to \lambda_{n,0}$ and from
Lemma \ref{lem2.1} we have that $u_{n,\varepsilon} \to u_{n,0}$ in $X$. Assume that $\varepsilon_* \ne 0$. Let $\lambda_{n,\varepsilon} \to \lambda_{*}$ in $\mathbb{R}$ and
$u_{n,\varepsilon} \to u_*$ in $X$. Taking the limit, as $\varepsilon \to \varepsilon_*$, in
\begin{equation}\label{bab4a}
\varepsilon <u_{n,\varepsilon}, \phi>_X + <u_{n,\varepsilon}, \phi>_{D^{1,2} (\mathbb{R}^3)} - \lambda_{n,\varepsilon} <u_{n,\varepsilon}, \phi>_{L^2_{g_n}} =0,
\end{equation}
we conclude that $(\lambda_*,u_*)$ is an eigepair corresponding to
$\varepsilon_*$. From Theorem \ref{seigen}, $(\lambda_*,u_*) \in \phi_n$ and the proof is completed. $\blacksquare$
\begin{remark} \label{remexcept}
Assume the exceptional case where $u_{n,0} \equiv \tilde{u}^{i}_n$, where $\tilde{u}^{i}_n$ is an eigenfunction of (\ref{utilde}), associated with some eigenvalue
$\tilde{\lambda}^{i}_n$. Then, the curve of eigenfunctions $\phi (\varepsilon)= \{ (\lambda_{n\varepsilon}, u_{n,\varepsilon}) = (\lambda_{0,n} + \varepsilon\,
\tilde{\lambda}^{(i)}_n, \tilde{u}^{(i)}_n) \}$, for every $\varepsilon$, has the properties of Theorem \ref{seigen}. More precisely, uniqueness maybe proved exactly as above
and simplicity is implied from the simplicity of $u_{n,0}$.
\end{remark}
\subsection{Proof of Proposition \ref{mainprop}} \label{prmainprop}
We remind that if $\varphi_n (\varepsilon) = (\lambda_{n,\varepsilon} , u_{n,\varepsilon})$,
then $(\lambda_{n,\varepsilon} , u_{n,\varepsilon})$ satisfies (\ref{bab4a}). For $\varepsilon =0$, $(\lambda_{n,0},u_{n,0})$ are the principal eigenpairs of (\ref{eq1.5})
with weights $g_n$ and $(\lambda_{0},u_{0})$ denotes the principal eigenpair of (\ref{eq1.5}) with weight $g_0$. \vspace{0.2cm} \\
Next result concerns the asymptotic behaviour of $\phi_n$ as $u_{n,0} \to u_0$ in $X$. In this case, $g_n \to g_0$ in $C^{1}(\mathbb{R}^3)$, $\lambda_{n,0} \to \lambda_{0}$ and $\lambda^{(2)}_{n,0} \to \lambda^{(2)}_0$, where $\lambda^{(2)}_{n,0}$ and $\lambda^{(2)}_{0}$ are the second eigenvalues of (\ref{eq1.5}), with weights $g_n$ and $g_0$, respectively. \vspace{0.2cm} \\
\emph{Proof of Proposition \ref{mainprop}} Without loss of generalization, we assume that $u_{n,0}$, $n \geq 1$, cannot coincide to any $\tilde{u}^{i}_n$; if there where a
subsequence of $u_{n,0}$ such that $u_{n,0} = \tilde{u}^{i}_n = u_0$, for some $i$, then the corresponding curves are $\phi (\varepsilon)= \{ (\lambda_{n\varepsilon},
u_{n,\varepsilon}) = (\lambda_{0,n} + \varepsilon\, \tilde{\lambda}^{(i)}_n, \tilde{u}^{(i)}_n) \}$ and the result follows. However, we
do not exclude $u_0$ to be equal with any eigenfunction of (\ref{utilde}). \vspace{0.2cm}

In what follows leaving $n \to \infty$ means that $u_{n,0} \to u_0$ in $X$. Let $\delta$ be a positive number, such that
\begin{equation}\label{delta}
2 \delta < \lambda^{(2)}_{0}-\lambda_{0}.
\end{equation}
The convergence of $(u_{n,0},\lambda_{n,0}) \to (u_{0},\lambda_{0})$, in $X \times \mathbb{R}$, implies that there exists $N(\delta) \in \mathbb{N}$, such that
\[
|\lambda_{n,0} - \lambda_0|<\delta\;\;\; \mbox{and}\;\;\; \left | 1-\frac{||u_{0}||^2_{L^2_{g_0}(\mathbb{R}^3)}}{||u_{n,0}||^2_{L^2_{g_n}(\mathbb{R}^3)}}  \right | < \delta,
\]
for any $n > N(\delta)$. We introduce the following quantity, depending only on $(\lambda_0,u_0)$:
\begin{equation}\label{s*}
s_* := \frac{\lambda^{(2)}_{0}-\lambda_{0} -2 \delta}{1+\delta} ||u_{0}||^2_{L^2_{g_0}(\mathbb{R}^3)},
\end{equation}
Let $n > N(\delta)$ and $\varepsilon \in [0,s_*]$. Then,
\begin{eqnarray}\label{lbound}
\lambda_0 + \delta + \frac{\varepsilon}{||u_{n,0}||^2_{L^2_{g_n}(\mathbb{R}^3)}} &\leq& \lambda_0 + \delta + \frac{\lambda^{(2)}_{0}-\lambda_{0} -2 \delta}{1+\delta}
\frac{||u_{0}||^2_{L^2_{g_0}(\mathbb{R}^3)}}{||u_{n,0}||^2_{L^2_{g_n}(\mathbb{R}^3)}} \nonumber \\ &\leq& \lambda_0 + \delta + \frac{\lambda^{(2)}_{0}-\lambda_{0} -2 \delta}{1+\delta}
(1+\delta) = \lambda^{(2)}_{0}- \delta.
\end{eqnarray}
Next, for any $n$, we decompose $u_{n,\varepsilon}$ as in (\ref{ueX}), (\ref{a}). Decomposition (\ref{ueX}) implies that
\begin{equation}\label{hestimate}
1 = \alpha^2_{n,\varepsilon} + ||\eta_{n,\varepsilon}||_X^2,
\end{equation}
for every $(n,\varepsilon)$, $0 \leq \varepsilon \leq s_*$. Next we decompose $\eta_{n,\varepsilon}$ as
\begin{equation}\label{eta}
\eta_{n,\varepsilon} = \beta_{n,\varepsilon} u_{n,0} + \xi_{n,\varepsilon},\;\;\;\;\;\; <\xi_{n,\varepsilon}, u_{n,0}>_{D^{1,2} (\mathbb{R}^3)} =0,
\end{equation}
where
\begin{equation}\label{b}
\beta_{n,\varepsilon} = \frac{<\eta_{n,\varepsilon}, u_{n,0}>_{D^{1,2} (\mathbb{R}^3)} }{||u_{n,0}||^2_{D^{1,2} (\mathbb{R}^3)}} = \frac{<\eta_{n,\varepsilon},
u_{n,0}>_{L^2_{g_n}(\mathbb{R}^3)}}{||u_{n,0}||^2_{L^2_{g_n}(\mathbb{R}^3)}}.
\end{equation}
Observe that $<\xi_{n,\varepsilon}, u_{n,0}>_{D^{1,2} (\mathbb{R}^3)} =0$ implies also $<\xi_{n,\varepsilon}, u_{n,0}>_{L^2_{g_n}(\mathbb{R}^3)} =0$. Moreover, from (\ref{ueX}) and (\ref{eta}), $u_{n,\varepsilon}$ maybe written as
\begin{equation}\label{u2}
u_{n,\varepsilon} = (\alpha_{n,\varepsilon} + \beta_{n,\varepsilon}) u_{n,0} + \xi_{n,\varepsilon}.
\end{equation}
This equality and (\ref{ueX}) imply that
\[
\alpha_{n,\varepsilon} u_{n,0} + \eta_{n,\varepsilon} = (\alpha_{n,\varepsilon} + \beta_{n,\varepsilon}) u_{n,0} + \xi_{n,\varepsilon},
\]
so $\beta_{n,\varepsilon}$ satisfies also
\begin{equation}\label{b2}
\beta_{n,\varepsilon} = - <u_{n,0},\xi_{n,\varepsilon}>_X.
\end{equation}
Using (\ref{ueX}), (\ref{bab4a}) implies
\begin{eqnarray} \label{eq4.9}
\varepsilon\, \alpha_{n,\varepsilon}\, <u_{n,0}, \phi>_X + \alpha_{n,\varepsilon}\, (\lambda_{n,0} - \lambda_{n,\varepsilon})\, <u_{n,0}, \phi>_{L^2_{g_n}(\mathbb{R}^3)} +
\varepsilon\, <\eta_{n,\varepsilon}, \phi>_X + \nonumber
\\
+ <\eta_{n,\varepsilon}, \phi>_{D^{1,2} (\mathbb{R}^3)}- \lambda_{n,\varepsilon}\, <\eta_{n,\varepsilon}, \phi>_{L^2_{g_n}(\mathbb{R}^3)} =0,
\end{eqnarray}
since
\begin{equation}\label{u0}
<u_{n,0}, \phi>_{D^{1,2} (\mathbb{R}^3)} - \lambda_{n,0}\, <u_{n,0}, \phi>_{L^2_{g_n}(\mathbb{R}^3)} =0,
\end{equation}
for any $\phi \in X$. Setting $\phi = u_{n,0}$ in (\ref{eq4.9}) and using (\ref{ueX}), (\ref{u0}) we obtain that
\[
\varepsilon\, \alpha_{n,\varepsilon}\, = (\lambda_{n,\varepsilon} - \lambda_{n,0})\, \left ( \alpha_{n,\varepsilon} ||u_{n,0}||^2_{L^2_{g_n}(\mathbb{R}^3)} + <\eta_{n,\varepsilon},
u_{n,0}>_{L^2_{g_n}(\mathbb{R}^3)}  \right ),
\]
since $u_{n,0}$ is normalized in $X$. Finally, using (\ref{b}) and setting
\begin{equation}\label{l}
\lambda_{n,\varepsilon} - \lambda_{n,0} = \varepsilon\, \kappa_{n,\varepsilon},
\end{equation}
we conclude that
\begin{equation}\label{k}
\kappa_{n,\varepsilon} = \kappa_{n,0}\, \frac{\alpha_{n,\varepsilon}}{\alpha_{n,\varepsilon} + \beta_{n,\varepsilon}},
\end{equation}
where $\kappa_{n,0}$ is given by (\ref{k0}). Next we prove some estimates for $\kappa_{n,\varepsilon}$, $\alpha_{n,\varepsilon}$ and $\beta_{n,\varepsilon}$. Holds that
\begin{equation}\label{kestimate1}
\kappa_{n,\varepsilon} >0,\;\;\;\; \mbox{for every}\;\;\; (n,\varepsilon),\;\; 0 \leq \varepsilon \leq s_*,
\end{equation}
Assume that $\kappa_{n,\varepsilon} \leq 0$, for some $(n,\varepsilon)$. Then,
\[
||u_{n,\varepsilon}||^2_{D^{1,2} (\mathbb{R}^3)} < \lambda_{n,\varepsilon} ||u_{n,\varepsilon}||^2_{L^2_{g_n} (\mathbb{R}^3)} \leq \lambda_{n,0} ||u_{n,\varepsilon}||^2_{L^2_{g_n}
(\mathbb{R}^3)},
\]
which is a contradiction to the variational characterization (see (\ref{varchar})) of $\lambda_{n,0}$. Thus, $\kappa_{n,\varepsilon}$ remains positive for $\varepsilon \ne
0$. For $\alpha_{n,\varepsilon}$ we have that, for $\varepsilon \ne 0$, $\alpha_{n,\varepsilon} \ne 0$, since $\kappa_{n,\varepsilon} \ne 0$. Moreover, for any fixed $n$,
$\alpha_{n,\varepsilon} \to 1$, as $\varepsilon \to 0$ and from (\ref{a}) and Lemma \ref{phicontinuous} we have that $\alpha_{n,\varepsilon}$ is continuous. Hence,
\begin{equation}\label{aestimate}
0 < \alpha_{n,\varepsilon} \leq 1,\;\;\;\; \mbox{for every}\;\;\; (n,\varepsilon),\;\; 0 \leq \varepsilon \leq s_*.
\end{equation}
Observe now that for any fixed $n$, $\beta_{n,\varepsilon} \to 0$, as $\varepsilon \to 0$. Then, $\kappa_{n,\varepsilon} \to \kappa_{n,0}$, as $\varepsilon \to 0$.

Next, we consider $\beta_{n,\varepsilon}$. First, we prove that, for $\varepsilon \ne 0$, $\beta_{n,\varepsilon} = 0$, if and only if $u_{n,\varepsilon}=u_{n,0}=\tilde{u}^{(i)}_n$. Let $u_{n,\varepsilon}=u_{n,0}=\tilde{u}^{(i)}_n$. From (\ref{u2}) we get that $\xi_{n,\varepsilon} = 0$, which from (\ref{b2}) implies that $\beta_{n,\varepsilon} = 0$. Let $\beta_{n,\varepsilon} =0$. From (\ref{eta}) we have that $\eta_{n,\varepsilon} \equiv \xi_{n,\varepsilon}$. Setting $\phi= \xi_{n,\varepsilon}$ in (\ref{eq4.9}) we get that
\begin{equation}\label{x1a}
\varepsilon ||\xi_{n,\varepsilon}||^2_{X} + ||\xi_{n,\varepsilon}||^2_{D^{1,2} (\mathbb{R}^3)} - \lambda_{n,\varepsilon}\, ||\xi_{n,\varepsilon}||^2_{L^2_{g_n} (\mathbb{R}^3)}=0.
\end{equation}
However, the same argument as in Lemma \ref{lem2.1} (see (\ref{sim5a})), imply that
\begin{equation}\label{x1}
||\xi_{n,\varepsilon}||^2_{D^{1,2} (\mathbb{R}^3)} - \lambda_{n,\varepsilon} ||\xi_{n,\varepsilon}||^2_{L^2_{g_n} (\mathbb{R}^3)} \geq ||\xi_{n,\varepsilon}||^2_{D^{1,2} (\mathbb{R}^3)} -
\lambda^{(2)}_{n,0}\, ||\xi_{n,\varepsilon}||^2_{L^2_{g_n} (\mathbb{R}^3)} \geq 0,
\end{equation}
where $\lambda^{(2)}_{n,0}$ is the second eigenvalue of (\ref{eq1.5}). Then, (\ref{x1a}) holds only if $\eta_{n,\varepsilon} \equiv \xi_{n,\varepsilon} \equiv 0$, i.e., $u_{n,\varepsilon}=u_{n,0}=\tilde{u}^{(i)}_n$. Thus, $\beta_{n,\varepsilon} = 0$, for $\varepsilon \ne 0$, is equivalent to the exceptional case $u_{n,\varepsilon}=u_{n,0}=\tilde{u}^{(i)}_n$, which is a contradiction to
our assumption. So, $\beta_{n,\varepsilon} \ne 0$, for every $n > N(\delta)$ and $0 < \varepsilon \leq s_*$.

We prove now that $\beta_{n,\varepsilon} > 0$, for every $n > N(\delta)$ and $0 < \varepsilon \leq s_*$. Using decomposition (\ref{u2}) in (\ref{bab4a}), setting also $\phi=\xi_{n,\varepsilon}$, we obtain
\begin{eqnarray*}
\varepsilon\, <(\alpha_{n,\varepsilon} + \beta_{n,\varepsilon}) u_{n,0} + \xi_{n,\varepsilon}, \xi_{n,\varepsilon}>_X + <(\alpha_{n,\varepsilon} + \beta_{n,\varepsilon})
u_{n,0} + \xi_{n,\varepsilon}, \xi_{n,\varepsilon}>_{D^{1,2} (\mathbb{R}^3)} - \\
-\lambda_{n,\varepsilon}\, <(\alpha_{n,\varepsilon} + \beta_{n,\varepsilon}) u_{n,0} + \xi_{n,\varepsilon}, \xi_{n,\varepsilon}>_{L^2_{g_n}(\mathbb{R}^3)} =0,
\end{eqnarray*}
or
\[
\varepsilon\, (\alpha_{n,\varepsilon} + \beta_{n,\varepsilon}) <u_{n,0}, \xi_{n,\varepsilon}>_X + \varepsilon ||\xi_{n,\varepsilon}||^2_{X} + ||\xi_{n,\varepsilon}||^2_{H_0^1
(\mathbb{R}^3)} - \lambda_{n,\varepsilon}\, ||\xi_{n,\varepsilon}||^2_{L^2_{g_n} (\mathbb{R}^3)}=0.
\]
Using now (\ref{b2}) we get that
\begin{equation}\label{x2}
\varepsilon ||\xi_{n,\varepsilon}||^2_{X} + ||\xi_{n,\varepsilon}||^2_{D^{1,2} (\mathbb{R}^3)} - \lambda_{n,\varepsilon}\, ||\xi_{n,\varepsilon}||^2_{L^2_{g_n} (\mathbb{R}^3)}=
\varepsilon\, \beta_{n,\varepsilon} (\alpha_{n,\varepsilon} + \beta_{n,\varepsilon}).
\end{equation}
Observe that $\alpha_{n,\varepsilon} + \beta_{n,\varepsilon} \ne 0$, since if $\alpha_{n,\varepsilon} +
\beta_{n,\varepsilon} = 0$, (\ref{u2}) imply that $u_{n,\varepsilon} = \xi_{n,\varepsilon}$. This is a contradiction to Lemma \ref{lem2.1}. The positivity of
$\alpha_{n,\varepsilon}$ and $\kappa_{n,\varepsilon}$, see (\ref{k}), imply that $\alpha_{n,\varepsilon} + \beta_{n,\varepsilon}$ is also positive. Then, from (\ref{x1}) and
(\ref{x2}) we conclude that $\beta_{n,\varepsilon}$ is positive, for any fixed $n$ and $\varepsilon$ small enough. Note that for fixed $n$, from (\ref{b}) and Lemma
\ref{phicontinuous} we have that $\beta_{n,\varepsilon}$ is continuous. Hence,
\begin{equation}\label{bestimate}
0 < \beta_{n,\varepsilon}\;\;\; \mbox{for}\;\; \varepsilon \ne 0\;\;\; \mbox{and}\;\;\; \beta_{n,0} = 0,
\end{equation}
for every $n > N(\delta)$ and $0 \leq \varepsilon \leq s_*$. For positive $\beta_{n,\varepsilon}$, we also get that
\begin{equation}\label{kestimate}
\kappa_{n,\varepsilon} \leq ||u_{n,0}||^{-2}_{L^2_{g_n} (\mathbb{R}^3)},
\end{equation}
and finally, from (\ref{l}), (\ref{kestimate}) and (\ref{lbound}) we obtain the uniform bound for $\lambda_{n,\varepsilon}$
\begin{equation}\label{lfinal}
\lambda_{n,\varepsilon} = \lambda_{n,0} + \varepsilon\, \kappa_{n,\varepsilon} \leq \lambda_{0} + \delta + \varepsilon\, ||u_{n,0}||^{-2}_{L^2_{g_n} (\mathbb{R}^3)} \leq \lambda^{(2)}_0 - \delta,
\end{equation}
for every $n > N(\delta)$ and $0 \leq \varepsilon \leq s_*$. Our last estimate concerns $\eta_{n,\varepsilon}$ in terms of $\alpha_{n,\varepsilon}$ and
$\beta_{n,\varepsilon}$.
Setting $\phi=\eta_{n,\varepsilon}$ in (\ref{eq4.9}), we have
\[
\varepsilon ||\eta_{n,\varepsilon}||^2_{X} + ||\eta_{n,\varepsilon}||^2_{D^{1,2} (\mathbb{R}^3)} - \lambda_{n,\varepsilon}\, ||\eta_{n,\varepsilon}||^2_{L^2_{g_n} (\mathbb{R}^3)}=
\alpha_{n,\varepsilon}\, (\lambda_{n,\varepsilon} - \lambda_{n,0})\, <u_{n,0}, \eta_{n,\varepsilon}>_{L^2_{g_n}(\mathbb{R}^3)}.
\]
Using (\ref{l}), (\ref{k}) and (\ref{b}) we get that
\begin{equation}\label{h1}
\varepsilon ||\eta_{n,\varepsilon}||^2_{X} + ||\eta_{n,\varepsilon}||^2_{D^{1,2} (\mathbb{R}^3)} - \lambda_{n,\varepsilon}\, ||\eta_{n,\varepsilon}||^2_{L^2_{g_n} (\mathbb{R}^3)}=
\varepsilon\, \frac{\alpha^2_{n,\varepsilon}\, \beta_{n,\varepsilon}}{\alpha_{n,\varepsilon} + \beta_{n,\varepsilon}}.
\end{equation}
Using decomposition (\ref{eta}) and (\ref{l})-(\ref{k}) we have that
\begin{eqnarray*}
&&||\eta_{n,\varepsilon}||^2_{D^{1,2} (\mathbb{R}^3)} - \lambda_{n,\varepsilon}\, ||\eta_{n,\varepsilon}||^2_{L^2_{g_n} (\mathbb{R}^3)} = \\
&&=  \beta^2_{n,\varepsilon}\, ||u_{n,0}||^2_{D^{1,2} (\mathbb{R}^3)} - \beta^2_{n,\varepsilon}\, \lambda_{n,\varepsilon}\, ||u_{n,0}||^2_{L^2_{g_n} (\mathbb{R}^3)} +
||\xi_{n,\varepsilon}||^2_{D^{1,2} (\mathbb{R}^3)} - \lambda_{n,\varepsilon}\, ||\xi_{n,\varepsilon}||^2_{L^2_{g_n} (\mathbb{R}^3)} =\\
&&=  -\beta^2_{n,\varepsilon}\, (\lambda_{n,\varepsilon} - \lambda_{n,0})\, ||u_{n,0}||^2_{L^2_{g_n} (\mathbb{R}^3)} + ||\xi_{n,\varepsilon}||^2_{D^{1,2} (\mathbb{R}^3)} -
\lambda_{n,\varepsilon}\, ||\xi_{n,\varepsilon}||^2_{L^2_{g_n} (\mathbb{R}^3)}= \\
&&=  - \varepsilon\, \frac{\beta^2_{n,\varepsilon}\, \alpha_{n,\varepsilon}}{\alpha_{n,\varepsilon} + \beta_{n,\varepsilon}} + ||\xi_{n,\varepsilon}||^2_{D^{1,2} (\mathbb{R}^3)} -
\lambda_{n,\varepsilon}\, ||\xi_{n,\varepsilon}||^2_{L^2_{g_n} (\mathbb{R}^3)}.
\end{eqnarray*}
Then (\ref{h1}) is written as
\[
\varepsilon ||\eta_{n,\varepsilon}||^2_{X} - \varepsilon\, \frac{\beta^2_{n,\varepsilon}\, \alpha_{n,\varepsilon}}{\alpha_{n,\varepsilon} + \beta_{n,\varepsilon}} +
||\xi_{n,\varepsilon}||^2_{D^{1,2} (\mathbb{R}^3)} - \lambda_{n,\varepsilon}\, ||\xi_{n,\varepsilon}||^2_{L^2_{g_n} (\mathbb{R}^3)} = \varepsilon\, \frac{\alpha^2_{n,\varepsilon}\,
\beta_{n,\varepsilon}}{\alpha_{n,\varepsilon} + \beta_{n,\varepsilon}},
\]
or
\begin{equation} \label{eq4.10}
\varepsilon ||\eta_{n,\varepsilon}||^2_{X} + ||\xi_{n,\varepsilon}||^2_{D^{1,2} (\mathbb{R}^3)} - \lambda_{n,\varepsilon}\, ||\xi_{n,\varepsilon}||^2_{L^2_{g_n} (\mathbb{R}^3)} =
\varepsilon\, \alpha_{n,\varepsilon}\, \beta_{n,\varepsilon}.
\end{equation}
Observe now that $\xi_{n,\varepsilon}$ is orthogonal to $u_{n,0}$ in $D^{1,2} (\mathbb{R}^3)$ and that the estimate (\ref{lfinal}) is uniform, i.e., holds for any $n > N(\delta)$ and
$0 \leq \varepsilon \leq s_*$, thus
\[
||\xi_{n,\varepsilon}||^2_{D^{1,2} (\mathbb{R}^3)} - \lambda_{n,\varepsilon}\, ||\xi_{n,\varepsilon}||^2_{L^2_{g_n} (\mathbb{R}^3)} >0.
\]
Finally, from (\ref{eq4.10}) we conclude that
\begin{equation}\label{hfinal}
||\eta_{n,\varepsilon}||^2_{X} < \alpha_{n,\varepsilon}\, \beta_{n,\varepsilon},
\end{equation}
for any $n > N(\delta)$ and $0 \leq \varepsilon \leq s_*$. \vspace{0.2cm}

The final step is to prove that every sequence $(u_{n,\varepsilon}, \lambda_{n,\varepsilon}) \in \phi_n$, converges (up to some subsequence) to some $(u_{0,\varepsilon},
\lambda_{0,\varepsilon}) \in \phi_0$, for every $n > N(\delta)$ and every $0 \leq \varepsilon \leq s_*$, in $X \times \mathbb{R}$. Let $(u_{n,\varepsilon},
\lambda_{n,\varepsilon}) \in \phi_n$. Holds that $u_{n,\varepsilon}$ are normalized, hence bounded and $\lambda_{n,\varepsilon}$ is also bounded (see (\ref{lfinal})). Then,
up to some subsequence, $u_{n,\varepsilon} \rightharpoonup u_*$ in $X$ and $\lambda_{n,\varepsilon} \to \lambda_*$.

Assume that $\varepsilon \to \varepsilon_*$, for some $\varepsilon_* \ne 0$. From (\ref{bab4a}) we have that
\[
\varepsilon ||u_{n,\varepsilon}||^2_X + ||u_{n,\varepsilon}||^2_{D^{1,2} (\mathbb{R}^3)} - \lambda_{n,\varepsilon}\, ||u_{n,\varepsilon}||^2_{L^2_{g_n}} = 0.
\]
or
\[
\varepsilon + ||u_{n,\varepsilon}||^2_{D^{1,2} (\mathbb{R}^3)} - \lambda_{n,\varepsilon}\, ||u_{n,\varepsilon}||^2_{L^2_{g_n}} = 0,
\]
since $u_{n,\varepsilon}$ are normalized in $X$. Taking the limit $n \to \infty$ and $\varepsilon \to \varepsilon_*$, we get that
\[
\varepsilon_* + ||u_*||^2_{D^{1,2} (\mathbb{R}^3)} - \lambda_*\, ||u_*||^2_{L^2_{g_n}} = 0.
\]
Thus, $(u_*,\lambda_*) \ne (0,0)$. Moreover, (\ref{bab4a}) implies that
\[
\varepsilon_* <u_*,\phi>_X + <u_*,\phi>_{D^{1,2} (\mathbb{R}^3)} - \lambda_*\, <u_*,\phi>_{L^2_{g_n}} = 0,
\]
for any $\phi \in X$, thus $(u_*,\lambda_*)$ is an eigenpair belonging to $\phi_0$ and that $(u_{n,\varepsilon}, \lambda_{n,\varepsilon}) \to (u_*,\lambda_*)$ in $X \times
\mathbb{R}$.

Assume now that $\varepsilon_* =0$. We will prove that $(u_{n,\varepsilon}, \lambda_{n,\varepsilon}) \to (u_0,\lambda_0)$ in $X \times \mathbb{R}$. Assume that $u_* \equiv
0$, then
\[
\alpha_{n,\varepsilon} = <u_{n,\varepsilon}, u_{n,0}> \to 0,
\]
and (\ref{hfinal}) implies that also
\[
||\eta_{n,\varepsilon}||^2_{X} \to 0.
\]
However, these two limits contradict (\ref{hestimate}). Thus, $u_* \not\equiv 0$ and follows directly that $(u_*, \lambda_*) \equiv (u_0,\lambda_0)$ and since both are
normalized in $X$ we deduce that $u_{n,\varepsilon} \to u_0$, in $X$.

Finally we conclude that, for every $n > N(\delta)$ and $0 \leq \varepsilon \leq s_*$, every sequence $(u_{n,\varepsilon}, \lambda_{n,\varepsilon}) \in \phi_n$, has a convergent subsequence to some $(u_{0,\varepsilon}, \lambda_{0,\varepsilon}) \in \phi_0$, in $X \times \mathbb{R}$, i.e., the set $\Phi$ is compact in $X \times \mathbb{R}$.  \vspace{0.2cm}

If now $\Phi$ is compact in $X \times \mathbb{R}$, then it is immediate that $(u_{n,0},\lambda_{n,0})$ tends to $(u_{0},\lambda_{0})$ in $X \times \mathbb{R}$ and the proof is completed. $\blacksquare$ \vspace{0.2cm}

%
%
\section{Bifurcation Results}
In this section, we will prove the existence of a global branch of solutions for the problem (\ref{eq1.1})
bifurcating from the principal eigenvalue of the linear problem (\ref{eq1.2}). To do this, we will
first prove the existence of global branches for the problems (\ref{eq1.3}) bifurcating from eigenvalues of
the linear problem (\ref{eq1.4}). Then we will leave $\varepsilon \to 0$. \vspace{0.2cm} \\
From Theorem \ref{seigen} we have the existence of a continuous curve,
$\phi_{0} (\varepsilon) = (u_{0,\varepsilon}, \lambda_{0,\varepsilon})$, of eigenpairs for the corresponding perturbed problems (\ref{eq1.4}).
\begin{theorem} \label{eglobal} Let $\varepsilon$ be a
positive real number which is small enough. Then, for each $\varepsilon$, the
eigenvalue $\lambda_{0,\varepsilon}$ of the problem
(\ref{eq1.4}) is a bifurcation point of the perturbed problem
(\ref{eq1.3}). More precisely, the closure of the set
\[
C=\{ (\lambda ,u) \in \mathbb{R} \times X :
(\lambda,u)\;\; \mbox{solves}\;\; (\ref{eq1.3}),\;
u \not\equiv 0 \},
\]
possesses a maximal continuum (i.e. connected branch) of
solutions,\ $\mathcal{C}_{\varepsilon}$,\ such that\
$(\lambda_{0,\varepsilon},0) \in \mathcal{C}_{\varepsilon}$\ and\
$\mathcal{C}_{\varepsilon}$\ either:

(i)\ meets infinity in\ $\mathbb{R} \times X$\ or,

(ii)\ meets\ $(\lambda^*,0)$,\ where\ $\lambda^* \ne \lambda$\ is
also an eigenvalue of\ $L$.
\end{theorem}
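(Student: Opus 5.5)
The plan is to recast (\ref{eq1.3}) as a fixed point problem $u = G_\varepsilon(\lambda,u) = \lambda L_\varepsilon u + H_\varepsilon(\lambda,u)$ in $X$ and then apply Theorem \ref{rab} at the simple eigenvalue supplied by Theorem \ref{seigen}. Fix $\varepsilon\in(0,s)$. The form
\[
a_\varepsilon(u,\phi)=\varepsilon <u,\phi>_X + \int_{\mathbb{R}^3}\nabla u\cdot\nabla\phi\, dx
\]
is a bounded, coercive inner product on $X$, with $a_\varepsilon(u,u)\ge \varepsilon\|u\|_X^2$. It is therefore equivalent to $<\cdot,\cdot>_X$, and I will use it as the inner product on $X$ (call the result $X_\varepsilon$).

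By Riesz representation in $X_\varepsilon$, define $L_\varepsilon u$ by $a_\varepsilon(L_\varepsilon u,\phi)=\int h u\phi$. Define $H_\varepsilon(\lambda,u)$ by $a_\varepsilon(H_\varepsilon(\lambda,u),\phi)=\int H(\lambda,u)\frac{u^2}{u^2+\theta}\phi$. With these definitions, solutions of (\ref{eq1.3}) are exactly the fixed points of $G_\varepsilon$.

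\emph{Compactness of $L_\varepsilon$.} Since $h\in L^{3/2}(\mathbb{R}^3)$, the imbedding (\ref{Dcmim}) $D^{1,2}\hookrightarrow L^2_h$ is compact, and $X\subset D^{1,2}$ continuously. Hence $u\mapsto \int hu\,\cdot$ is a compact map $X\to X_\varepsilon^*$, so $L_\varepsilon$ is compact and self-adjoint in $X_\varepsilon$. Its characteristic values are exactly the eigenvalues of (\ref{eq1.4}). Theorem \ref{seigen} says $\lambda_{0,\varepsilon}$ is simple, and since $L_\varepsilon$ is self-adjoint, geometric and algebraic multiplicity coincide. This is the index-type simplicity Theorem \ref{rab} needs.

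Because $\lambda$ enters linearly, I will use the standard Rabinowitz variant for $u=\lambda L u+H(\lambda,u)$ (equivalently, work with $\mu=1/\lambda$). This makes alternative (ii) refer to another characteristic value of $L_\varepsilon$, that is, another eigenvalue of (\ref{eq1.4}).

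\emph{Compactness of $H_\varepsilon$ and the $o(\|u\|)$ condition.} This is the step I expect to be the main obstacle. The nonlinearity contains second derivatives through $\nabla|\nabla u|^2\cdot\nabla u$, and for general $u\in X$ these are only controlled in $C^2\cap H^4$-type norms. The regularizing factor is $\varepsilon<\cdot,\cdot>_X$: the solution map of $a_\varepsilon$ gains derivatives relative to the data, since it is a sixth-order-type elliptic operator. I would argue as follows.

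First, for $u$ bounded in $X\hookrightarrow C^4$, the function $H(\lambda,u)\frac{u^2}{u^2+\theta}$ is bounded in a weighted space in which pairing against $\phi\in X$ is compact. One source of compactness is the factor $h$ decaying like $|x|^{-4}$ in the first term. For the second term, I would use $|\nabla u|\in H^5$ together with radial compactness ($H^1_r\hookrightarrow L^p$ compactly for $2<p<6$) and the Strauss decay of radial functions. Together these should give that $u_n\rightharpoonup u$ in $X$ implies $H(\lambda_n,u_n)\frac{u_n^2}{u_n^2+\theta}\to H(\lambda,u)\frac{u^2}{u^2+\theta}$ in $X^*$. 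Lemma \ref{regularity} indicates the extra regularity available.

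Second, the smallness condition. Each term of $H$ is at least cubic in $(u,\nabla u,D^2u)$; the damping factor $\frac{u^2}{u^2+\theta}\le u^2/\theta$ only helps. Using $\|u\|_{C^4}\le C\|u\|_X$, I expect a bound $|\int H\frac{u^2}{u^2+\theta}\phi|\le C\|u\|_X^3\|\phi\|_X$, which gives $\|H_\varepsilon(\lambda,u)\|/\|u\|\to0$ locally uniformly in $\lambda$. If direct control of the second-derivative term in $X^*$ fails, my fallback is to integrate by parts. The term $-\frac12\frac{\nabla|\nabla u|^2\cdot\nabla u}{1+|\nabla u|^2}$ can be rewritten as a divergence plus lower-order terms, moving a derivative onto $\phi$, where it is absorbed by the $X$-norm.

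With $L_\varepsilon$ compact, $H_\varepsilon$ compact and $o(\|u\|)$, and $\lambda_{0,\varepsilon}$ a simple characteristic value, Theorem \ref{rab} gives the maximal continuum $\mathcal{C}_\varepsilon\ni(\lambda_{0,\varepsilon},0)$ satisfying alternative (i) or (ii). Finally I would note that norms of $X_\varepsilon$ and $X$ are equivalent for fixed $\varepsilon$, so connectedness and unboundedness transfer to $\mathbb{R}\times X$.
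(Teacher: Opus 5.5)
Your proof is correct. Its overall strategy matches the paper's: apply Theorem \ref{rab} at the simple eigenvalue supplied by Theorem \ref{seigen}. Your operator formulation, however, differs from the paper's in a way that matters.

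The paper keeps the original inner product of $X$ and writes $G(\lambda,u)=\frac{1}{\varepsilon}L(\lambda,u)+\frac{1}{\varepsilon}\mathcal{H}(\lambda,u)$. Its $L$ contains both $\int\nabla u\cdot\nabla\phi$ and $-\lambda\int hu\phi$, and the paper simply asserts that this $L$ is compact. It then quotes Theorem \ref{seigen} for simplicity of $\lambda_{0,\varepsilon}$ as an eigenvalue of $\frac{1}{\varepsilon}L$, and controls $\mathcal{H}$ through (\ref{H}).

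You instead absorb the Dirichlet form into the inner product $a_\varepsilon$. Only the weight term becomes the linear operator, and $\lambda$ enters linearly as $\lambda L_\varepsilon$, which is the form Rabinowitz's theorem actually takes. Compactness of $L_\varepsilon$ is then immediate, since it factors through $D^{1,2}\hookrightarrow L^2_h$. Because $L_\varepsilon$ is $a_\varepsilon$-self-adjoint, algebraic simplicity comes for free and you do not need the separate range argument in Theorem \ref{seigen}.

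This is more than cosmetic. The map $u\mapsto\int\nabla u\cdot\nabla\phi$ is not compact on $X$. The dilations $u_R(x)=R^{-1/2}u(x/R)$, $R\to\infty$, converge weakly to $0$ in $X$ while $\int|\nabla u_R|^2$ stays fixed. The paper's $G$ also carries the $\varepsilon$-term with the sign opposite to (\ref{eq1.3}). Your formulation avoids both problems.

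Your cubic bound $|\int H\frac{u^2}{u^2+\theta}\phi|\le C\|u\|_X^3\|\phi\|_X$, obtained from $X\hookrightarrow C^4\cap L^\infty$, is a cleaner route to the $o(\|u\|)$ condition than (\ref{H}).

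The one step you leave at the level of ``should give'' is the compactness of $H_\varepsilon$. It closes as you suggest:
\begin{itemize}
\item For radial $u$ one has $\nabla|\nabla u|^2\cdot\nabla u=2u'^2u''$.
\item The functions $u'(|x|)$ and $u''(|x|)$ are radial and bounded in $H^1(\mathbb{R}^3)$ by $\|u\|_X$.
\item The compact radial imbedding into $L^{18/5}$ then gives strong convergence of this term in $L^{6/5}\hookrightarrow X^*$ along weakly convergent sequences.
\item The bounded factors $\frac{1}{1+u'^2}$ and $\frac{u^2}{u^2+\theta}$ are handled by dominated convergence.
\item The $h$-term is controlled by the decay of $h$.
\end{itemize}
So the integration-by-parts fallback is unnecessary.
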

\emph{Proof} The proof follows directly from Rabinowitz's Theorem
\ref{rab}. We sketch the proof. For any fixed $\varepsilon>0$, small enough, we consider the operator $G: X \to X$, as
\[
<G(\lambda,u),\phi>_X = \frac{1}{\varepsilon}\, \left ( \int_{\mathbb{R}^3} \nabla u \cdot \nabla \phi\, dx - \lambda\, \int_{\mathbb{R}^3} h\, u\, \phi\, dx - \int_{\mathbb{R}^3} H(\lambda,u)\, \frac{u^2\, \phi}{u^2 +\theta}\, dx\right ).
\]
Observe that $G$ may be written as $G(\lambda,u) = \frac{1}{\varepsilon} L(\lambda,u) + \frac{1}{\varepsilon} \mathcal{H}(\lambda,u)$,
where $L$ is the linear and compact operator
\[
<L(\lambda,u),\phi>_X = \int_{\mathbb{R}^3} \nabla u \cdot \nabla \phi\, dx - \lambda \int_{\mathbb{R}^3} h\, u\, \phi\, dx,
\]
while $\mathcal{H}$ is the nonlinear operator
\[
<\mathcal{H}(\lambda,u),\phi>_X = \int_{\mathbb{R}^3} H(\lambda,u)\, \frac{u^2\, \phi}{u^2 + \theta}\, dx,
\]
satisfying
\begin{equation} \label{H}
<\mathcal{H}(\lambda,u),v>_X \leq C\, ||u||_{X} \sum_{|\alpha|=1} ||D^{\alpha}u||_{L^{12/5} (\mathbb{R}^3)}\, ||D^{\alpha}u||_{L^{12/5} (\mathbb{R}^3)}\, ||\phi||_{L^6 (\mathbb{R}^3)}.
\end{equation}
The first derivatives of $u$ belong in $H^1 (\mathbb{R}^3)$ which implies that $H$ is compact, such that $\mathcal{H}(\lambda,u) = o(||u||_X)$, as $||u||_X \to 0$, for any  $v \in X$. In addition, from Theorem \ref{seigen}, $\lambda_{0,\varepsilon}$ is a simple eigenvalue of $\frac{1}{\varepsilon} L$.
It is clear that the conditions of Rabinowitz's Theorem are satisfied and the result follows.\ $\blacksquare$. \vspace{0.2cm}
%
%
%
%
%

Next we prove some results, concerning the asymptotic behavior of the branches $C_{\varepsilon}$. Assume a sequence $(\lambda_{\varepsilon_n}, u_{\varepsilon_n}) \in
\mathcal{C}_{\varepsilon_n}$, and set
\begin{equation}\label{gbif}
g_{\varepsilon_n} (x) = h + \frac{1}{\lambda_{\varepsilon_n}}\, H(\lambda_{\varepsilon_n},u_{\varepsilon_n})\, \frac{u_{\varepsilon_n}}{u^2_{\varepsilon_n} + \theta},
\end{equation}
We consider the following eigenvalue problems with weight $g_{\varepsilon_n}$:
\begin{equation}\label{eq3.3}
\int_{\mathbb{R}^3} \nabla v_{n} \cdot \nabla \psi\, dx = \lambda_{n}\, \int_{\mathbb{R}^3} g_{\varepsilon_n}\, v_{n}\, \psi\, dx,
\end{equation}
or
\begin{equation} \label{eq3.3a}
-\Delta v_{n} = \lambda_{n}\, g_{\varepsilon_n}\, v_{n},\;\;\; x \in \mathbb{R}^3.
\end{equation}
From Lemma \ref{regularity} we have that $u_{\varepsilon_n} \in C^{6}(\mathbb{R}^3)$, so $g_{\varepsilon_n} \in C^{4}(\mathbb{R}^3)$. Moreover, $g_{\varepsilon_n} \in L^{3/2}(\mathbb{R}^3) \cap H^4 (\mathbb{R}^3)$ and the derivatives of order up to $4$ belong to $L^{3/2} (\mathbb{R}^3)$. Lemma \ref{princeig} implies the existence of a principal eigenvalue $\lambda_{n}$ with the associated normalized eigenfunction $v_{n}$ being positive and belonging in $X$. We denote by $g_0$ the weight function of (\ref{eq1.2}), i.e., $g_0 \equiv h$.
\begin{lemma} \label{lemgc1}
Let $(\lambda_{\varepsilon_n}, u_{\varepsilon_n}) \in \mathcal{C}_{\varepsilon_n}$ and assume that $(\lambda_{\varepsilon_n}, u_{\varepsilon_n})$ is a bounded sequence in $X$, such that
$u_{\varepsilon_n} \rightharpoonup 0$ in $X$. Then, $(\lambda_n, v_{n}) \to (\lambda_0, u_0)$ in $\mathbb{R} \times X$.
\end{lemma}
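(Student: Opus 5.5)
We will show that the weights $g_{\varepsilon_n}$ of (\ref{gbif}) converge to $h=g_0$ strongly enough that the principal eigenpairs of (\ref{eq3.3a}) converge first in $\mathbb{R}\times D^{1,2}(\mathbb{R}^3)$, and then upgrade this to convergence in $X$ using the equation and elliptic regularity. Set $w_n := g_{\varepsilon_n}-h = \lambda_{\varepsilon_n}^{-1} H(\lambda_{\varepsilon_n},u_{\varepsilon_n})\, u_{\varepsilon_n}/(u^2_{\varepsilon_n}+\theta)$.

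\emph{Step 1 (the perturbation is small).} Since $u_{\varepsilon_n}$ is bounded in $X$ and $u_{\varepsilon_n}\rightharpoonup 0$, the radial compactness ($H^1_r\hookrightarrow L^p$, $2<p<6$, applied to the first derivatives, which are bounded in $H^5_r$) gives $D^\alpha u_{\varepsilon_n}\to 0$ strongly in $L^p$ for $|\alpha|=1$ and $2<p<6$. Interpolating with the $H^5$ bound, we get $\nabla u_{\varepsilon_n}\to 0$ in $W^{k,p}$ for $k\le 4$. The embedding $X\hookrightarrow C^4$ together with radial decay then gives $\nabla u_{\varepsilon_n}\to 0$ uniformly as well. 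We also need $\lambda_{\varepsilon_n}$ to be bounded away from $0$. Since $\lambda_{\varepsilon_n}$ is bounded, we pass to a subsequence, and we expect to obtain a positive lower bound from testing (\ref{eq1.3}) with $u_{\varepsilon_n}$ and using the variational characterization (\ref{varchar}).

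Each term of $H$ contains a factor $|\nabla u|^2$ or $\nabla|\nabla u|^2\cdot\nabla u$. The factor $u/(u^2+\theta)$ is bounded by $1/(2\sqrt\theta)$, and its derivatives are controlled by $\theta$ and by derivatives of $u$. It follows that $w_n\to 0$ in $L^{3/2}(\mathbb{R}^3)$, and also in $C^1$ and in the $H^4$ and $W^{4,3/2}$ type norms used in Lemma \ref{princeig}.

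\emph{Step 2 (convergence in $D^{1,2}$).} Because $\int |w_n| v^2 \le \|w_n\|_{L^{3/2}}\|v\|_{L^6}^2\le C\|w_n\|_{L^{3/2}}\|v\|^2_{D^{1,2}}$, the Rayleigh quotients in (\ref{varchar}) converge uniformly on the unit sphere of $D^{1,2}$. This gives $\lambda_n\to\lambda_0$. Next, the normalized $v_n$ are bounded in $D^{1,2}$. By the compact embedding (\ref{Dcmim}), a weak limit $v$ satisfies $-\Delta v=\lambda_0 h v$ with $v\ge0$, and $v\not\equiv0$ because $\int g_{\varepsilon_n}v_n^2=\lambda_n^{-1}\|v_n\|^2_{D^{1,2}}$ stays bounded below. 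Simplicity in Lemma \ref{princeig} then gives $v=c\,u_0$. Convergence of the norms upgrades weak convergence to strong convergence in $D^{1,2}$.

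\emph{Step 3 (upgrade to $X$).} This is the main obstacle, because the $X$ norm involves up to six derivatives. We write $-\Delta(v_n-cu_0)=\lambda_n g_{\varepsilon_n}v_n-\lambda_0 h cu_0$ and estimate the right-hand side in $H^4$, exactly as in the proof of Lemma \ref{princeig}. The derivatives up to order 4 of $g_{\varepsilon_n}$ converge in $L^{3/2}$ and $L^\infty$ by Step 1, and $v_n\to cu_0$ in $L^6$ and in $C^0_{loc}$. These bounds control $D^\beta(g_{\varepsilon_n}v_n - h cu_0)$ in $L^2$. A bootstrap then controls the lower derivatives of $v_n$ inductively by the higher-order Hölder pairings $L^{3/2}\cdot L^6$ and $L^\infty\cdot L^2$. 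Elliptic $L^2$ estimates for $\Delta$ on $\mathbb{R}^3$ (Fourier multiplier) turn this into convergence of $\nabla(v_n-cu_0)$ in $H^5$, that is $v_n\to cu_0$ in $X$. Since both functions are normalized in $X$ and positive, $c=1$, which yields $(\lambda_n,v_n)\to(\lambda_0,u_0)$ in $\mathbb{R}\times X$. Uniqueness of the limit makes the whole sequence converge.
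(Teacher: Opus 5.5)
Your argument is correct in substance, but it is organized differently from the paper's. The paper argues by compactness and contradiction. It shows separately that $\lambda_{\varepsilon_n}$ stays away from $0$ and that $\lambda_n$ is bounded above and below, by testing the Rayleigh quotient for $g_{\varepsilon_n}$ with $u_0$ and using (\ref{H0}). It then takes a weak limit $v_*$ of the $X$-normalized $v_n$ and splits into two cases. If $v_*\not\equiv 0$, the limit is identified by positivity. If $v_*\equiv 0$, this is ruled out by a qualitative regularity argument showing that the $D^{1,2}$-normalizations $\bar v_n$ stay bounded in $X$. You instead treat $g_{\varepsilon_n}$ as a perturbation of $h$ that is small in $L^{3/2}$. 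This gives $\lambda_n\to\lambda_0$ at once from uniform convergence of the Rayleigh quotients on the unit sphere of $D^{1,2}(\mathbb{R}^3)$, and you replace the case split by a quantitative elliptic bootstrap that proves strong convergence in $X$. That is what your route buys. In the paper's case $v_*\not\equiv 0$, the conclusion drawn from ``both normalized in $X$'' tacitly requires $\|v_*\|_X=1$, i.e.\ no loss of norm in the weak limit, and your Step 3 is exactly what supplies it. The price is Step 1: you need $g_{\varepsilon_n}-h\to 0$ in $H^4$- and $W^{4,3/2}$-type norms. These involve sixth derivatives of $u_{\varepsilon_n}$, which are only bounded, so each top-order factor must be paired with a factor of $\nabla u_{\varepsilon_n}$ or $D^2u_{\varepsilon_n}$ tending to $0$ in $L^\infty$. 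The paper needs only uniform bounds on $g_{\varepsilon_n}$ in these norms together with $\int (g_{\varepsilon_n}-h)\phi^2\,dx\to 0$.

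Three points should be made explicit.

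First, the ``expected'' lower bound does follow from the test you propose. For $u=u_{\varepsilon_n}\not\equiv 0$, take $\phi=u$ in (\ref{eq1.3}) and use (\ref{varchar}) and $|\nabla|\nabla u|^2\cdot\nabla u|\le 2|D^2u|\,|\nabla u|^2$. This gives $\|\nabla u\|_{L^2}^2\le \frac{\lambda_{\varepsilon_n}}{\lambda_0}\|\nabla u\|_{L^2}^2+C(\|\nabla u\|_{L^\infty}^2+\|D^2u\|_{L^3}\|\nabla u\|_{L^\infty})\|\nabla u\|_{L^2}^2$, hence $\liminf\lambda_{\varepsilon_n}\ge\lambda_0$. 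If $u_{\varepsilon_n}\equiv 0$, the weight is just $h$.

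Second, Step 2 must be run with the $D^{1,2}$-normalizations $\bar v_n$. For the $X$-normalized $v_n$, a lower bound on $\|v_n\|_{D^{1,2}}$ is precisely the nontrivial point (the paper's case $v_*\equiv 0$). It only becomes available after Step 3 gives $\bar v_n\to\bar u_0$ in $X$, whence $v_n=\bar v_n/\|\bar v_n\|_X\to u_0$.

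Third, the pairing $L^{3/2}\cdot L^6$ does not land in $L^2$; use $L^3\cdot L^6$. For the top-order term $D^4(g_{\varepsilon_n}-h)\,\bar v_n$, use $\|D^4(g_{\varepsilon_n}-h)\|_{L^2}\|\bar v_n\|_{L^\infty}$. Here $\|\bar v_n\|_{L^\infty}$ is controlled by the radial bound $|\bar v_n(r)|\le C r^{-1/2}\|\nabla\bar v_n\|_{L^2}$ together with the $L^\infty$ bound on $\nabla \bar v_n$ obtained at the earlier bootstrap levels.
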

\emph{Proof} Let $(\lambda_{\varepsilon_n}, v_n)$ be the principal eigenpairs of (\ref{eq3.3a}). Note that $v_n$ are normalized in $X$, hence $v_n$ is a bounded sequence in $X$, and also in $C^4 (\mathbb{R}^3)$, such that $v_n \rightharpoonup v_*$ in $X$. Moreover, $(\lambda_{\varepsilon_n}, u_{\varepsilon_n})$ satisfy (\ref{eq1.3}) i.e.,
\begin{equation}\label{eq3.1}
\varepsilon_n\, ||u_{\varepsilon_n}||^2_X + \int_{\mathbb{R}^3} |\nabla u_{\varepsilon_n}|^2\, dx = \lambda_{\varepsilon_n}\, \int_{\mathbb{R}^3} h\, u_{\varepsilon_n}^2\, dx + \int_{\mathbb{R}^3} H(\lambda_{\varepsilon_n},u_{\varepsilon_n})\, \frac{u_{\varepsilon_n}^2}{u_{\varepsilon_n}^2 + \theta}\, u_{\varepsilon_n}\, dx.
\end{equation}

We prove first that $\lambda_{\varepsilon_n}$ is bounded away from zero. Assume the opposite, then dividing (\ref{eq3.1}) by $||u_{\varepsilon_n}||^2_{D^{1,2}(\mathbb{R}^3)}$ and setting
\[
\tilde{u}_{\varepsilon_n} = \frac{u_{\varepsilon_n}}{||u_{\varepsilon_n}||_{D^{1,2}(\mathbb{R}^3)}},
\]
we obtain that
\begin{equation} \label{eq2.17}
\varepsilon_n ||\tilde{u}_{\varepsilon_n}||^2_X + ||\tilde{u}_{\varepsilon_n}||^2_{D^{1,2}(\mathbb{R}^3)} = \lambda_{\varepsilon_n}\, \int_{\mathbb{R}^3} h\, \tilde{u}_{\varepsilon_n}^2\, dx + \int_{\mathbb{R}^3} H(\lambda_{\varepsilon_n},u_{\varepsilon_n})\, \frac{u_{\varepsilon_n}}{u_{\varepsilon_n}^2 + \theta}\, \tilde{u}_{\varepsilon_n}^2\, dx.
\end{equation}
Since $\tilde{u}_{\varepsilon_n}$ are normalized in $D^{1,2}(\mathbb{R}^3)$, they converge weakly in $D^{1,2}(\mathbb{R}^3)$ to some function $\tilde{u}_*$. The compact imbedding
$D^{1,2}(\mathbb{R}^3) \hookrightarrow L_{h}^{2}(\mathbb{R}^3)$ implies that $\tilde{u}_{\varepsilon_n} \to \tilde{u}_*$, in $L_{h}^{2}(\mathbb{R}^3)$. Moreover, $u_{\varepsilon_n}$ is a bounded sequence in $X$, thus in $C^4 (\mathbb{R}^3)$. Thus,
\begin{eqnarray} \label{H0}
\int_{\mathbb{R}^3} \frac{H\, u_{\varepsilon_n}}{u_{\varepsilon_n}^2+\theta}\, \tilde{u}_{\varepsilon_n}^2\, dx &=& \frac{1}{||u_{\varepsilon_n}||_{D^{1,2} (\mathbb{R}^3)}} \int_{\mathbb{R}^3} \frac{H\, u_{\varepsilon_n}^2}{u_{\varepsilon_n}^2+\theta}\, \tilde{u}_{\varepsilon_n}\, dx \leq \frac{1}{||u_{\varepsilon_n}||_{D^{1,2} (\mathbb{R}^3)}} \int_{\mathbb{R}^3} |H|\, |\tilde{u}_{\varepsilon_n}|\, dx \nonumber \\
&\leq& C\, \frac{1}{||u_{\varepsilon_n}||_{D^{1,2} (\mathbb{R}^3)}} \left ( \int_{\mathbb{R}^3} |\nabla u_{\varepsilon_n}|^2\, |\tilde{u}_{\varepsilon_n}|\, dx + \sum_{|\alpha|=1} \int_{\mathbb{R}^3} |D^{\alpha} u_{\varepsilon_n}|\, |\nabla u_{\varepsilon_n}|\, |\tilde{u}_{\varepsilon_n}|\, dx \right ) \nonumber \\
&\leq& \left ( ||\nabla u_{\varepsilon_n}||_{L^3 (\mathbb{R}^3)} + \sum_{|\alpha|=1} ||D^{\alpha} u_{\varepsilon_n}||_{L^3 (\mathbb{R}^3)}  \right ) ||\tilde{u}_{\varepsilon_n}||_{L^6 (\mathbb{R}^3)} \to 0,
\end{eqnarray}
since the first derivatives of $u_{\varepsilon_n}$ belong in $H_r^1 (\mathbb{R}^3)$ and the compact embedding $H_r^1 (\mathbb{R}^3)$ to $L^p (\mathbb{R}^3)$, $2<p<6$, hold.
Taking now the limit as $\varepsilon_n \to 0$ in (\ref{eq2.17}) we get
\begin{equation} \label{eq2.19}
\lim_{\varepsilon_n \to \infty} \left ( A_{\varepsilon_n} \right ) + 1 = \lambda_*\, \int_{\mathbb{R}^3} h\, g_{\varepsilon_n}\, |\tilde{u}_*|^2\, dx,
\end{equation}
where
\[
A_{\varepsilon_n} = \varepsilon_n ||\tilde{u}_{\varepsilon_n}||^2_X.
\]
Note that $\lambda_* =0$ and this is a contradiction, so $\lambda_{\varepsilon_n} \nrightarrow 0$.

Next we note that $\lambda_{n}$ cannot also tend to zero; Assume the opposite; from (\ref{eq3.3}) we get that
\begin{eqnarray} \label{eq4h1a}
1 = \int_{\mathbb{R}^3} |\nabla \bar{v}_{n}|^2\, dx = \lambda_{n}\, \int_{\mathbb{R}^3} g_{\varepsilon_n}\, |\bar{v}_{n}|^2\, dx  \nonumber \\ =  \lambda_{n}\, \int_{\mathbb{R}^3} h\, |\bar{v}_{n}|^2\, dx + \int_{\mathbb{R}^3} H(\lambda_{\varepsilon_n},u_{\varepsilon_n})\, \frac{u_{\varepsilon_n}}{u^2_{\varepsilon_n} + \theta}\, |\bar{v}_{n}|^2\, dx,
\end{eqnarray}
where $\bar{v}_n$ denotes the normalization of $v_n$ in $D^{1,2} (\mathbb{R}^3)$. However, the last integral tends to zero, as in (\ref{H0}). Hence (\ref{eq4h1a}) implies a contradiction. Moreover, $\lambda_{n}$ are bounded; The variational characterization of $\lambda_{n}$ implies that
\[
\lambda_0 = \frac{\int_{\mathbb{R}^3} |\nabla u_{0}|^2\, dx}{\int_{\mathbb{R}^3} g_{0}\, |u_0|^2\, dx} = \frac{\int_{\mathbb{R}^3} |\nabla u_{0}|^2\, dx}{\int_{\mathbb{R}^3}  g_{\varepsilon_n}\, |u_0|^2\, dx} \frac{\int_{\mathbb{R}^3} g_{\varepsilon_n}\, |u_0|^2\, dx}{\int_{\mathbb{R}^3} h\, g_{0}\, |u_0|^2\, dx}  \geq \lambda_{n}. \frac{\int_{\mathbb{R}^3} g_{\varepsilon_n}\, |u_0|^2\, dx}{\int_{\mathbb{R}^3} g_{0}\, |u_0|^2\, dx}.
\]
However, see (\ref{H0}),
\begin{equation}\label{eq4h0}
\int_{\mathbb{R}^3} g_{\varepsilon_n}\, |u_0|^2\, dx \to \int_{\mathbb{R}^3} g_{0}\, |u_0|^2\, dx.
\end{equation}
Hence, $\lambda_{n}$ is a bounded sequence and we conclude that $\lambda_{n}$ converges, up to a subsequence, to some $\lambda_* >0$. \vspace{0.2cm}

Assume that $v_* \not\equiv 0$. From (\ref{eq3.3}) we get that
\begin{equation} \label{eq4h1}
0 < \int_{\mathbb{R}^3} |\nabla v_{n}|^2\, dx = \lambda_{n}\, \int_{\mathbb{R}^3} g_{\varepsilon_n}\, |v_{n}|^2\, dx.
\end{equation}
Thus, taking the limit as $n \to \infty$, from (\ref{eq4h1}) we get that
\[
\int_{\mathbb{R}^3} |\nabla v_{*}|^2\, dx = \lambda_{*}\, \int_{\mathbb{R}^3} g_{0}\, |v_{*}|^2\, dx.
\]
Then the positivity of $v_*$, implies that $\lambda_* = \lambda_0$ and $v_* \equiv u_0$. Since $v_n$ and $v_*$ are both normalized in $X$, we conclude that $v_{n} \to u_0$ in $X$. \vspace{0.2cm}

Assume that $v_* \equiv 0$. This means that
\begin{equation} \label{eq4h1c}
||v_{n}||_{D^{1,2} (\mathbb{R}^3)} \to 0.
\end{equation}
Denote by $\bar{v}_n$ the normalization of $v_n$ in $D^{1,2} (\mathbb{R}^3)$. We claim that $\bar{v}_{n} \to v_* \equiv \bar{v}_0$, in $D^{1,2} (\mathbb{R}^3)$, where $\bar{v}_0$ denotes the normalization of $v_0$ in $D^{1,2} (\mathbb{R}^3)$: From (\ref{eq3.3}) we have that
\[
1 = \int_{\mathbb{R}^3} |\nabla \bar{v}_{n}|^2\, dx = \lambda_{n}\, \int_{\mathbb{R}^3} g_{\varepsilon_n}\, |\bar{v}_{n}|^2\, dx,
\]
or
\begin{equation} \label{eq4h1aa}
1 = \int_{\mathbb{R}^3} |\nabla \bar{v}_{n}|^2\, dx =
\lambda_{n}\, \int_{\mathbb{R}^3} g_{0}\, |\bar{v}_{n}|^2\, dx\,
+ \lambda_{n}\,  \int_{\mathbb{R}^3} ( g_{\varepsilon_n}-g_{0} )\, |\bar{v}_{n}|^2\, dx.
\end{equation}
However, from (\ref{H0}) we have that the last integral of (\ref{eq4h1aa}) tends to zero. Moreover, the compact (\ref{Dcmim}) implies that $\bar{v}_{n} \to v_* \equiv \bar{v}_0$, in $D^{1,2} (\mathbb{R}^3)$.

Observe now that $\bar{v}_{n}$ satisfy
\[
- \Delta \bar{v}_{n} = \lambda_{n}\, g_{\varepsilon_n}\, \bar{v}_{n},\;\;\; x \in \mathbb{R}^3.
\]
Since $u_{\varepsilon_n} \in C^4 (\mathbb{R}^3)$ and $D^{\alpha} u_{\varepsilon_n} \in H^1 (\mathbb{R}^3)$, $|\alpha| \leq 5$, we conclude that $\bar{v}_{n}$ is bounded in $X$, which contradicts (\ref{eq4h1c}).

Finally, $(\lambda_n, v_{n}) \to (\lambda_0, u_0)$ in $\mathbb{R} \times X$ and the proof is completed. $\blacksquare$ \vspace{0.2cm}
%
%
%
%
%
%
\begin{lemma} \label{basic argument}
Let $\varepsilon_n > 0$, be a sequence that tends to $0$, as $n \to
\infty$, and $(\lambda_{\varepsilon_n}, u_{\varepsilon_n}) \in \mathcal{C}_{\varepsilon_n}$. Assume
that $(\lambda_{\varepsilon_n}, u_{\varepsilon_n})$ is a bounded sequence in
$\mathbb{R} \times X$, such that $u_{\varepsilon_n}$ being nonnegative. Denote by $\lambda_*$ and $u_*$ the limits (up to a subsequence) $\lambda_{\varepsilon_n} \to
\lambda^*$ and $u_{\varepsilon_n} \rightharpoonup u_*$ (weakly) in $X$. Then, $u_{\varepsilon_n}$ converges strongly in $u_*$ in $X$, and either
\begin{enumerate}
  \item $(\lambda_*, u_*)$ is a nontrivial solution of (\ref{eq1.1}), or
  \item $u_* \equiv 0$. In this case, $\lambda_* = \lambda_0$ and $u_{\varepsilon_n} \to 0$ in $X$, such that the normalization of $u_{\varepsilon_n}$ converges to $u_0$ in
      $X$.
\end{enumerate}
\end{lemma}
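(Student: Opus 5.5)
We plan to pass to the limit in the weak formulation (\ref{eq1.3}) and then split into the two cases $u_*\not\equiv 0$ and $u_*\equiv 0$. The case $u_*\equiv 0$ will be handled through the auxiliary weighted eigenvalue problems (\ref{eq3.3a}) with weights $g_{\varepsilon_n}$ from (\ref{gbif}), together with Lemma \ref{lemgc1} and Proposition \ref{mainprop}.

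\emph{Step 1 (passing to the limit).} Since $(\lambda_{\varepsilon_n},u_{\varepsilon_n})$ is bounded in $\mathbb{R}\times X$, we may extract a subsequence with $\lambda_{\varepsilon_n}\to\lambda_*$ and $u_{\varepsilon_n}\rightharpoonup u_*$ in $X$. Because $X\hookrightarrow C^4(\mathbb{R}^3)$ continuously, and first derivatives are bounded in $H^5_r$, the radial compact embedding $H^1_r\hookrightarrow L^p$ ($2<p<6$) gives strong convergence of $\nabla u_{\varepsilon_n}$ and of its derivatives in $L^{12/5}$ and $L^3$. Then the estimate (\ref{H}) yields
\[
\int H(\lambda_{\varepsilon_n},u_{\varepsilon_n})\frac{u_{\varepsilon_n}^2}{u_{\varepsilon_n}^2+\theta}\phi\,dx \to \int H(\lambda_*,u_*)\frac{u_*^2}{u_*^2+\theta}\phi\,dx .
\]
The compact embedding (\ref{Dcmim}) handles the term $\lambda\int h u\phi$. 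Finally, $\varepsilon_n<u_{\varepsilon_n},\phi>_X\to 0$ because $u_{\varepsilon_n}$ is bounded. Hence $(\lambda_*,u_*)$ solves (\ref{eq1.1}).

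\emph{Step 2 (strong convergence).} We test (\ref{eq1.3}) with $\phi=u_{\varepsilon_n}$ and (\ref{eq1.1}) with $\phi=u_*$. Subtracting gives
\[
\varepsilon_n\|u_{\varepsilon_n}\|_X^2+\|u_{\varepsilon_n}\|^2_{D^{1,2}}\to\|u_*\|^2_{D^{1,2}},
\]
so $u_{\varepsilon_n}\to u_*$ in $D^{1,2}$. To upgrade this to convergence in $X$, we write the equation as $-\Delta u_{\varepsilon_n}+\varepsilon_n(\cdots)=\lambda_{\varepsilon_n} g_{\varepsilon_n}u_{\varepsilon_n}$. We then bootstrap as in Lemma \ref{princeig}: the right-hand sides converge in the norms controlling $D^\alpha u$ up to order six, since $g_{\varepsilon_n}$ is controlled in $C^4\cap H^4$ by Lemma \ref{regularity}. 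This shows that the higher derivatives of $u_{\varepsilon_n}$ converge in $L^2$.

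This is the step I expect to be the main obstacle. The singular term $\varepsilon_n<u,\phi>_X$ prevents uniform elliptic estimates. I would first try an energy argument: test with $u_{\varepsilon_n}-u_*$ in the $X$ inner product and exploit the sign of the $\varepsilon_n$ term. If that fails, I would instead use $\|u_{\varepsilon_n}\|_X\to\|u_*\|_X$, obtained from the weak limit together with lower semicontinuity, in the same way Lemma \ref{lem2.1} obtained norm convergence.

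\emph{Step 3 (trivial limit).} If $u_*\not\equiv0$ we are in alternative 1. If $u_*\equiv0$, we set $w_n=u_{\varepsilon_n}/\|u_{\varepsilon_n}\|_X$. Each $w_n$ is a nonnegative eigenfunction of (\ref{eqsweight}) with weight $g_{\varepsilon_n}$, parameter $\varepsilon_n$ and eigenvalue $\lambda_{\varepsilon_n}$. By Lemma \ref{lemgc1}, the principal eigenpairs $(\lambda_n,v_n)$ of (\ref{eq3.3a}) converge to $(\lambda_0,u_0)$ in $\mathbb{R}\times X$. Proposition \ref{mainprop} then makes $\Phi=\bigcup_n\varphi_n$ compact, and all its accumulation points as $\varepsilon\to0$ equal $(\lambda_0,u_0)$.

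It remains to show that $(\lambda_{\varepsilon_n},w_n)\in\varphi_n(\varepsilon_n)$. Since $w_n\ge0$ and $w_n\not\equiv0$, it cannot be $L^2_{g_{\varepsilon_n}}$-orthogonal to the positive eigenfunction $u_{n,\varepsilon_n}$. Eigenfunctions belonging to distinct eigenvalues of this self-adjoint problem are orthogonal, so $\lambda_{\varepsilon_n}=\lambda_{n,\varepsilon_n}$, and simplicity (Theorem \ref{seigen}) gives $w_n=u_{n,\varepsilon_n}$. Compactness of $\Phi$ then yields $\lambda_*=\lambda_0$ and $w_n\to u_0$ in $X$, which is alternative 2.
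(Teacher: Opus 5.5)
Your Step~3 (the case $u_*\equiv0$) follows the paper's argument: normalize, use Lemma~\ref{lemgc1} and Proposition~\ref{mainprop}, and place $w_n$ on $\varphi_n$ because two nonnegative eigenfunctions cannot be $L^2_{g_{\varepsilon_n}}$-orthogonal. Two small points there. First, that non-orthogonality needs essentially $g_{\varepsilon_n}>0$, which you use silently. It is not automatic, since $H$ contains the sign-indefinite term $-\frac12\nabla|\nabla u|^2\cdot\nabla u/(1+|\nabla u|^2)$; the paper asserts positivity and gives a way around it in Remark~\ref{altproof}. Second, you should say explicitly that $w_n\to u_0\neq0$ strongly, together with $u_{\varepsilon_n}=\|u_{\varepsilon_n}\|_X\,w_n\rightharpoonup0$, forces $\|u_{\varepsilon_n}\|_X\to0$.

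The genuine gap is Step~2, which is the whole content of the first alternative: strong convergence in $X$ when $u_*\not\equiv0$. Your $D^{1,2}$ convergence is fine, but neither proposed upgrade works.

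\emph{The bootstrap.} It is circular exactly at the top order that $\|\cdot\|_X$ measures. The right-hand side contains $\nabla|\nabla u_{\varepsilon_n}|^2\cdot\nabla u_{\varepsilon_n}$. The four derivatives needed to reach $D^6u_{\varepsilon_n}$ therefore produce $D^6u_{\varepsilon_n}$ times factors built from $\nabla u_{\varepsilon_n}$, and $D^6u_{\varepsilon_n}$ is only known to converge weakly in $L^2$. This is harmless when $u_*\equiv0$, where those factors tend to zero, but not here. Also, Lemma~\ref{regularity} is qualitative for each fixed solution and gives no $C^4\cap H^4$ bound on $g_{\varepsilon_n}$ uniform in $n$.

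\emph{The fallback.} It runs in the wrong direction. Weak lower semicontinuity yields $\|u_*\|_X\le\liminf\|u_{\varepsilon_n}\|_X$, while you need $\limsup\|u_{\varepsilon_n}\|_X\le\|u_*\|_X$. Lemma~\ref{lem2.1} did not get norm convergence from semicontinuity. It got it from (\ref{sim5}), where the bracket multiplied by $1/\varepsilon$ is nonnegative by the spectral gap (\ref{sim5a}) of a linear problem. Your energy test with $w_n=u_{\varepsilon_n}-u_*$ gives the analogue
\[
\|w_n\|_X^2=-\langle u_*,w_n\rangle_X-\frac{1}{\varepsilon_n}\Bigl(\|w_n\|^2_{D^{1,2}(\mathbb{R}^3)}-\int_{\mathbb{R}^3}(F_n-F_*)\,w_n\,dx\Bigr),
\]
where $F_n,F_*$ are the right-hand sides of (\ref{eq1.3}) and (\ref{eq1.1}). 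The first term tends to $0$, but nothing gives the bracket a sign. That would require coercivity of the linearization at $u_*$, and $F$ contains second derivatives.

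\emph{What the paper does.} It attacks exactly the missing upper bound. It assumes $\|u_{\varepsilon_n}\|_X\to M>\|u_*\|_X$, normalizes, and appeals to uniqueness of the nonnegative eigenfunction of the frozen-weight problem (\ref{eq3.5}). Be aware that this step is terse as written: the weak limit of the normalized sequence is just $u_*/M$, a multiple of $u_*$. So that route also still needs a genuine strong-convergence (top-order) estimate for the normalized sequence.
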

\emph{Proof} Since $(\lambda_{\varepsilon_n}, u_{\varepsilon_n}) \in \mathcal{C}_{\varepsilon_n}$, they satisfy
\begin{equation}\label{eq3.4a}
\varepsilon <u_{\varepsilon, n},\psi>_X + \int_{\mathbb{R}^3} \nabla u_{\varepsilon,n} \cdot \nabla \psi\, dx = \lambda_{\varepsilon,n} \int_{\mathbb{R}^3} g_{\varepsilon_n} (x)\, u_{\varepsilon,n}\, \psi\, dx,\;\;\; \mbox{for any}\,\,\, \psi \in X.
\end{equation}

Assume that, $u_* \equiv 0$. Then, we consider the eigenvalue problems (\ref{eq3.3}). From Theorem
\ref{seigen} we have the existence of a curve, $\phi_{n}$, of eigenpairs for the corresponding to $g_{\varepsilon_n}$ perturbed problems:
\begin{equation}\label{eq3.4}
\varepsilon <v_{\varepsilon, n},\psi>_X + \int_{\mathbb{R}^3} \nabla v_{\varepsilon,n} \cdot \nabla \psi\, dx = \lambda_{\varepsilon,n} \int_{\mathbb{R}^3} g_{\varepsilon_n} (x)\, v_{\varepsilon,n}\, \psi\, dx,\;\;\; \mbox{for any}\,\,\, \psi \in X.
\end{equation}
However, $u_{\varepsilon_n} \rightharpoonup 0$ in $X$, so Lemma \ref{lemgc1} implies that $v_{n} \to u_0$ in $X$.. Then, Proposition \ref{mainprop} implies that there exists an interval $[0,s_*]$, depending on $u_0$, such that the set $\Phi$ is compact in $\mathbb{R} \times X$.

Choose now, $\varepsilon_n$ small enough, such that $\varepsilon_n \in [0,s_*]$ and denote by $\bar{u}_{\varepsilon_n}$ the normalization of $u_{\varepsilon_n}$ in $X$. Then, dividing (\ref{eq3.1}) by $||u_{\varepsilon_n}||_X$ we get that $\bar{u}_{\varepsilon_n}$ is a solution of (\ref{eq3.4}) with $\varepsilon_n \in [0,s_*]$. We claim that $(\lambda_{\varepsilon_n}, \bar{u}_{\varepsilon_n}) \in \phi_n$, for each $n$ big enough. Assume the opposite; then problem (\ref{eq3.4}) for the same $\varepsilon_n$ and $g_{\varepsilon_n}$, admits two nonnegative eigenfunctions corresponding to different eigenvalues. These eigenvalues should be orthogonal in $L^{2}_{g_{\varepsilon_n}} (\mathbb{R}^3)$, which is impossible, since $g_{\varepsilon_n}$ is positive. (See Remark \ref{altproof}). Thus, $(\lambda_{\varepsilon_n}, \bar{u}_{\varepsilon_n})$ must belong to $\phi_n$, for each $n$ big enough. This means that $\lambda_* = \lambda_0$ and $\bar{u}_{\varepsilon_n} \to u_0$ (strongly) in $X$. However, this is true if and only if $||u_{\varepsilon_n}||_X \to 0$. Thus, $u_{\varepsilon_n}$ converges strongly to $0$ in $X$. This proves the second alternative of the Lemma. \vspace{0.2cm}

Assume now that $u_* \not\equiv 0$. Taking the limit as $n \to \infty$, (\ref{eq3.4a}) implies that
\begin{equation} \label{eq3.5}
\int_{\mathbb{R}^3} \nabla u_* \cdot \nabla \psi\, dx = \lambda_{*}\, \int_{\mathbb{R}^3} g_*\, u_*\, \psi\, dx,
\end{equation}
where
\begin{equation} \label{g}
g_{*} (x) = h + \frac{1}{\lambda_*}\, H(\lambda_*,u_*)\, \frac{u_*}{u_* + \theta}.
\end{equation}
However, $u_* \in X$ and is nonnegative. Thus, is the unique positive eigenfunction of the above eigenvalue problem. Suppose now that $\liminf ||u_{\varepsilon_n}||_X > ||u_*||_X$, then there exists a subsequence of $u_{\varepsilon_n}$, again denote it by $u_{\varepsilon_n}$, such that
$||u_{\varepsilon_n}||_X \to M > ||u_*||_X$. Set $\tilde{u}_{\varepsilon_n} = u_{\varepsilon_n}/||u_{\varepsilon_n}||_X$. Is clear that $\tilde{u}_{\varepsilon_n}$ converges
weakly to some $\tilde{u}_{*}$ in $X$. Then, diving (\ref{eq3.1}) by $||u_{\varepsilon_n}||_X$ and taking the limit, we obtain that $\tilde{u}_{*}$ is also a nonnegative
eigenfunction of (\ref{eq3.5}), which is impossible. Thus, $\liminf ||u_{\varepsilon_n}||_X = ||u_*||_X$ and $u_{\varepsilon_n} \to u_*$, strongly, in $X$. This proves the
first alternative of the Lemma. Thus, the proof is completed.\ $\blacksquare$ \vspace{0.2cm}
%
%
\begin{remark} \label{altproof}
In the proof of the second alternative, we used that $g_{\varepsilon_n}$ is positive. However, this is not something crucial. Next we give a different proof for the second
alternative; Assume that $u_* \equiv 0$. We will prove that $\lambda_{*} = \lambda_0$, thus $(\lambda_{\varepsilon_n}, \bar{u}_{\varepsilon_n}) \in \phi_n$, (using the
notation of the above proof).

From Lemma \ref{lemgc1} we have that $\lambda_* \ne 0$ and from (\ref{eq2.19}) we have that $\tilde{u}_* \not\equiv 0$ and the limit of $A_{\varepsilon}$ is finite. Since
\[
||\varepsilon \tilde{u}_{\varepsilon_n}||^2_{X} = \varepsilon_n\, A_{\varepsilon_n} \to 0,
\]
we have that $\varepsilon_n \tilde{u}_{\varepsilon_n}$ converges in $X$ and this limit must be zero. Then, for any $\phi \in X$, we have that
\begin{equation}\label{eq2.19a}
\varepsilon <\tilde{u}_{\varepsilon_n}, \phi>_X + \int_{\mathbb{R}^3} \nabla \tilde{u}_{\varepsilon_n} \cdot \nabla \phi\, dx = \lambda_{\varepsilon_n} \int_{\mathbb{R}^3}
g_{\varepsilon_n}\, \tilde{u}_{\varepsilon_n}\, \phi\, dx,
\end{equation}
which in the limit gives
\[
\int_{\mathbb{R}^3} \nabla \tilde{u}_* \cdot \nabla \phi\, dx = \lambda_* \int_{\mathbb{R}^3} \tilde{u}_*\, \phi\, dx.
\]
However, this makes sense if and only if $(\tilde{u}_*,\lambda_*)=(\tilde{u}_{0}, \lambda_{0})$, where $\tilde{u}_{0}$ is the normalized in $D^{1,2}(\mathbb{R}^3)$ eigenfunction of
(\ref{eq1.5}) corresponding to the principal eigenvalue $\lambda_{0}$. Thus, $(\lambda_{\varepsilon_n}, \bar{u}_{\varepsilon_n}) \in \phi_n$ and the rest of the proof follows
exactly as in the proof of the above Lemma.
\end{remark}
In the next result, we actually prove that for $\varepsilon \to 0$ the nonpositive solutions $(\lambda,u)$ belonging to the branches
$\mathcal{C}_{\varepsilon}$, must tend to infinity in $\mathbb{R} \times X$.
\begin{lemma} \label{vanish}
Let $(\lambda_n, u_n)$ be a sequence belonging to
$C_{\varepsilon_n}$, such that $u_n(x) \geq 0$ and there exist
$\{a^i_n\} \subset \mathbb{R}^3$, with $u_n(a^i_n)=0$. Then, $(\lambda_n, u_n)$ is unbounded in $\mathbb{R} \times
X$.
\end{lemma}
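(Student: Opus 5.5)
We argue by contradiction: suppose $(\lambda_n,u_n)$ is bounded in $\mathbb{R}\times X$. We may take $\varepsilon_n \downarrow 0$; this is the regime in which the Lemma is used, and Lemma \ref{basic argument} applies. Since each $u_n \ge 0$, Lemma \ref{basic argument} gives, after passing to a subsequence, $\lambda_n\to\lambda_*$ and $u_n\to u_*$ strongly in $X$. There are then two alternatives to rule out.

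\emph{Case (i): $u_* \not\equiv 0$ solves (\ref{eq1.1}).} Here $u_*\ge 0$. Writing (\ref{eq1.1}) as the linear equation $-\Delta u_* = \lambda_* g_*\, u_*$, with $g_*$ as in (\ref{g}), the weight $g_*$ is continuous and bounded. This uses $X\hookrightarrow C^4(\mathbb{R}^3)$ and the factor $u_*/(u_*^2+\theta)$, which is bounded because $\theta>0$. The strong maximum principle, applied locally on balls as a Harnack inequality for $-\Delta w + c(x) w = 0$ with $c = -\lambda_* g_*$ bounded, then gives $u_*>0$ everywhere. Next we exploit the zeros $a_n$ of $u_n$. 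If $\{a_n\}$ has a bounded subsequence, then $a_n\to a$. Since $X\hookrightarrow C^4$ and the convergence is strong in $X$, we get $u_n\to u_*$ uniformly, so $u_*(a)=\lim u_n(a_n)=0$, a contradiction. If instead $|a_n|\to\infty$, uniform convergence alone gives nothing, since both functions vanish at infinity. In this case we use that $a_n$ is an interior minimum of $u_n$: $u_n(a_n)=0$ and $\nabla u_n(a_n)=0$. The points are radial, so they are spheres $|x|=r_n\to\infty$, and $u_n$ is radial. We would use the ODE $(r^2 u_n')' = -r^2(\lambda_n g_n u_n + \varepsilon_n\text{-terms})$ to show that $u_n$ vanishing with zero derivative at $r_n$ forces $u_n\equiv 0$ beyond $r_n$ or for all $r$. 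For $\varepsilon_n>0$ the equation is a higher-order uniformly elliptic equation, so this uniqueness argument has to be carried out for the full perturbed equation.

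\emph{Case (ii): $u_*\equiv 0$.} Lemma \ref{basic argument} then gives $\lambda_*=\lambda_0$, $u_n\to 0$, and $\bar u_n = u_n/\|u_n\|_X \to u_0$ in $X$. Moreover, by the proof of that lemma, $(\lambda_n,\bar u_n)\in\varphi_n(\varepsilon_n)$, the curve associated with the weights $g_{\varepsilon_n}$. Since $u_0>0$ and $X\hookrightarrow C^4$, we have $\bar u_n\to u_0$ uniformly, which contradicts $\bar u_n(a_n)=0$ whenever $a_n$ stays bounded. The unbounded case again needs the decay comparison described above.

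I expect the main obstacle to be the escape of the zeros $a_n$ to infinity, where uniform convergence cannot separate a positive limit from zero. My first attempt would be a Hopf-type or ODE uniqueness argument at a touching zero of the radial $u_n$, using that $u_n\ge0$ has a double zero there. A fallback is to compare the radial profiles from below with the explicit decay $u_0(r)\sim c/r$ of the principal eigenfunction for $h=(1+|x|^2)^{-2}$, via a comparison function $\min(u_0,\cdot)$ that is uniform in $n$.
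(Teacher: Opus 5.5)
Your treatment of the case where the zeros $a_n$ stay bounded is correct and is exactly the paper's proof. Via Lemma \ref{basic argument} one gets convergence in $X$, hence uniformly, to a limit that is positive everywhere: $u_0$ after normalization, or $u_*$ by the strong maximum principle for $-\Delta u_* = c(x)u_*$ with $c$ bounded. A bounded sequence of zeros then produces a zero of that limit. But, as you note yourself, this gives nothing when $|a_n|\to\infty$, and your proposal leaves that case open, so the argument is incomplete. The paper's proof passes over the same case without comment. Strong convergence in $X$ cannot close it. For radial functions one only has $|u_n(r)-u_*(r)|\le C r^{-1/2}\|u_n-u_*\|_X$ and $|u_n'(r)-u_*'(r)|\le C r^{-1}\|u_n-u_*\|_X$. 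Meanwhile $u_*$ and $u_*'$ (or $u_0$, $u_0'$) decay like $r^{-1}$ and $r^{-2}$. So a double zero of $u_n$ at $r_n\to\infty$ is fully compatible with $u_n\to u_*$ in $X$. The missing ingredient is a lower bound for $u_n$ near infinity that is uniform in $n$.

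Neither of your suggested repairs supplies it.

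\emph{The ODE argument.} Your ODE treats the $\varepsilon_n$-contribution as a lower-order perturbation. But $\varepsilon_n\langle u_n,\phi\rangle_X$ involves derivatives up to order $6$ of both $u_n$ and $\phi$, so it is the principal part. The radial form of (\ref{eq1.3}) is therefore a twelfth-order ODE. The conditions $u_n(r_n)=u_n'(r_n)=0$ fix only two of the twelve Cauchy data at $r_n$, which yields neither uniqueness nor $u_n\equiv 0$.

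\emph{The comparison argument.} Comparing $u_n$ from below with $\min(u_0,\cdot)$ would need a comparison principle for $\varepsilon_n A-\Delta-\lambda_n g_{\varepsilon_n}$, where $A$ is the order-$12$ operator generated by $\langle\cdot,\cdot\rangle_X$. Such principles fail for higher-order operators. This is precisely why the paper stresses that the maximum-principle argument is unavailable for (\ref{eq1.3}). The maximum principle is usable only in the limit equation, where it gives $u_*>0$ but says nothing about where the zeros of $u_n$ go.

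To finish you would need genuinely new, $n$-uniform information on the behaviour of the perturbed solutions at infinity.
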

\emph{Proof} Assume the opposite i.e., there exists $M>0$, such
that $||(\lambda_n, u_n)||_{\mathbb{R} \times X} <M$,
for any $n \in \mathbb{N}$. Then, $\lambda_n \to \lambda_*$ and $u_n \rightharpoonup u_*$, up to some subsequence. From Lemma \ref{basic argument}, we have that if $u_*
\equiv 0$, the normalization of $u_n$ must converge to $u_0$, in $X$ and thus in $L^{\infty}$, which contradicts the positivity of $u_0$. If on the other, $u_* \not\equiv 0$,
we get that $u_n \to u_*$, strongly in $X$ and $u_*$ is a solution of (\ref{eq1.5}), with $g_*$ given by (\ref{g}). The contradiction now follows from maximum principle.
$\blacksquare$
\begin{lemma} \label{notubounded} Let $\varepsilon_n \to 0$. Then, every branch $\mathcal{C}_{\varepsilon_n}$ cannot be uniformly
bounded, i.e., we cannot find $M>0$ such that $||(\lambda,
u)||_{\mathbb{R} \times X}<M$, for any $(\lambda,u)
\in \mathcal{C}_{\varepsilon_n}$.
\end{lemma}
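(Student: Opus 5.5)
We argue by contradiction. Suppose there is $M>0$ such that $||(\lambda,u)||_{\mathbb{R} \times X}<M$ for every $(\lambda,u) \in \mathcal{C}_{\varepsilon_n}$ and every $n$. Since each $\mathcal{C}_{\varepsilon_n}$ is then bounded, alternative (i) of Theorem \ref{eglobal} fails. Hence alternative (ii) must hold for every $n$: $\mathcal{C}_{\varepsilon_n}$ meets some $(\lambda^*_{\varepsilon_n},0)$, where $\lambda^*_{\varepsilon_n} \ne \lambda_{0,\varepsilon_n}$ is another eigenvalue of (\ref{eq1.4}). The uniform bound gives $|\lambda^*_{\varepsilon_n}| < M$. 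The plan is to show that this is incompatible with the local structure near $(\lambda_{0},0)$ provided by Theorem \ref{seigen} and Lemma \ref{basic argument}.

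\textbf{Step 1 (sign change along the branch).} Near the bifurcation point $(\lambda_{0,\varepsilon_n},0)$, the solutions on $\mathcal{C}_{\varepsilon_n}$ have the form $u = t\,u_{0,\varepsilon_n} + o(t)$ in $X$, hence in $C^4(\mathbb{R}^3)$. Because $u_{0,\varepsilon_n}$ is positive (Theorem \ref{seigen}), these solutions have a definite sign on the part $\mathcal{C}^+_{\varepsilon_n}$. Near $(\lambda^*_{\varepsilon_n},0)$, the normalized solutions approach an eigenfunction belonging to $\lambda^*_{\varepsilon_n}$. That eigenfunction is $L^2_h$-orthogonal to the positive eigenfunction $u_{0,\varepsilon_n}$, since $\lambda^*_{\varepsilon_n} \neq \lambda_{0,\varepsilon_n}$ and $h>0$, so it must change sign. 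By connectedness of $\mathcal{C}_{\varepsilon_n}$ and continuity of the embedding $X \hookrightarrow C^4$, there is therefore a point $(\lambda_n,u_n) \in \mathcal{C}_{\varepsilon_n}$ with $u_n \geq 0$, $u_n \not\equiv 0$, and a point $a_n$ with $u_n(a_n)=0$. This point sits on the boundary of the set of positive solutions. One caveat: since the domain is $\mathbb{R}^3$, positivity can also be lost at infinity. I would handle this by working with the set where $\inf_{|x|\le R}u>0$ for every $R$, or more simply by taking the first parameter point along the continuum where strict positivity fails and extracting a zero there.

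\textbf{Step 2 (contradiction).} The sequence $(\lambda_n,u_n)$ lies in $\mathcal{C}_{\varepsilon_n}$, is nonnegative, vanishes at some $a_n$, and is bounded by $M$. This contradicts Lemma \ref{vanish}, which says such a sequence must be unbounded in $\mathbb{R}\times X$. An alternative route is to apply Lemma \ref{basic argument} directly: either $u_n \to 0$ with normalizations converging to $u_0>0$, which contradicts $u_n(a_n)=0$ via $L^\infty$ convergence, or $u_n \to u_*$ strongly, where $u_*$ solves (\ref{eq3.5}) with the weight (\ref{g}) and is nonnegative with a zero; the strong maximum principle then forces $u_*\equiv 0$, which is again excluded.

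\textbf{Main obstacle.} The delicate part is Step 1: producing, on each bounded branch, a nonnegative nontrivial solution with an actual zero. Two issues arise. First, loss of positivity might happen only at infinity, because all functions in $X$ decay. Second, the zeros $a_n$ might escape to infinity, which breaks the $L^\infty$ argument. My first attempt would be to use the radial ODE structure: a radial $C^2$ solution that is nonnegative and has an interior double zero vanishes identically by ODE uniqueness. So a sign change must occur through a simple interior zero at finite radius, and the $a_n$ can be taken as first zeros of the radial profile. If the zeros do drift to infinity, I would compare the decay rates with that of $u_0$, which behaves like $c/|x|$, to still reach a contradiction.
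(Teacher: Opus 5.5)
Your argument is the same as the paper's. Assume a uniform bound, deduce that alternative (ii) of Theorem \ref{eglobal} holds for every $\varepsilon_n$, obtain sign-changing solutions on each branch, use connectedness through $X\hookrightarrow C^4(\mathbb{R}^3)$ to extract a nonnegative solution $u_n$ with a zero and $\|u_n\|_X<M$, and contradict Lemma \ref{vanish}.

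The paper gives no more detail on your Step 1 than you do. It simply asserts that connectedness in $L^\infty$ produces such a $u_n$. The proof of Lemma \ref{vanish} also tacitly assumes that the zeros stay in a bounded set. So both difficulties you raise under ``Main obstacle'' are real, and the paper leaves them open too. The first one already undermines your claim that solutions near $(\lambda_{0,\varepsilon_n},0)$ have a definite sign: $u=t\,u_{0,\varepsilon_n}+o(t)$ in $C^4$ does not give $u>0$ on $\mathbb{R}^3$, because $u_{0,\varepsilon_n}$ decays at infinity.

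The remedy you sketch would not close this gap. For $\varepsilon>0$, the term $\varepsilon\langle u,\phi\rangle_X$ makes (\ref{eq1.3}) a twelfth-order equation, so its radial form is a twelfth-order ODE. A nonnegative solution with a double zero therefore need not vanish identically, and no maximum principle is available at that level. Second-order uniqueness and the maximum principle only become usable for the limit problem (\ref{eq1.1}), reached through Lemma \ref{basic argument}. That passage to the limit is exactly where bounded zeros are needed, so the argument remains incomplete at the same point as the paper's.
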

\emph{Proof} Assume the opposite. Then, for some $\varepsilon_n \to 0$
there exists $M>0$, such that $||(\lambda,u)||_{\mathbb{R} \times
X}<M$, for any $(\lambda,u) \in
\mathcal{C}_{\varepsilon_n}$. Then, we have that the branches are
compact (i.e., the second alternative of Theorem \ref{eglobal}
holds), which means that must contain solutions of (\ref{eq1.3})
that change sign. The connectness of these branches in
$X$ and thus in $L^{\infty}(\mathbb{R}^3)$ implies that
there exist $(\lambda_{n}, u_n) \in \mathcal{C}_{\varepsilon_n}$ with $u_n$
vanishing somewhere in $\mathbb{R}^3$ and being positive
elsewhere, with $||u_n||_{X} < M$. However, this is
impossible from Lemma \ref{vanish}.\ $\blacksquare$ \vspace{0.2cm}

Next result, which is immediate, will be used in the proof of Theorem \ref{global}.
\begin{lemma} \label{eclosed}
Fix $\varepsilon$ small enough. Then the branch $\mathcal{C}_{\varepsilon}$ is a closed set in $\mathbb{R} \times
X $.
\end{lemma}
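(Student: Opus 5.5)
By Theorem \ref{eglobal}, $\mathcal{C}_{\varepsilon}$ is a maximal continuum contained in the closure $\overline{C}$ of the set $C$ of nontrivial solutions of (\ref{eq1.3}). A maximal connected subset of a topological space is a connected component, and connected components are always closed in that space. So it suffices to show that $\overline{C}$, taken together with the points $(\lambda,0)$ where $\lambda$ is an eigenvalue of (\ref{eq1.4}), is closed in $\mathbb{R}\times X$. Then $\mathcal{C}_{\varepsilon}$, as a component of a closed set, is closed in $\mathbb{R}\times X$.

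I would nevertheless also check the closedness directly, sequentially, so that later uses in the Whyburn argument are transparent. Let $(\lambda_k,u_k)\in \mathcal{C}_{\varepsilon}$ with $(\lambda_k,u_k)\to(\lambda,u)$ strongly in $\mathbb{R}\times X$, where $\varepsilon$ is fixed. The first step is to pass to the limit in (\ref{eq1.3}) for each fixed $\phi\in X$.
\begin{itemize}
\item The terms $\varepsilon<u_k,\phi>_X$, $\int\nabla u_k\cdot\nabla\phi$ and $\lambda_k\int h u_k\phi$ converge by strong convergence in $X$ and the continuous embedding $X\subset D^{1,2}(\mathbb{R}^3)$.
\item For the nonlinear term, $X\hookrightarrow C^4(\mathbb{R}^3)$ continuously and the first derivatives lie in $H^5$. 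Hence $H(\lambda_k,u_k)\,u_k^2/(u_k^2+\theta)\to H(\lambda,u)\,u^2/(u^2+\theta)$ in a norm dual to $L^6$, by the same H\"older estimate as (\ref{H}). This uses that $u\mapsto u^2/(u^2+\theta)$ is smooth and bounded because $\theta>0$ is fixed.
\end{itemize}
Thus $(\lambda,u)$ solves (\ref{eq1.3}). If $u\not\equiv 0$, the limit lies in $C$. If $u\equiv0$, dividing by $\|u_k\|_X$ and using the compactness of $L$ together with $\mathcal{H}=o(\|u\|_X)$ shows, in the standard way, that $\lambda$ is an eigenvalue of (\ref{eq1.4}). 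In either case the limit lies in $\overline{C}$.

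The last step is connectedness: $\mathcal{C}_{\varepsilon}\cup\{(\lambda,u)\}$ is connected, since adding a limit point to a connected set preserves connectedness. Maximality of $\mathcal{C}_{\varepsilon}$ then forces $(\lambda,u)\in\mathcal{C}_{\varepsilon}$.

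The only mildly delicate step is the continuity of $\mathcal{H}$ in $X$. I expect it to follow from the polynomial-type structure of $H$ and the embedding into $C^4$, with $\varepsilon$ and $\theta$ fixed so that no uniformity issues arise.
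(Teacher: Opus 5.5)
Your first paragraph is the argument the paper has in mind when it calls this lemma immediate (it gives no further proof): $\mathcal{C}_{\varepsilon}$ is a component of $\overline{C}$, which is closed by definition, so $\mathcal{C}_{\varepsilon}$ is closed in $\overline{C}$ and hence in $\mathbb{R}\times X$, and nothing needs to be checked about adjoining the points $(\lambda,0)$. The sequential check is superfluous, because a limit of points of $\mathcal{C}_{\varepsilon}\subset\overline{C}$ lies in $\overline{C}$ by closedness alone, after which your maximality step finishes; there is no need to pass to the limit in (\ref{eq1.3}), and in any case $\lambda$ being an eigenvalue of (\ref{eq1.4}) would not by itself place $(\lambda,0)$ in $\overline{C}$.
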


We now prove the main result of this Section; the
existence of a global branch of solutions for (\ref{eq1.1})
bifurcating from the principal eigenvalue $\lambda_0$ of
(\ref{eq1.2}). \vspace{0.3cm} \\
\emph{Proof of Theorem \ref{global}}\ \ We apply Whyburn's Lemma
\ref{whyburn} in order to prove that $\mathcal{C}_{\varepsilon}$
converge in $\mathbb{R} \times X$, and this limit $C_0$ is the
global branch of solutions of (\ref{eq1.1})
bifurcating from the principal eigenvalue $\lambda_0$ of
(\ref{eq1.2}). For some $R>0$ and some sequence $\varepsilon_n \to
0$, as $n \to \infty$, we define the sets $A_n$ as follows:
\[
A_n = \biggr \{ \bar{B}_R (\lambda_0 , 0) \cap
\mathcal{C}_{\varepsilon_n} \biggr \} \subset \mathbb{R} \times
X,
\]
For every $n \in \mathbb{N}$, these sets are connected (see
Theorem \ref{eglobal}) and closed (see Lemma \ref{eclosed}). Next,
we claim that $\liminf_{n \to \infty} \{A_n\}$ is not empty. To see this we consider the points
$(\lambda_{0,\varepsilon_n},0)$ belonging to $\mathcal{C}_{\varepsilon_n}$. From Theorem \ref{seigen}
we have that $(\lambda_{0,\varepsilon_n},0) \to (\lambda_0,0)$, hence,
\begin{equation}\label{inf}
\liminf_{n \to \infty} \{A_n\} \not\equiv \emptyset.
\end{equation}
Alternately, we may assume a sequence belonging to the set
\[
\biggr \{ \partial B_R (\lambda_0 , 0) \cap \mathcal{C}_{\varepsilon_n} \biggr \} \subset A_n.
\]
Each sequence belonging in this set is bounded and Lemma \ref{vanish}, Lemma
\ref{basic argument} imply that this sequence converges strongly in $\mathbb{R} \times X$, to some $(\lambda_*,u_*)$ which satisfy
(\ref{eq1.1}). Hence (\ref{inf}) is true.

It remains to prove that the set $\bigcup_{n \in \mathbb{N}} A_n$
is relatively compact i.e., every sequence in $A_n$ contains a
convergent subsequence. Let $(\lambda_n,u_n) \in \bigcup_{n \in
\mathbb{N}} A_n$, then the sequence $(\lambda_n,u_n)$ is bounded
in $\mathbb{R} \times X$ and so (up to a subsequence) we have that $\lambda_n \to
\lambda_*$ and $u_n \rightharpoonup u_*$ in $X$.
However, since $u_n$ is bounded, Lemma \ref{vanish} implies that
$u_n$ are positive in $\mathbb{R}^3$ and Lemma
\ref{basic argument} implies that $(\lambda_n,u_n)$ converges strongly in $\mathbb{R} \times X$, either to $(\lambda_0,0)$ or to $(\lambda_*,u_*)$ which satisfy
(\ref{eq1.1}). Thus, $\bigcup_{n \in \mathbb{N}} A_n$ is relatively compact.

Then, we leave $R \to \infty$ in
order to obtain that $\mathcal{C}_{\varepsilon_n}  \to C_0$, in
$\mathbb{R} \times X$, for any $R \in \mathbb{R}$.
In order to prove that $C_0$ is unbounded in $\mathbb{R} \times
X$, we may use, the sequences
$\{(\lambda_n,u_n) \in \mathcal{C}_{\varepsilon_n} \cap
\partial B_R (\lambda_0,0)\}$ which converge to some
$(\lambda_*,u_*)$ in $\mathbb{R} \times X$, satisfies (\ref{eq1.1}), for
any $R>0$. $\blacksquare$ \vspace{0.2cm}

%

%
%
\section{Bifurcation for The Mean Curvature Equations} \label{secmc}
\emph{Proof of Theorem \ref{thmgeneralcase}}\ \ We apply Whyburn's Lemma \ref{whyburn} in order to prove that $C_{\theta_n} \to C_0$, where $C_0$ will denote the global
branch of solutions of (\ref{mc}) bifurcating from $\lambda_0$. For some $R>0$ and some sequence $\theta_n \to 0$, as $n \to \infty$, we define the sets $A_n$, as
follows:
\[
A_n = \biggr \{ \bar{B}_R (\lambda_0 , 0) \cap
\mathcal{C}_{\theta_n} \biggr \} \subset \mathbb{R} \times
X,
\]
For every $n \in \mathbb{N}$, these sets are connected and closed. Moreover, we note that $\liminf_{n \to
\infty} \{A_n\}$ is not empty since $(\lambda_0,0) \in C_{\varepsilon_n}$, for every $n$.

It remains to prove that the set $\cup_{n \in \mathbb{N}} A_n$ is relatively compact i.e., every sequence in $A_n$ contains a convergent subsequence. Let $(\lambda_n,u_n) \in
\cup_{n \in \mathbb{N}} A_n$. The sequence $(\lambda_n,u_n)$ is bounded in $\mathbb{R} \times X$ and so (up to a subsequence), we have that $\lambda_n \to \lambda_*$ and $u_n
\rightharpoonup u_*$ in $X$. Is sufficient to prove that if $u_* \equiv 0$ then $u_n \to u_* \equiv 0$ in $X$ and $\lambda_n \to \lambda_0$. \vspace{0.2cm} \\
Let $u_* \equiv 0$, then setting $\tilde{u}_n$ to be the normalization of $u_n$ in $D^{1,2} (\mathbb{R}^3)$, from (\ref{eq1.3}) we obtain that
\begin{equation}\label{eqg.5}
1=\int_{\mathbb{R}^3} |\nabla \tilde{u}_n|^2\, dx = \lambda_n\, \int_{\mathbb{R}^3} h\, |\tilde{u}_n|^2\, dx + \int_{\mathbb{R}^3} \frac{H\, u_n}{u_n^2+\theta_n}\, \tilde{u}_n^2\, dx.
\end{equation}
Observe that $\tilde{u}_n \rightharpoonup \tilde{u}_*$ in $D^{1,2} (\mathbb{R}^3)$, such that
\[
\int_{\mathbb{R}^3} h\, |\tilde{u}_n|^2\, dx \to \int_{\mathbb{R}^3} h\, |\tilde{u}_*|^2\, dx.
\]
Moreover, as in (\ref{H0}) we get that
\[
\int_{\mathbb{R}^3} \frac{H\, u_n}{u_n^2+\theta_n}\, \tilde{u}_n^2\, dx \to 0,
\]
as $n \to \infty$. Thus $\lambda_* \ne 0$ and  (\ref{eqg.5}) implies that $\tilde{u}_* \ne 0$, such that
\[
\int_{\mathbb{R}^3} |\nabla \tilde{u}_*|^2\, dx = \lambda_*\, \int_{\mathbb{R}^3} |\tilde{u}_*|^2\, dx.
\]
Since $\tilde{u}_*$ is nonnegative, $(\lambda_*, \tilde{u}_*)$ should coincide with the principal eigenpair $(\lambda_0, \tilde{u}_0)$ ($\tilde{u}_0$ denotes the normalization of $u_0$ in $D^{1,2}(\mathbb{R}^3)$. \vspace{0.2cm}

Observe now that (\ref{eq1.3}) may be seen as
\begin{equation}\label{eqq.6}
-\Delta u_n = \lambda_n\, g_n\, u_n,
\end{equation}
with
\[
g_n = h + \frac{1}{\lambda_n} f\, \frac{H\, u_n}{u_n^2+\theta_n}.
\]
Denote by $\bar{u}_n$ the normalization of $u_n$ in $X$. Then, $\bar{u}_n$ satisfies (\ref{eqq.6}) and using the same arguments as in Lemma \ref{lemgc1} we obtain that $\bar{u}_n \to u_0$ in $X$. However, this is true only if $u_n \to 0$, in $X$. \vspace{0.2cm}

It remains now to leave $R \to \infty$ in order to obtain that $C_{\varepsilon_n} \to C_0$, in $\mathbb{R} \times X$, for any $R$. In order to prove that $C_0$ is unbounded
in $\mathbb{R} \times X$, we may use the sequences $\{(\lambda_n, u_n) \in C_{\varepsilon_n} \cap \partial B_R (\lambda_1,0)\}$, which converge to some $(\lambda_*,u_*)$, in
$\mathbb{R} \times X$, for any $R>0$. $\blacksquare$ \vspace{0.2cm}

Theorem \ref{thmgeneralcase} applies also to the case of equation (\ref{mcgeneral}); The functions $f = f(x,p,q_i)$, $g = g(x,p,q_i)$, $f,g: \mathbb{R}^{3+1+3} \to \mathbb{R}$ are smooth enough, at least $C^{6}$. Moreover, $f(x,0,0) = c$, $c$ is a positive real constant and
\[
\int g(x,u,\nabla u)\, u\, dx = o (||u||_{D^{1,2} (\mathbb{R}^3)}),\;\;\;\;\;\; \mbox{as}\;\;\; ||u||_{D^{1,2} (\mathbb{R}^3)} \to 0,
\]
for any $u \in X$. For example if $f = \sqrt{\delta + u^2 + c^2_1 |\nabla u|^2}$, $c_1$ real constant and $g = |\nabla u|^2/f$ we obtain $\delta$- approximations of the relativistic heat equation. Other application is $\varepsilon$- approximations for equations arising in transportation problems and in radiation hydrodynamics, see \cite{bren03}.

\medskip

{\sc N. B. Zographopoulos} \vspace{0.1cm}\\
University of Military Education, Hellenic Army Academy, Department of Mathematics \& Engineering Sciences, Vari - 16673, Athens Greece, \vspace{0.1cm} \\
e-mail: nzograp@gmail.com

\

{\bf Keywords:} \ {global bifurcation, mean curvature equations, non uniformly elliptic equations} \vspace{0.1cm}

\medskip

{\bf AMS Subject Classification (2010):} \  {53A10, 35A09, 35B40, 35B32, 35J62, 35J93}

\end{document}